\title[Log surfaces]{Minimal model theory for log 
surfaces in Fujiki's class $\mathcal C$}
\author{Osamu Fujino}
\date{2020/1/20, version 0.08}
\subjclass[2010]{Primary 14E30; Secondary 32J27}
\keywords{log surfaces, log canonical surfaces, 
Fujiki's class $\mathcal C$, 
minimal model program, abundance theorem, complete non-projective 
algebraic surfaces}
\address{Department of Mathematics, Graduate School of Science, 
Osaka University, Toyonaka, Osaka 560-0043, Japan}
\email{fujino@math.sci.osaka-u.ac.jp}
\DeclareMathOperator{\Supp}{Supp}
\DeclareMathOperator{\Exc}{Exc}
\DeclareMathOperator{\Pic}{Pic}
\DeclareMathOperator{\NE}{\overline{NE}}
\newtheorem{thm}{Theorem}[section]
\newtheorem{lem}[thm]{Lemma}
\newtheorem{conj}[thm]{Conjecture}
\newtheorem{cor}[thm]{Corollary}
\newtheorem{claim}{Claim}
\theoremstyle{definition}
\newtheorem{ex}[thm]{Example}
\newtheorem{defn}[thm]{Definition}
\newtheorem{rem}[thm]{Remark}
\newtheorem*{ack}{Acknowledgments}  
\newtheorem{step}{Step}
\newtheorem{question}[thm]{Question}
\begin{document}

\begin{abstract}
We establish the minimal model theory for 
$\mathbb Q$-factorial log surfaces and log canonical surfaces 
in Fujiki's class $\mathcal C$. 
\end{abstract}

\maketitle 
\tableofcontents

\section{Introduction}\label{f-sec1}

A log surface $(X, \Delta)$ in Fujiki's class $\mathcal C$ 
consists of a compact normal analytic 
surface $X$ that is 
bimeromorphically equivalent to a compact 
K\"ahler manifold and a $\mathbb Q$-divisor $\Delta$ on $X$ whose 
coefficients are in $[0, 1]\cap \mathbb Q$ such that 
$K_X+\Delta$ is $\mathbb Q$-Cartier, that is, 
there exists a positive integer $m$ such that 
$m\Delta$ is integral and $\left( \omega^{\otimes m}_X\otimes 
\mathcal O_X(m\Delta)\right)^{**}$ is locally free, where 
$\omega_X$ is the canonical sheaf of $X$. In this paper, 
we establish the following theorem, which is 
a generalization of the minimal model theory for projective 
$\mathbb Q$-factorial log surfaces obtained in \cite{fujino-surfaces} 
(for some 
related topics, see \cite{fujino-tanaka}, \cite{tanaka}, 
\cite{hashizume1}, \cite{liu}, \cite{miyamoto}, and 
\cite[Section 4.10]{fujino-foundations}). 

\begin{thm}[Minimal model theory for $\mathbb Q$-factorial log 
surfaces in Fujiki's class $\mathcal C$]\label{f-thm1.1}
Let $(X, \Delta)$ be a $\mathbb Q$-factorial 
log surface in Fujiki's class $\mathcal C$. 
Then we can construct a finite sequence of 
projective bimeromorphic morphisms starting from $(X, \Delta)$: 
$$
(X, \Delta)=:(X_0, \Delta_0)\overset{\varphi_0}{\longrightarrow} 
(X_1, \Delta_1) \overset{\varphi_1}{\longrightarrow} 
\cdots 
\overset{\varphi_{k-1}}{\longrightarrow} 
(X_k, \Delta_k)=:(X^*, \Delta^*)
$$ 
such that $(X_i ,\Delta_i)$, where 
$\Delta_i:={\varphi_{i-1}}_*\Delta_{i-1}$, 
is a $\mathbb Q$-factorial log surface in Fujiki's class $\mathcal C$ 
and that 
$\Exc(\varphi_i)=:C_i\simeq \mathbb P^1$ and 
$-(K_{X_i}+\Delta_i)\cdot C_i>0$ for 
every $i$. The final model $(X^*, \Delta^*)$ satisfies one of the 
following conditions. 
\begin{itemize}
\item[(i)] {\em{(Good minimal model).}}~$K_{X^*}+\Delta^*$ is semi-ample. 
\item[(ii)] {\em{(Mori fiber space).}}~There exists a surjective morphism  
$g\colon X^*\to W$ onto a normal projective variety $W$ with 
connected fibers such that 
$-(K_{X^*}+\Delta^*)$ is $g$-ample, $\dim W<2$, 
and the relative Picard number $\rho(X^*/W)$ is one.  
\end{itemize}
We note that 
\begin{itemize}
\item[(1)] if $X_{i_0}$ is projective for some $i_0$ then 
$X_i$ is automatically projective for every $i$, and 
\item[(2)] if $X_{i_0}$ has only rational singularities for some 
$i_0$ then all the singularities of $X_i$ are rational for every $i$. 
\end{itemize}
We note that the above sequence of 
contraction morphisms 
is nothing but the minimal model 
program for projective $\mathbb Q$-factorial 
log surfaces established in \cite{fujino-surfaces} when $X$ 
is projective and that 
$X$ is automatically projective when $\kappa (X, K_X+\Delta)=-\infty$ or 
$2$. 
\end{thm}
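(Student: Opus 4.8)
The plan is to run a minimal model program for $(X,\Delta)$ directly in the analytic category, contracting one $(K_X+\Delta)$-negative extremal ray at each step. The two technical pillars are a cone and contraction theorem and an abundance theorem for $\mathbb{Q}$-factorial log surfaces in Fujiki's class $\mathcal{C}$; these occupy the body of the paper, and I would first establish them and then assemble Theorem~\ref{f-thm1.1} from them. Throughout, the essential subtlety over \cite{fujino-surfaces} is that $X_i$ need not be projective, so the projective cone theorem and base-point-free theorem are not directly available and must be replaced by arguments on a Kähler resolution.

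Suppose $K_{X_i}+\Delta_i$ is not nef. Using the cone theorem, choose a $(K_{X_i}+\Delta_i)$-negative extremal ray; it is generated by the class of an irreducible curve $C_i$, and the sign of $C_i^2$ governs the step. If $C_i^2<0$, the intersection matrix is negative definite, Grauert's contractibility criterion yields a bimeromorphic morphism $\varphi_i\colon X_i\to X_{i+1}$ contracting $C_i$ to a point, and adjunction on a Kähler resolution of $X_i$ (which is itself Kähler since $X_i\in\mathcal{C}$), combined with the negativity, forces $C_i\simeq\mathbb{P}^1$; checking that $\varphi_i$ is a \emph{projective} morphism by exhibiting a $\varphi_i$-ample divisor gives $\Exc(\varphi_i)=C_i$ with $-(K_{X_i}+\Delta_i)\cdot C_i>0$ as required. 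If instead $C_i^2\ge 0$, the ray is of fiber type, and the associated linear system produces a surjective morphism $g\colon X_i\to W$ with $\dim W<2$, $\rho(X_i/W)=1$, and $-(K_{X_i}+\Delta_i)$ relatively ample, which is outcome (ii).

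It remains to see that the bimeromorphic contractions preserve the working category and that the process terminates. Membership in $\mathcal{C}$ is a bimeromorphic invariant, so each $X_{i+1}$ lies in $\mathcal{C}$; that $\Delta_{i+1}=(\varphi_i)_*\Delta_i$ again defines a log surface and that $\mathbb{Q}$-factoriality is preserved both follow from $\varphi_i$ contracting a single curve, which drops the Picard number (finite for surfaces in $\mathcal{C}$) by exactly one. This same drop gives termination: only finitely many $\varphi_i$ can occur before $K_{X^*}+\Delta^*$ becomes nef or a fiber-type ray appears. When $K_{X^*}+\Delta^*$ is nef, I would invoke the abundance theorem to promote nefness to semi-ampleness, which is outcome (i).

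The auxiliary assertions are propagation statements read off the projective bimeromorphic chain. For (1), projectivity passes forward (an extremal contraction of a projective surface is again projective) and backward (a surface admitting a projective morphism to a projective surface is projective), so projectivity of one $X_{i_0}$ forces it for all $i$; the same bidirectional argument, using the log canonical hypothesis to control the single contracted $\mathbb{P}^1$, gives (2). Finally, the $\kappa$-remark is separate: when $\kappa(X,K_X+\Delta)=2$ the divisor $K_X+\Delta$ is big, so $X$ is Moishezon and hence projective, while $\kappa=-\infty$ ends in a fiber-type contraction over a point, again forcing projectivity, leaving $\kappa\in\{0,1\}$ as the genuinely analytic cases. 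I expect the main obstacles to be exactly these two non-projective cases: proving the contraction theorem (that the Grauert contraction is projective and preserves both $\mathbb{Q}$-factoriality and membership in $\mathcal{C}$) and proving abundance for $\kappa=0,1$, where the projective base-point-free method is unavailable and one must argue through the numerical geometry of the Kähler resolution.
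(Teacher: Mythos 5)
Your plan rests on ``a cone and contraction theorem \dots for $\mathbb Q$-factorial log surfaces in Fujiki's class $\mathcal C$'' as one of the two technical pillars, and every step of your program begins by choosing a $(K_{X_i}+\Delta_i)$-negative extremal ray generated by an irreducible curve. This is the genuine gap: no such cone theorem is available for non-projective compact surfaces, and the paper deliberately avoids ever invoking one. For a non-projective $\mathbb Q$-factorial compact surface in class $\mathcal C$ the algebraic dimension is at most one (by Lemma \ref{f-lem4.1}), the duality between nef classes and the closed cone of curves breaks down, and the paper explicitly warns (Definition \ref{h-def2.4}, Section \ref{p-sec12}) that $\NE$ of a complete non-projective surface can degenerate to all of $N_1$; so ``extremal ray spanned by a curve'' has no content to start from. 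Your fiber-type branch has the same problem in reverse: you assert that a ray with $C_i^2\ge 0$ ``produces a surjective morphism via the associated linear system,'' which needs a basepoint-free theorem you have not supplied outside the projective category, and you then deduce projectivity of $X$ from the existence of the resulting Mori fiber space --- but you needed projectivity to build that contraction in the first place.

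The paper's route makes both difficulties disappear before any contraction is chosen. If $\kappa(X,K_X+\Delta)=-\infty$, the minimal resolution $Y$ has $\kappa(Y,K_Y)=-\infty$ and even first Betti number, so the Enriques--Kodaira classification forces $Y$, hence $X$, to be projective (Lemma \ref{f-lem4.3}); the entire Mori-fiber-space branch therefore takes place in the projective category, where \cite{fujino-surfaces} applies verbatim. If $\kappa(X,K_X+\Delta)\ge 0$, one fixes an effective $D\in |m(K_X+\Delta)|$ and observes that any curve $C$ with $(K_X+\Delta)\cdot C<0$ is automatically a component of $\Supp D$ with $C^2<0$; Sakai's criterion contracts it, the $\varphi$-ampleness of $-lC$ makes the contraction projective, the relative vanishing theorem gives $C\simeq\mathbb P^1$ and the preservation of rational singularities, Theorem \ref{f-thm3.11} preserves $\mathbb Q$-factoriality, and termination is immediate because $\Supp D$ has only finitely many components --- no extremal ray, no cone theorem, and no Picard-number count is needed. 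Your sketch of the $C_i^2<0$ step and of abundance is compatible with this, but as written the argument cannot get off the ground without first eliminating the non-projective case from the $\kappa=-\infty$ branch and replacing extremal-ray selection by the $\Supp D$ device (or something equivalent) in the $\kappa\ge 0$ branch.
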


Theorem \ref{f-thm1.1} is not difficult to check once we know the 
minimal model theory for projective 
$\mathbb Q$-factorial log surfaces in \cite{fujino-surfaces}, 
the Enriques--Kodaira classification of compact complex surfaces 
(see \cite[Chapter VI]{bhpv}), and some basic results 
on complex analytic spaces. 
We note that Theorem \ref{f-thm1.1} includes the abundance 
theorem for $\mathbb Q$-factorial log surfaces 
in Fujiki's class $\mathcal C$. 

\begin{thm}[Abundance theorem for $\mathbb Q$-factorial 
log surfaces in Fujiki's class $\mathcal C$, 
see Theorem \ref{h-thm7.2}]\label{f-thm1.2}
Let $(X, \Delta)$ be a $\mathbb Q$-factorial log surface 
in Fujiki's class $\mathcal C$. 
Assume that $(K_X+\Delta)\cdot C\geq 0$ for 
every curve $C$ on $X$. 
Then $K_X+\Delta$ is semi-ample. 
\end{thm}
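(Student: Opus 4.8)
The plan is to prove the abundance theorem (Theorem~\ref{f-thm1.2}) by reducing the Fujiki class $\mathcal{C}$ case to the known projective case. The key dichotomy is whether $X$ is projective or not. When $X$ is projective, the statement is exactly the abundance theorem for projective $\mathbb{Q}$-factorial log surfaces already established in \cite{fujino-surfaces}, so there is nothing new to prove. Thus the entire content lies in handling the genuinely non-projective members of Fujiki's class $\mathcal{C}$, and the first step is to use the Kodaira dimension $\kappa(X, K_X+\Delta)$ to organize the argument.

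The second step is to invoke the structural remark made after Theorem~\ref{f-thm1.1}: $X$ is automatically projective whenever $\kappa(X, K_X+\Delta)=-\infty$ or $\kappa(X, K_X+\Delta)=2$. Under the nefness hypothesis $(K_X+\Delta)\cdot C\geq 0$ for every curve $C$, the case $\kappa=-\infty$ cannot occur for a nef divisor on a surface except in degenerate situations, and in the case $\kappa=2$ the divisor $K_X+\Delta$ is nef and big, so semi-ampleness follows from the projective theory together with the base-point-free theorem. Therefore I would reduce to the two remaining intermediate cases $\kappa(X, K_X+\Delta)=0$ and $\kappa(X, K_X+\Delta)=1$, where $X$ may genuinely fail to be projective.

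The third step, and the heart of the matter, is the case $\kappa(X, K_X+\Delta)=1$, where I expect the main obstacle to lie. Here I would run the minimal model program furnished by Theorem~\ref{f-thm1.1} to reach a final model $(X^*, \Delta^*)$; since $K_X+\Delta$ is already nef, the model is a minimal model rather than a Mori fiber space, so alternative (i) applies and $K_{X^*}+\Delta^*$ is semi-ample. The remaining task is to descend semi-ampleness back along the bimeromorphic contractions $\varphi_i$ to the original $X$. Because each $\varphi_i$ is a projective bimeromorphic morphism contracting a single curve $C_i\simeq\mathbb{P}^1$ with $-(K_{X_i}+\Delta_i)\cdot C_i>0$, and since $K_X+\Delta$ is nef on $X$, one checks that $K_X+\Delta$ is the pullback of $K_{X^*}+\Delta^*$; semi-ampleness is preserved under pullback by a proper morphism, giving semi-ampleness of $K_X+\Delta$ on $X$. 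The delicate point is verifying that the relevant linear system descends in the analytic category, which is where the Enriques--Kodaira classification and basic results on complex analytic spaces (finiteness of the relevant cohomology, behavior of the Iitaka fibration) are used to produce the required fibration structure.

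The fourth step disposes of the case $\kappa(X, K_X+\Delta)=0$, which is handled analogously but is more subtle because $K_X+\Delta$ is nef with numerical dimension zero; here I would argue that $K_X+\Delta$ is numerically trivial and then torsion, so that it is semi-ample with $\mathcal{O}_X(m(K_X+\Delta))\simeq\mathcal{O}_X$ for some $m$. The overall strategy, then, is that projectivity handles the extremal Kodaira dimensions for free, while the intermediate cases are reduced via the minimal model program of Theorem~\ref{f-thm1.1} to a minimal model on which abundance holds by the projective theory of \cite{fujino-surfaces}, with the analytic descent of semi-ampleness being the only genuinely new ingredient.
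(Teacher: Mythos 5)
Your case division by $\kappa(X,K_X+\Delta)$ matches the paper's, but the two central cases are not actually proved, and one step is circular. The circularity: you invoke alternative (i) of Theorem \ref{f-thm1.1} to conclude that the minimal model has semi-ample log canonical divisor. But the semi-ampleness assertion in Theorem \ref{f-thm1.1} is exactly the abundance theorem being proved here --- the paper explicitly establishes Theorem \ref{f-thm1.1} in Section \ref{f-sec5} \emph{except} for the semi-ampleness of $K_{X^*}+\Delta^*$, which is deferred to Theorem \ref{h-thm7.2}. Moreover, under the hypothesis $(K_X+\Delta)\cdot C\geq 0$ for every curve, $X$ is already its own final model, so there are no contractions to descend along; the ``descent of semi-ampleness'' step is vacuous and buys nothing. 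The paper's actual argument for $\kappa=1$ is direct and elementary: a nef line bundle with $\kappa=1$ on a compact normal analytic surface is semi-ample by Zariski's lemma (Lemma \ref{h-lem7.3}, following Fujita). No MMP is involved.

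Two further gaps. First, the case $\kappa=-\infty$ cannot be waved away: nef divisors on surfaces can have $\kappa=-\infty$ in general, and ruling this out under the hypotheses is the non-vanishing theorem (Theorem \ref{h-thm7.1}), whose proof passes to the minimal resolution $Y$, uses the Enriques--Kodaira classification to show $Y$ (hence $X$, by $\mathbb Q$-factoriality and Lemma \ref{f-lem4.1}) is projective when $\kappa(Y,K_Y)=-\infty$, and then quotes the projective non-vanishing theorem. Second, in the case $\kappa=0$ the assertion that $K_X+\Delta$ is ``numerically trivial and then torsion'' is the entire content of the theorem, not a step one can assume. The paper proves it by adapting the proof of Theorem 6.2 of \cite{fujino-surfaces}: one reduces to a smooth model $S$ with no $(-1)$-curves carrying an effective divisor $Z\in |m(K_S+\Delta_S)|$ with $Z^2=0$ (via Mumford's theory of indecomposable curves of canonical type), observes that a non-projective such $S$ in Fujiki's class $\mathcal C$ must be a $K3$ surface or a complex torus by the Enriques--Kodaira classification, and then derives a contradiction from $Z\neq 0$ using Lemmas \ref{h-lem7.5} and \ref{h-lem7.6}, which show that such an effective divisor would force $\kappa\geq 1$. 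None of this machinery appears in your outline, so the two cases where $X$ may genuinely fail to be projective are left unproved.
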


From the minimal model theoretic viewpoint, it is very natural 
to treat log canonical 
surfaces $(X, \Delta)$ in Fujiki's class $\mathcal C$. 
Unfortunately, $X$ is not necessarily $\mathbb Q$-factorial in this case. 
So we can not directly apply Theorem \ref{f-thm1.1} 
to log canonical surfaces in Fujiki's class $\mathcal C$. 
In order to establish the minimal model theory 
for log canonical surfaces in Fujiki's class $\mathcal C$, 
we prove the following theorem. 

\begin{thm}[Projectivity of log canonical surfaces in Fujiki's 
class $\mathcal C$ with negative Kodaira dimension, 
see Theorem \ref{f-thm9.1}]\label{f-thm1.3}
Let $(X, \Delta)$ be a log canonical surface in Fujiki's class $\mathcal C$. 
Assume that $\kappa (X, K_X+\Delta)=-\infty$ holds. 
Then $X$ is projective. 
\end{thm}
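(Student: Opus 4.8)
The plan is to reduce the projectivity of $X$ to the Enriques--Kodaira classification of the smooth surfaces lying above it, the key point being that the hypothesis $\kappa(X, K_X+\Delta)=-\infty$ propagates to the plain canonical divisor of a resolution. First I would take a projective log resolution $f\colon Y\to X$ of the pair $(X,\Delta)$ and write
\[
K_Y+\Delta_Y=f^*(K_X+\Delta),
\]
where $\Delta_Y=f^{-1}_*\Delta-\sum_i a(E_i; X,\Delta)\,E_i$, the sum running over the $f$-exceptional prime divisors $E_i$. I decompose $\Delta_Y=P-N$ into its effective positive part $P\geq 0$ and its negative part $N\geq 0$. Because $\Delta\geq 0$, the strict transform $f^{-1}_*\Delta$ is effective and contributes only to $P$; hence the negative part $N$ is supported on those $E_i$ with positive discrepancy, so $N$ is effective and \emph{$f$-exceptional}.

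Next I would run the Kodaira-dimension bookkeeping. Since $N$ is effective and $f$-exceptional and $K_Y+\Delta_Y=f^*(K_X+\Delta)$ is a pullback, the identity $f_*\mathcal O_Y\bigl(m f^*(K_X+\Delta)+mN\bigr)=\mathcal O_X(m(K_X+\Delta))$ shows that adding $N$ leaves the Kodaira dimension unchanged, and using $P\geq 0$ in the first inequality I get
\[
\kappa(Y,K_Y)\leq \kappa(Y,K_Y+P)=\kappa\bigl(Y, f^*(K_X+\Delta)+N\bigr)=\kappa(X,K_X+\Delta)=-\infty.
\]
Thus $\kappa(Y,K_Y)=-\infty$.

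It then remains to classify and descend. The surface $Y$ is smooth, compact, and bimeromorphic to $X$, hence again in Fujiki's class $\mathcal C$; for compact complex surfaces this class coincides with the K\"ahler ones (equivalently, $b_1$ is even), so $Y$ is K\"ahler and the non-K\"ahler class VII surfaces are excluded. By the Enriques--Kodaira classification (see \cite[Chapter VI]{bhpv}), a compact K\"ahler surface with $\kappa=-\infty$ is rational or ruled, hence projective. Finally, $X$ is bimeromorphic to the projective surface $Y$, so $X$ is a normal compact Moishezon surface; since a normal Moishezon surface is projective, $X$ is projective.

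The discrepancy bookkeeping in the first two steps is routine, so the essential inputs are the two classification-type facts: that a smooth surface in class $\mathcal C$ is K\"ahler (which, through the classification, forces projectivity already at the level of the resolution), and that a normal Moishezon surface is projective. The step demanding the most care is this last descent: one must verify that contracting the negative-definite exceptional configuration of $f$ keeps us inside the projective category, i.e.\ that projectivity of the smooth model $Y$ genuinely passes to the singular surface $X$.
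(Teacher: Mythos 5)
Your argument is correct up to the point where you conclude that the smooth model $Y$ is projective and hence that $X$ is Moishezon; this matches Step 1 of the paper's proof (the paper uses the minimal resolution, so that $\Delta_Y$ is automatically effective by the negativity lemma, but your $P$--$N$ bookkeeping reaches the same conclusion). The fatal gap is the final sentence: the claim that \emph{a normal Moishezon surface is projective} is false. Nagata's classical examples, and the paper's own Section \ref{p-sec12}, exhibit complete non-projective normal algebraic surfaces; Example \ref{p-ex12.3} is even log canonical with $\Pic(S)=\{0\}$ and $K_S\sim 0$. So the descent of projectivity from $Y$ to $X$ --- which you correctly flag as ``the step demanding the most care'' --- is not a routine verification but is precisely the entire content of the theorem, and you have not supplied it.

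What makes the descent work is the following. If $X$ has only rational singularities, then $X$ is $\mathbb Q$-factorial (Lemma \ref{f-lem3.10}) and a $\mathbb Q$-factorial Moishezon surface is projective by Nakai--Moishezon for algebraic spaces (Lemma \ref{f-lem4.1}); this is the easy case. But a log canonical surface may have non-rational singularities (simple elliptic or cusp points, by the classification of two-dimensional log canonical singularities), and there $\mathbb Q$-factoriality fails. The paper handles this case by first contracting all $(K_X+\Delta)$-negative curves of negative self-intersection (Theorem \ref{f-thm8.4}), passing to the minimal resolution $Z$ of the non-rational points, and analyzing the $(K_Z+\Delta_Z)$-negative extremal rays forced to exist by $\kappa=-\infty$: one shows every such ray is spanned by a curve of non-negative square, rules out the case of positive square, and in the remaining case obtains a Mori fiber space structure $Z\to W$ which forces $\Exc(g)$ to be a single elliptic curve and $-K_X$ to be ample. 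The hypothesis $\kappa(X,K_X+\Delta)=-\infty$ is what excludes the Nagata-type counterexamples (which all have $\kappa\geq 0$), and your proposal never uses it beyond the resolution level. You need to supply an argument of this kind, or at minimum restrict to the rational-singularity case and prove the non-rational case separately.
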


The proof of Theorem \ref{f-thm1.3} is much more difficult 
than we expected. We prove it with the aid of 
the classification of two-dimensional log canonical singularities. 
We note that there are non-projective normal 
complete rational surfaces (see \cite[Section 4]{nagata}). 
Fortunately, such surfaces do not appear under the assumption of 
Theorem \ref{f-thm1.3}. 
Since Nagata's example in \cite[Section 4]{nagata} 
is not log canonical, we explicitly construct some examples 
of complete non-projective log canonical algebraic surfaces in 
Section \ref{p-sec12} for the reader's convenience. Our construction, 
which was suggested by Kento Fujita,  
is arguably 
simpler than Nagata's original 
and classical one (see \cite[Section 4]{nagata}).
Here, we explain the most interesting example. 

\begin{ex}[see Example \ref{p-ex12.3}]\label{f-ex1.4} 
There exists a complete non-projective 
log canonical algebraic surface $S$ with $\Pic (S)=\{0\}$ and 
$K_S\sim 0$. 
In particular, $\kappa (S, K_S)=0$ holds. 
\end{ex}

For the details of Example \ref{f-ex1.4} and 
some other examples of complete non-projective 
algebraic surfaces, see Section \ref{p-sec12}, 
where the reader can find some examples of complete non-projective 
log canonical algebraic surfaces $S$ with 
$\Pic(S)=\{0\}$, $\NE(S)=\mathbb R_{\geq 0}$, or $\NE(S)=N_1(S)$. 

\medskip 

Thus, by using Theorem \ref{f-thm1.3}, 
we have the following minimal model theory 
for log canonical surfaces in Fujiki's class $\mathcal C$. 

\begin{thm}[Minimal model theory for log canonical 
surfaces in Fujiki's class $\mathcal C$]\label{f-thm1.5}
Let $(X, \Delta)$ be a log canonical 
surface in Fujiki's class $\mathcal C$. 
Then we can construct a finite sequence of 
projective bimeromorphic morphisms starting from $(X, \Delta)$: 
$$
(X, \Delta)=:(X_0, \Delta_0)\overset{\varphi_0}{\longrightarrow} 
(X_1, \Delta_1) \overset{\varphi_1}{\longrightarrow} 
\cdots 
\overset{\varphi_{k-1}}{\longrightarrow} 
(X_k, \Delta_k)=:(X^*, \Delta^*)
$$ 
such that $(X_i ,\Delta_i)$, where 
$\Delta_i:={\varphi_{i-1}}_*\Delta_{i-1}$, 
is a log canonical surface in Fujiki's class $\mathcal C$ and that 
$\Exc(\varphi_i)=:C_i\simeq \mathbb P^1$ and 
$-(K_{X_i}+\Delta_i)\cdot C_i>0$ for 
every $i$. The final model $(X^*, \Delta^*)$ satisfies 
one of the following conditions. 
\begin{itemize}
\item[(i)] {\em{(Good minimal model).}}~$K_{X^*}+\Delta^*$ is semi-ample. 
\item[(ii)] {\em{(Mori fiber space).}}~There exists a surjective morphism  
$g\colon X^*\to W$ onto a normal projective variety $W$ with 
connected fibers such that 
$-(K_{X^*}+\Delta^*)$ is $g$-ample, $\dim W<2$, 
and the relative Picard number $\rho(X^*/W)$ is one.  
\end{itemize}
We note that 
\begin{itemize}
\item[(1)] if $X_{i_0}$ is projective for some $i_0$ then 
$X_i$ is automatically projective for every $i$, 
\item[(2)] if $X_{i_0}$ has only rational singularities for some 
$i_0$ then all the singularities of $X_i$ are rational for every $i$, and 
\item[(3)] if $X_{i_0}$ is $\mathbb Q$-factorial for some 
$i_0$ then so is $X_i$ for every $i$. 
\end{itemize}
We note that the above sequence of 
contraction morphisms 
is nothing but the usual minimal model 
program for projective log canonical 
surfaces $($see \cite{fujino-surfaces}$)$ when $X$ 
is projective and that $X$ is automatically projective 
when $\kappa(X, K_X+\Delta)=-\infty$ by 
Theorem \ref{f-thm1.3}. 
\end{thm}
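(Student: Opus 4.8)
The plan is to split the argument according to the value of $\kappa(X, K_X+\Delta)$ and to reduce the non-$\mathbb Q$-factorial situation to the $\mathbb Q$-factorial theorems via the minimal resolution. Note first that the two possible outcomes are governed by the Kodaira dimension: a good minimal model has $K_{X^*}+\Delta^*$ semi-ample, hence $\kappa\geq 0$, while a Mori fiber space has $-(K_{X^*}+\Delta^*)$ relatively ample, forcing $K_{X^*}+\Delta^*$ to be non-pseudoeffective and $\kappa=-\infty$. I would therefore treat the case $\kappa(X, K_X+\Delta)=-\infty$ first: by Theorem \ref{f-thm1.3} the surface $X$ is projective, and the minimal model program for projective log canonical surfaces of \cite{fujino-surfaces} applies directly, terminating in a Mori fiber space as in (ii) with every intermediate surface projective.

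The substantial case is $\kappa(X, K_X+\Delta)\geq 0$, in which $X$ may fail to be $\mathbb Q$-factorial; since a normal surface admits no nontrivial small modification, the usual small $\mathbb Q$-factorialization is unavailable, and I would instead use the minimal resolution $f\colon V\to X$. Writing $K_V+\Delta_V=f^*(K_X+\Delta)$, the nonpositivity of the discrepancies of the minimal resolution together with log canonicity of $(X,\Delta)$ makes $\Delta_V$ effective with coefficients in $[0,1]$, so $(V,\Delta_V)$ is a $\mathbb Q$-factorial log canonical surface in Fujiki's class $\mathcal C$ with $\kappa(V, K_V+\Delta_V)=\kappa(X, K_X+\Delta)\geq 0$. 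The cone theorem on $V$ behind Theorem \ref{f-thm1.1}, combined with the projection formula $(K_V+\Delta_V)\cdot\gamma=(K_X+\Delta)\cdot f_*\gamma$, yields the cone theorem on $X$: every $(K_X+\Delta)$-negative extremal ray is spanned by a non-$f$-exceptional rational curve $C\simeq\mathbb P^1$ of negative self-intersection. By Grauert's criterion $C$ contracts to a normal surface through a projective bimeromorphic morphism $\varphi\colon X\to X'$, and one checks that $(X',\Delta'=\varphi_*\Delta)$ is again log canonical with $K_{X'}+\Delta'$ being $\mathbb Q$-Cartier, lies in class $\mathcal C$ since it is bimeromorphic to $X$, and satisfies $K_X+\Delta=\varphi^*(K_{X'}+\Delta')+aC$ with $a>0$, so that $\kappa$ is preserved. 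Because $b_2$ strictly drops under each such contraction, the program terminates after finitely many steps.

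At the terminal model $(X^*,\Delta^*)$ we have $K_{X^*}+\Delta^*$ nef, and to upgrade this to semi-ampleness I would apply the minimal resolution construction once more: let $h\colon W\to X^*$ be the minimal resolution with $K_W+\Delta_W=h^*(K_{X^*}+\Delta^*)$, a nef class on the $\mathbb Q$-factorial log canonical surface $(W,\Delta_W)$ in class $\mathcal C$. Theorem \ref{f-thm1.2} makes $K_W+\Delta_W$ semi-ample, and since $h_*\mathcal O_W=\mathcal O_{X^*}$ the morphism defined by the resulting free linear system factors through $h$, so $K_{X^*}+\Delta^*$ is semi-ample, giving outcome (i). The supplementary assertions follow by tracking invariants across each contraction of a $\mathbb P^1$: projectivity propagates in both directions because blowing down or up a single negative curve preserves it, rationality of singularities is preserved since contracting a smooth rational curve introduces only a rational singularity and partial resolutions of rational singularities are rational, and $\mathbb Q$-factoriality propagates with the forward direction being exactly Theorem \ref{f-thm1.1}.

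The main obstacle, I expect, is the contraction step in the non-$\mathbb Q$-factorial analytic category: one must show that contracting a $(K_X+\Delta)$-negative extremal $\mathbb P^1$ on a merely log canonical surface produces a surface on which $K_{X'}+\Delta'$ remains $\mathbb Q$-Cartier and log canonical, carrying this out analytically via Grauert rather than by algebraic extremal-ray technology. Equally delicate is controlling the comparison between $\NE(X)$ and $\NE(V)$ precisely enough to guarantee that each negative extremal ray on $X$ is represented by a genuinely contractible curve and that $f$-exceptional classes do not obstruct the descent; once these points are secured, every remaining step reduces cleanly to Theorems \ref{f-thm1.1} and \ref{f-thm1.2} through the minimal resolution.
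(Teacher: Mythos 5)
Your treatment of the case $\kappa(X,K_X+\Delta)=-\infty$ (projectivity via Theorem \ref{f-thm1.3}, then the projective MMP of \cite{fujino-surfaces}) and your final abundance step (pass to the minimal resolution and apply Theorem \ref{f-thm1.2}) coincide with the paper, which packages the latter as Corollary \ref{h-cor7.7}. The problem is the engine you propose for the case $\kappa\geq 0$ with $X$ non-projective. You want a ``cone theorem on $X$'' descended from the minimal resolution $V$, with contractions dictated by $(K_X+\Delta)$-negative extremal rays. But there is no cone theorem behind Theorem \ref{f-thm1.1} in the non-projective case: its mechanism (Theorem \ref{f-thm5.1}) is to fix $D\in |m(K_X+\Delta)|$ and contract negative components of $\Supp D$ via Sakai's theorem. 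Worse, for non-projective $X$ the cone $\NE(X)$ is either not considered at all (the paper restricts $N_1$ to the Moishezon case, Definition \ref{h-def2.4}) or is pathological --- Section \ref{p-sec12} exhibits complete surfaces with $\NE(S)=N_1(S)$, where every ray is ``extremal'' and none detects a contractible curve. The paper's actual argument for Theorem \ref{f-thm1.5} sidesteps all of this: non-projectivity forces $\kappa\geq 0$ by Theorem \ref{f-thm9.1}, one takes $D\in|m(K_X+\Delta)|$, observes that any curve $C$ with $(K_X+\Delta)\cdot C<0$ is a component of $\Supp D$ with $C^2<0$ in the sense of Mumford, contracts it by Theorem \ref{f-thm8.4}, and terminates because $\Supp D$ has finitely many components (your $b_2$-drop argument for singular normal surfaces is, by contrast, not justified).

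The second, and decisive, gap is one you flag yourself as ``the main obstacle'' but do not close: the content of Theorem \ref{f-thm8.4}. The theorem as stated requires each $\varphi_i$ to be \emph{projective} with $C_i\simeq\mathbb P^1$, and claim (3) requires $C_i$ to be $\mathbb Q$-Cartier so that Theorem \ref{f-thm3.11} applies; none of this is automatic when $X_i$ is not $\mathbb Q$-factorial. The paper obtains it by showing, via the negativity lemma, that the image point of the contraction is numerically dlt --- hence $K_{X_{i+1}}+\Delta_{i+1}$ is $\mathbb Q$-Cartier and the target has a rational singularity there --- and then pulling this back through the relative vanishing theorem ($R^1{\varphi_i}_*\mathcal O_{X_i}=0$, Theorem \ref{f-thm11.3}) and Lemma \ref{h-lem2.8} to conclude that $C_i$ avoids the non-rational singular points of $X_i$ and is therefore $\mathbb Q$-Cartier, that $-C_i$ is $\varphi_i$-ample (so $\varphi_i$ is projective), and that $C_i\simeq\mathbb P^1$. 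The same vanishing is what proves claim (2); your heuristic that ``contracting a smooth rational curve introduces only a rational singularity'' ignores that $C_i$ may pass through singular points of $X_i$. Without supplying this contraction theorem, the proposal establishes neither the structure of the $\varphi_i$ asserted in the statement nor claims (2) and (3).
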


In a series of papers (see \cite{horing-peternell1}, \cite{horing-peternell2}, 
and \cite{campana-horing-peternell}), Campana, H\"oring, and 
Peternell established the minimal model program 
and the abundance theorem for K\"ahler threefolds (see 
also \cite{horing-peternell3}). Their approach is essentially analytic. 
On the other hand, our approach is much more elementary than 
theirs and is not analytic. 
Although we mainly treat compact analytic surfaces in Fujiki's class 
$\mathcal C$, we do not discuss K\"ahler forms (or currents) 
on singular surfaces (see \cite{fujiki}). 

\medskip 

In Section \ref{f-sec11}, which is an appendix, we treat 
some 
vanishing theorems for proper bimeromorphic morphisms between 
analytic surfaces. They play an important role in this paper. 
Although they are more or less known to the experts, 
we explain the details for the reader's convenience 
because we can find no suitable references. 
We think that the results are useful for other applications. 
The most useful formulation is Theorem \ref{f-thm11.3} (2). 

\begin{thm}[see Theorem \ref{f-thm11.3}]\label{f-thm1.6}
Let $X$ be a normal analytic surface and 
let $\Delta$ be an effective $\mathbb Q$-divisor on $X$ 
whose coefficients are less than one such that 
$K_X+\Delta$ is $\mathbb Q$-Cartier. 
Let $f\colon X\to Y$ be 
a proper bimeromorphic morphism onto a normal 
analytic surface $Y$. 
Let $\mathcal L$ be a line bundle on $X$ and let $D$ 
be a $\mathbb Q$-Cartier Weil divisor on $X$. 
Assume that 
$\mathcal L\cdot C+(D-(K_X+\Delta))\cdot C\geq 0$ for every 
$f$-exceptional curve $C$ on $X$. 
Then $R^if_*(\mathcal L\otimes \mathcal O_X(D))=0$ holds 
for every $i>0$. 
\end{thm}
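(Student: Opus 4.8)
The plan is to reduce the statement to a relative Kawamata--Viehweg vanishing on a smooth surface, where the analytic vanishing machinery is available. Since $f$ is bimeromorphic between surfaces, every fiber has dimension at most one, so $R^if_*=0$ automatically for $i\geq 2$ and only $i=1$ requires proof. The assertion is local on $Y$, so I would fix a point $y\in Y$ and replace $Y$ by a small relatively compact Stein neighborhood of $y$; over the locus where $f$ is an isomorphism there is nothing to prove, so $R^1f_*(\mathcal L\otimes \mathcal O_X(D))$ is supported on the finite set $f(\Exc(f))$. On such a contractible Stein base $\Pic(Y)=0$, and hence $\mathcal L$ is of the form $\mathcal O_X(L)$ for a Cartier divisor $L$ supported on the exceptional fibers; being invertible, $\mathcal O_X(L)$ satisfies $\mathcal L\otimes \mathcal O_X(D)\cong \mathcal O_X(L+D)$. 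Replacing $D$ by the $\mathbb Q$-Cartier Weil divisor $L+D$, I am reduced to proving $R^1f_*\mathcal O_X(D)=0$ under the hypothesis that $D-(K_X+\Delta)$ is $f$-nef.

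Next I would pass to a log resolution $\pi\colon Z\to X$ of $(X,\Delta+D)$ with $Z$ smooth, and set $h:=f\circ \pi\colon Z\to Y$, chosen so that $\Exc(\pi)\cup \Supp(\pi^{-1}_*(\Delta+D))$ is simple normal crossing. The key clean observation is that the $\mathbb Q$-divisor $P:=\pi^*(D-(K_X+\Delta))$ is $h$-nef: the $h$-exceptional curves are precisely the $\pi$-exceptional curves together with the strict transforms of the $f$-exceptional curves, and $P$ meets the former trivially (as $\pi_*$ kills them) and the latter nonnegatively (this is exactly the hypothesis, since $\pi_*$ carries a strict transform back to the corresponding $f$-exceptional curve). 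Setting $\mathcal G:=\mathcal O_Z(\lceil \pi^*D\rceil)$, the standard identity $\pi_*\mathcal O_Z(\lceil \pi^*D\rceil)=\mathcal O_X(D)$ for an integral $\mathbb Q$-Cartier Weil divisor gives $\pi_*\mathcal G=\mathcal O_X(D)$. The low-degree exact sequence of the Leray spectral sequence for $h=f\circ \pi$, combined with $R^2f_*\mathcal O_X(D)=0$, then yields an injection $R^1f_*\mathcal O_X(D)\hookrightarrow R^1h_*\mathcal G$. Hence it suffices to prove $R^1h_*\mathcal G=0$ on the smooth surface $Z$.

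The final step is relative Kawamata--Viehweg vanishing on the smooth surface $Z$ over the Stein base $Y$: writing $\mathcal G=\mathcal O_Z(K_Z+\lceil N\rceil)$, I would exhibit $N$ as the sum of the $h$-nef divisor $P$, an exceptional correction, and a simple normal crossing boundary with coefficients in $[0,1)$, and then invoke the analytic vanishing theorem for $h$-nef $\mathbb Q$-divisors with simple normal crossing fractional part. Here the coefficients-less-than-one hypothesis on $\Delta$ is exactly what guarantees that the strict transform $\pi^{-1}_*\Delta$ contributes a legitimate klt boundary on $Z$, while the uncontrolled discrepancies coming from the (possibly non-log-canonical) singularities of $X$ are absorbed into an effective exceptional divisor.

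The main obstacle is this last step, and it has two facets. First, one must arrange the auxiliary line bundle on $Z$ so that, simultaneously, its pushforward remains $\mathcal O_X(D)$ and its twist by $-K_Z$ is the round-up of an honestly $h$-nef $\mathbb Q$-divisor; since $\lceil \pi^*D\rceil-K_Z$ differs from $P$ by an exceptional divisor that need not be $h$-nef, this forces one to add a suitable exceptional divisor, and the existence of the right such divisor is governed by the negative-definiteness of the intersection matrix of the $h$-exceptional curves (Grauert's criterion in the analytic setting). Second, one needs the relative Kawamata--Viehweg vanishing itself over a Stein base, which in the analytic category is established through the theorem on formal functions together with Grauert--Riemenschneider-type vanishing; this is precisely the input for which, as remarked in the introduction, no convenient reference exists, so it must be set up carefully in advance.
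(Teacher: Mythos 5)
Your skeleton (localize over a Stein germ of $Y$, pass to a resolution $\pi\colon Z\to X$, compare via the Leray spectral sequence for $h=f\circ\pi$, and conclude by a relative vanishing theorem on the smooth surface proved through the theorem on formal functions) is the same as the paper's, which uses the minimal resolution and Theorem \ref{f-thm11.1}. But there is a genuine gap at the decisive step. The Leray injection $R^1f_*(\mathcal O_X(D))\hookrightarrow R^1h_*\mathcal G$ with $\mathcal G=\mathcal O_Z(\lceil \pi^*D\rceil)$ is correct, but the target need not vanish, so reducing to ``$R^1h_*\mathcal G=0$'' replaces the theorem by a strictly stronger and in general false statement. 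Indeed, the same five-term sequence together with $R^2f_*(\mathcal O_X(D))=0$ gives a surjection $R^1h_*\mathcal G\to f_*R^1\pi_*\mathcal G\to 0$, and $R^1\pi_*\mathcal G$ can be nonzero over a point of $\Exc(f)$: nothing in the hypotheses excludes a non-rational (say simple elliptic) singular point of $X$ lying on an $f$-exceptional curve, and with $\Delta=0$, $D=0$, $\mathcal L=\mathcal O_X$ one then has $R^1\pi_*\mathcal O_Z\ne 0$ there while the theorem still asserts $R^1f_*\mathcal O_X=0$. The source of the trouble is exactly the one you flag: the discrepancy divisor $K_Z-\pi^*(K_X+\Delta)$ can have coefficients $\leq -1$, so the exceptional part of $\lceil\pi^*D\rceil-K_Z-\pi^*(D-(K_X+\Delta))$ can have coefficients $\geq 1$, and no \emph{effective} exceptional correction $F$ can bring it back into $[0,1)$ --- adding only makes it worse. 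One must \emph{subtract} the integral part of the exceptional boundary (in the paper's notation, pass to $\lceil\varphi^*D\rceil-\lfloor\Theta\rfloor$ with $\Theta=\Delta_Z+\{-\varphi^*D\}$). But then $\pi_*$ of the modified sheaf is a proper subsheaf of $\mathcal O_X(D)$ and your Leray injection no longer targets $R^1f_*(\mathcal L\otimes\mathcal O_X(D))$. The missing idea is the transfer step: the quotient $\mathcal O_X(D)/\varphi_*\mathcal O_Z(\lceil\varphi^*D\rceil-\lfloor\Theta\rfloor)$ is a skyscraper sheaf because $\lfloor\Theta\rfloor$ is effective and exceptional, so $R^1f_*$ of the subsheaf still surjects onto $R^1f_*(\mathcal L\otimes\mathcal O_X(D))$; combining this surjection with the Leray injection into the vanishing $R^1(f\circ\varphi)_*$ is what closes the proof in Theorem \ref{f-thm11.3}. (The paper also takes the \emph{minimal} resolution so that the crepant boundary $\Delta_Z$ is automatically effective, which your log resolution does not guarantee.)

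A secondary problem: the reduction ``$\Pic(Y)=0$ on the Stein germ, hence $\mathcal L\cong\mathcal O_X(L)$ with $L$ supported on the exceptional fibres'' is unjustified. Triviality of $\Pic(Y)$ says nothing about $\Pic(X)$, and when $R^1f_*\mathcal O_X\ne 0$ the Picard group of $X$ over the germ has a positive-dimensional part, so a line bundle on $X$ need not be of the form $\mathcal O_X(L)$ with $L$ exceptional. This reduction is also unnecessary: the vanishing input (Theorem \ref{f-thm11.1} together with Remark \ref{f-rem11.2}) only uses the intersection numbers of $\pi^*\mathcal L$ with the exceptional curves, so one simply carries $\pi^*\mathcal L$ along, as the paper does.
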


We explain the organization of this paper. 
In Section \ref{h-sec2}, we collect some basic definitions and 
results. 
In Section \ref{f-sec3}, 
we explain a very easy version of the basepoint-free theorem 
for projective bimeromorphic morphisms between surfaces 
(see Theorem \ref{f-thm3.11}). 
In Section \ref{f-sec4}, we collect some useful projectivity criteria for 
$\mathbb Q$-factorial compact analytic 
surfaces. In Section \ref{f-sec5}, we discuss the minimal model 
program for $\mathbb Q$-factorial log 
surfaces based on Sakai's 
contraction theorem, which is a slight generalization of 
Grauert's famous contraction theorem. 
Then we prove Theorem \ref{f-thm1.1} 
except for the semi-ampleness of 
$K_{X^*}+\Delta^*$. 
In Section \ref{f-sec6}, we briefly discuss the finite generation 
of log canonical rings of $\mathbb Q$-factorial log surfaces, 
which is essentially contained in \cite{fujino-surfaces}, and 
some related topics. 
In Section \ref{h-sec7}, we prove the non-vanishing theorem 
and 
the abundance theorem. Precisely speaking, 
we explain how to modify the arguments 
in \cite{fujino-surfaces} 
for $\mathbb Q$-factorial log surfaces in Fujiki's 
class $\mathcal C$. 
In Section \ref{f-sec8}, we discuss a contraction theorem for 
log canonical surfaces. 
A key point is that the exceptional curve automatically 
becomes $\mathbb Q$-Cartier. 
This simple fact plays a crucial role in our minimal model 
theory for log canonical surfaces. 
Section \ref{f-sec9} is devoted to the proof of 
Theorem \ref{f-thm1.3}, that is, the projectivity of log canonical 
surfaces in Fujiki's class $\mathcal C$ 
with negative Kodaira dimension. 
Our proof needs the classification 
of two-dimensional log canonical singularities. 
In Section \ref{f-sec10}, we prove Theorem \ref{f-thm1.5}, 
that is, the minimal model theory for log canonical surfaces 
in Fujiki's class $\mathcal C$. 
In Section \ref{f-sec11}, which is an appendix, 
we discuss some vanishing theorems 
for proper bimeromorphic morphisms 
between normal analytic surfaces. 
Fortunately, we need no deep analytic methods 
except for 
the theorem on formal functions for proper 
morphisms between analytic spaces. 
In Section \ref{p-sec12}, which is also 
an appendix, we construct some complete non-projective log canonical 
algebraic surfaces. 

\begin{ack}
The author was partially supported by 
JSPS KAKENHI Grant Numbers JP16H03925, JP16H06337.
He would like to thank Kenta Hashizume, Haidong Liu, 
and Hiromu Tanaka 
for very useful comments and pointing out some mistakes. 
He also would like to thank Kento Fujita very much for useful discussions and 
advice, and for allowing him to use his ideas on complete non-projective 
algebraic surfaces in Section \ref{p-sec12}. 
Finally, he thanks Seiko Hashimoto for her help and 
the referees for many comments. 
\end{ack}

We will use the minimal model theory for projective log surfaces 
defined over $\mathbb C$, the complex number field, 
established in \cite{fujino-surfaces}. 
We will freely use the basic notation of the minimal model 
theory as in \cite{fujino-fundamental} and \cite{fujino-foundations}. 

\section{Preliminaries}\label{h-sec2} 

In this section, we collect some basic definitions and 
results. 

\begin{defn}[Boundary and subboundary $\mathbb Q$-divisors]\label
{h-def2.1}
Let $X$ be an irreducible normal analytic space and 
let $\Delta$ be a $\mathbb Q$-divisor 
on $X$. 
If the coefficients of $\Delta$ are in $[0, 1]\cap \mathbb Q$ 
(resp.~$(-\infty, 1]\cap \mathbb Q$),  
then $\Delta$ is called a {\em{boundary}} (resp.~{\em{subboundary}}) 
$\mathbb Q$-divisor on $X$. 
\end{defn}

\begin{defn}[Operations for $\mathbb Q$-divisors]\label{h-def2.2}
Let $D$ be a $\mathbb Q$-divisor 
on a normal analytic space. 
Then $\lceil D\rceil$ (resp.~$\lfloor D\rfloor$) 
denotes the {\em{round-up}} (resp.~{\em{round-down}}) 
of $D$. 
We put $\{D\}:=D-\lfloor D\rfloor$ and 
call it the {\em{fractional part}} of $D$. 
\end{defn}

\begin{defn}[Algebraic dimensions]\label{h-def2.3}
Let $X$ be an irreducible compact analytic space. 
Let $\mathcal M (X)$ be the field of 
meromorphic functions on $X$. 
Then the transcendence degree of $\mathcal M (X)$ 
over $\mathbb C$ is called the {\em{algebraic dimension}} 
of $X$ and is denoted by $a(X)$. 
It is well known that $0\leq a(X)\leq \dim X$ holds. 
If $a(X)=\dim X$ holds, 
then we say that $X$ is {\em{Moishezon}}. 
We note that if $X$ is Moishezon then 
$X$ is an algebraic space 
which is proper over $\mathbb C$ (see \cite[Remark 3.7]{ueno}). 
\end{defn}

For the basic properties of $a(X)$, we recommend the 
reader to see \cite[Section 3]{ueno}. 

\begin{defn}\label{h-def2.4} 
Let $X$ be an irreducible compact normal analytic 
space such that 
$X$ is Moishezon. 
Then we can obtain the perfect pairing 
$$
N_1(X)\times N^1(X)\to \mathbb R
$$ 
induced from the intersection pairing of 
curves and line bundles as in the case where $X$ 
is projective. We note that 
$\rho (X):=\dim _{\mathbb R} N^1(X)<\infty$ always holds. 
We call $\rho(X)$ the {\em{Picard number}} of $X$. 
When $X$ is an algebraic variety, $\NE(X)$ ($\subset\!\! N_1(X)$) 
denotes the {\em{Kleiman--Mori cone}} 
of $X$. 
\end{defn}

In this paper, 
we do not consider $N_1(X)$ when $0\leq a(X)<\dim X$. 
When $X$ is a complete non-projective 
singular algebraic variety, 
$\NE(X)$ does not always behave well 
(see \cite{fujino-kleiman}, \cite{fujino-payne}, 
and Section \ref{p-sec12}).  

\begin{rem}\label{h-rem2.5} 
Let $X$ be a compact smooth analytic surface whose 
algebraic dimension $a(X)$ is zero. 
Then it is well known that there are 
only finitely many curves on $X$ (see Chapter IV, \cite[(8.2) 
Theorem]{bhpv}). 
\end{rem}

In the subsequent sections, we will repeatedly use the following 
well-known negativity lemma and its consequences 
without mentioning them explicitly. 
For the proof of Lemma \ref{h-lem2.6}, 
see \cite[Theorem 4-6-1]{matsuki}. 

\begin{lem}[Negativity lemma]\label{h-lem2.6}
Let $P\in Y$ be a germ of normal analytic surface and let 
$f:X\to Y$ be a proper 
bimeromorphic morphism 
from a smooth analytic surface $X$. 
Then $f^{-1}(P)$ is connected and has a negative definite 
intersection form. 
\end{lem}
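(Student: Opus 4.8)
The plan is to prove the two assertions separately: connectedness of $f^{-1}(P)$, and then negative definiteness of the intersection form on its components. The overall strategy follows the classical Mumford-type argument, adapted to the analytic setting.

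\emph{Connectedness.} First I would apply Stein factorization to the proper morphism $f$, writing $f=g\circ h$ with $h\colon X\to Z$ proper with connected fibers and $h_*\mathcal O_X=\mathcal O_Z$, and $g\colon Z\to Y$ finite. Since $f$ is bimeromorphic, so is $g$; being finite and bimeromorphic onto the \emph{normal} surface $Y$, the map $g$ is an isomorphism. Hence $f=h$ has connected fibers, so $f^{-1}(P)$ is connected. (Equivalently, one may use $f_*\mathcal O_X=\mathcal O_Y$ together with the theorem on formal functions for proper morphisms of analytic spaces.)

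\emph{Setting up the intersection inequalities.} Write $f^{-1}(P)=E=\bigcup_{i=1}^n E_i$ with $E_1,\dots,E_n$ the irreducible components; these are compact since $f$ is proper, so their intersection numbers are defined on the smooth surface $X$, and $E_i\cdot E_j\geq 0$ for $i\neq j$ as distinct irreducible curves share no component. Choose a nonzero holomorphic function $h$ on a neighborhood $U$ of $P$ in $Y$ with $h(P)=0$, and let $H=\{h=0\}$, a Cartier divisor through $P$. Because $f(E_i)=P$ for every $i$, the function $h\circ f$ vanishes along all of $E$, so $f^*H=\widetilde H+\sum_i m_iE_i$ with every $m_i\geq 1$, where $\widetilde H$ is the strict transform and carries no $E_i$ as a component. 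The projection formula gives $f^*H\cdot E_j=H\cdot f_*E_j=0$ for every $j$, whence
$$
\sum_i m_i\,(E_i\cdot E_j)\;=\;-\,\widetilde H\cdot E_j\;\leq\;0\qquad(1\leq j\leq n),
$$
the inequality holding since $\widetilde H$ is effective with no $E_i$-component. Moreover $\widetilde H=\overline{f^{-1}(H\setminus\{P\})}$ is a nonempty curve meeting $E$, so $\widetilde H\cdot E_{j_0}>0$ for at least one index $j_0$, giving a strict inequality.

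\emph{Negative definiteness and the main obstacle.} Setting $a_{ij}=E_i\cdot E_j$, I would then invoke the elementary linear-algebra lemma: a real symmetric matrix $(a_{ij})$ with $a_{ij}\geq 0$ for $i\neq j$, admitting positive weights $(m_i)$ with $\sum_i m_i a_{ij}\leq 0$ for all $j$ and strict for some $j_0$, and ``connected'' in the sense that the index set is not split by vanishing of off-diagonal entries, is negative definite. The mechanism is the substitution $x_i=m_iy_i$, which yields
$$
x^{T}Ax=-\tfrac{1}{2}\sum_{i\neq j}a_{ij}\,m_im_j\,(y_i-y_j)^2-\sum_i c_i\,y_i^2,\qquad c_i:=-\sum_j a_{ij}\,m_im_j\geq 0;
$$
both terms are $\leq 0$, and vanishing forces all $y_i$ equal (by connectedness of $E$) and then zero (by the strict inequality at $j_0$). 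The connectedness hypothesis is exactly the first assertion, and the strict inequality is the meeting of $\widetilde H$ with $E$. The main work is not the combinatorial lemma, which is standard, but verifying that each classical ingredient has an analytic counterpart: that Stein factorization and ``finite plus bimeromorphic onto normal equals isomorphism'' hold here, that a local defining function with $f^*H\supseteq E$ exists (using compactness and connectedness of the fiber), and that intersection theory of compact curves on the smooth analytic surface $X$ behaves as in the projective case. These are the points requiring care, though none is deep.
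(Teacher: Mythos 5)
Your proof is correct and complete. The paper does not prove this lemma itself but simply refers to \cite[Theorem 4-6-1]{matsuki}, whose argument is exactly the classical Mumford computation you reproduce (Stein factorization plus normality for connectedness, pullback of a local defining function through $P$ to get the weights $m_i$, and the weighted quadratic-form identity), so you are following essentially the same route as the cited source.
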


Let us quickly recall the definition of the 
Iitaka dimension $\kappa$. 
For the details of $\kappa$, 
see \cite{nakayama} and \cite{ueno}. 

\begin{defn}[Iitaka dimensions]\label{h-def2.7} 
Let $X$ be an irreducible compact normal analytic space and let 
$\mathcal L$ be a line bundle on $X$. 
Then we set 
$$
\kappa (X, \mathcal L):=\limsup _{m\to \infty}
\frac{\log \dim _{\mathbb C} H^0(X, \mathcal L^{\otimes m})}
{\log m}
$$ 
and call it the {\em{Iitaka dimension}} of 
$\mathcal L$. 
It is well known that 
$$
\kappa (X, \mathcal L)\in \{-\infty, 0, 1, 2, \ldots, \dim X\} 
$$ 
holds. We can define $\kappa (X, D)$ for $\mathbb Q$-Cartier 
$\mathbb Q$-divisors $D$ on $X$ similarly. 
\end{defn}

We close this section with an easy lemma on rational singularities. 

\begin{lem}[{see \cite[Lemma 3.1]{fujino-rational}}]\label{h-lem2.8} 
Let $\varphi\colon X\to Y$ be a proper bimeromorphic 
morphism between normal analytic spaces. 
If $R^i\varphi_*\mathcal O_X=0$ for every $i>0$, 
then $X$ has only rational singularities if and only if so does 
$Y$. 
\end{lem}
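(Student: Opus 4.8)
The plan is to reduce both implications to a single Grothendieck (Leray) spectral sequence attached to a common resolution. Recall that a normal analytic space $Z$ has only rational singularities if and only if for one (equivalently, every) resolution $g\colon W\to Z$ one has $R^ig_*\mathcal O_W=0$ for all $i>0$; the independence of the chosen resolution is standard, and I would simply invoke it. First I would fix a resolution $h\colon W\to X$ and observe that $g:=\varphi\circ h\colon W\to Y$ is then a resolution of $Y$, being a proper bimeromorphic morphism from the smooth space $W$ onto the normal space $Y$. By normality, $h_*\mathcal O_W=\mathcal O_X$, $\varphi_*\mathcal O_X=\mathcal O_Y$, and $g_*\mathcal O_W=\mathcal O_Y$. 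The engine is the spectral sequence
$$
E_2^{p,q}=R^p\varphi_*\bigl(R^qh_*\mathcal O_W\bigr)\Longrightarrow R^{p+q}g_*\mathcal O_W.
$$

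For the forward implication, assume $X$ has only rational singularities, so $R^qh_*\mathcal O_W=0$ for $q>0$. Then only the bottom row survives, $E_2^{p,0}=R^p\varphi_*\mathcal O_X$, and the spectral sequence degenerates to give $R^ng_*\mathcal O_W\cong R^n\varphi_*\mathcal O_X$. By hypothesis the right-hand side vanishes for $n>0$, hence $R^ng_*\mathcal O_W=0$ for $n>0$ and $Y$ has only rational singularities.

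For the reverse implication, assume $Y$ has only rational singularities, i.e.\ $R^ng_*\mathcal O_W=0$ for $n>0$, and suppose for contradiction that some $R^qh_*\mathcal O_W$ is nonzero; let $q_0\ge 1$ be minimal with $\mathcal F:=R^{q_0}h_*\mathcal O_W\neq 0$. I claim the corner term survives, i.e.\ $E_\infty^{0,q_0}=E_2^{0,q_0}=\varphi_*\mathcal F$. Indeed, every differential touching $E_r^{0,q_0}$ vanishes: the incoming ones originate in the region $p<0$, while the outgoing ones land in a term $E_2^{r,q_0-r+1}=R^r\varphi_*\bigl(R^{q_0-r+1}h_*\mathcal O_W\bigr)$ with $r\ge 2$, which is zero either because $0<q_0-r+1<q_0$ forces the inner sheaf to vanish by minimality, or because $q_0-r+1\le 0$ reduces it to $R^r\varphi_*\mathcal O_X=0$ (as $r\ge 2$). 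Since $E_\infty^{0,q_0}$ is a subquotient of the abutment $R^{q_0}g_*\mathcal O_W=0$, I conclude $\varphi_*\mathcal F=0$.

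The remaining step, deducing $\mathcal F=0$ from $\varphi_*\mathcal F=0$, is the one I expect to be the main obstacle, since in general pushforward along a bimeromorphic morphism can annihilate a nonzero coherent sheaf (for instance $\varphi_*\mathcal O_E(-2)=0$ for a contracted $\mathbb P^1$). The way out is that $\mathcal F=R^{q_0}h_*\mathcal O_W$ is supported on the non-rational locus of $X$, which lies in $\mathrm{Sing}(X)$ and hence has codimension at least two because $X$ is normal; in the surface setting of this paper it is a finite set of points. As $\varphi$ is an isomorphism over the complement of a nowhere-dense analytic subset, the support of $\mathcal F$ is forced into the exceptional locus, and for a sheaf supported on a zero-dimensional set the pushforward is again a nonzero sheaf concentrated at the image points whenever $\mathcal F\neq 0$; this contradicts $\varphi_*\mathcal F=0$. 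Thus $\mathcal F=0$, the induction closes, and $X$ has only rational singularities. The general higher-dimensional statement follows the same strategy, the essential input again being the codimension bound on the supports of the sheaves $R^qh_*\mathcal O_W$.
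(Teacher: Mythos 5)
Your proposal is correct and follows essentially the same route as the paper: both pass to a resolution $W\to X$, compose with $\varphi$ to obtain a resolution of $Y$, and run the Leray spectral sequence for the composition. The paper's own proof is terser (it simply asserts $R^ip_*\mathcal O_W\simeq R^iq_*\mathcal O_W$ for a common resolution), and your explicit treatment of the degeneration and of the step $\varphi_*\mathcal F=0\Rightarrow\mathcal F=0$ via the zero-dimensional support of $R^{q_0}h_*\mathcal O_W$ supplies exactly the details that assertion suppresses.
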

\begin{proof}
The problem is local. 
So we can freely shrink $Y$ around an arbitrary given point. 
Let us consider a common resolution: 
$$
\xymatrix{
& W \ar[dr]^q\ar[dl]_p& \\ 
X \ar[rr]_\varphi& & Y. 
}
$$
By assumption and the Leray spectral sequence, 
we have $R^ip_*\mathcal O_W\simeq R^iq_*\mathcal O_W$ for 
every $i$. 
This implies the desired statement. 
\end{proof}

\section{Log surfaces}\label{f-sec3} 

In this section, we define $\mathbb Q$-factorial log 
surfaces and log canonical surfaces in Fujiki's class $\mathcal C$ 
and discuss a very easy version of the basepoint-free theorem for 
proper bimeromorphic morphisms between normal analytic surfaces. 

\medskip 

In this paper, we adopt the following definition of 
analytic spaces in Fujiki's class $\mathcal C$. 

\begin{defn}[Fujiki's class $\mathcal C$]\label{f-def3.1} 
Let $X$ be an irreducible 
compact analytic space. 
If $X$ is bimeromorphically equivalent to 
a compact K\"ahler manifold, 
then we say that $X$ is {\em{in Fujiki's class $\mathcal C$}}. 
\end{defn}

\begin{rem}\label{f-rem3.2}
Let $X$ be an irreducible compact analytic space. 
We note that if $X$ is Moishezon then $X$ is automatically 
in Fujiki's class $\mathcal C$. 
\end{rem}

We have a useful characterization of surfaces in 
Fujiki's class $\mathcal C$. 

\begin{lem}\label{f-lem3.3}
Let $X$ be an irreducible 
compact normal analytic surface. 
Then $X$ is in Fujiki's 
class $\mathcal C$ if and only if 
there exists a resolution of 
singularities $f\colon Y\to X$ such that 
$Y$ is K\"ahler, 
that is, $Y$ is a two-dimensional 
compact K\"ahler manifold and $f$ is a bimeromorphic morphism. 
\end{lem}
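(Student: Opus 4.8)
The plan is to prove the two implications separately, with the reverse implication being essentially immediate and the forward implication relying on the Kähler criterion for surfaces.

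First, for the ``if'' direction, suppose $f\colon Y\to X$ is a resolution of singularities with $Y$ a compact Kähler surface. Since $f$ is a bimeromorphic morphism, $X$ is by definition bimeromorphically equivalent to the compact Kähler manifold $Y$, so $X$ lies in Fujiki's class $\mathcal C$. Nothing further is required here.

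For the ``only if'' direction, I would assume $X$ is in class $\mathcal C$, so that there exist a compact Kähler manifold $Z$ and a bimeromorphic map $X\dashrightarrow Z$. Because bimeromorphic maps preserve dimension and $\dim X=2$, the manifold $Z$ is a smooth compact Kähler surface. I then fix any resolution of singularities $f\colon Y\to X$; composing with the given bimeromorphic map yields a bimeromorphic map $Y\dashrightarrow Z$ between smooth compact complex surfaces. The heart of the matter is to transport Kählerness from $Z$ to $Y$ across this correspondence. The clean route is via the criterion (see \cite[Chapter IV]{bhpv}) that a compact complex surface is Kähler if and only if its first Betti number $b_1$ is even, combined with the fact that $b_1$ is a bimeromorphic invariant of smooth compact surfaces: it is unchanged under blow-ups, and any bimeromorphic map between smooth compact surfaces is dominated by a common surface obtained from each side by a finite sequence of blow-ups. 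Hence $b_1(Y)=b_1(Z)$, which is even since $Z$ is Kähler, so $Y$ is Kähler as well, and $f\colon Y\to X$ is the desired resolution.

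The only genuine input is the surface Kähler criterion together with the bimeromorphic invariance of $b_1$; granting these, the argument is purely formal. The points to watch are that $Z$ must be recognized as a \emph{smooth} compact surface (rather than an abstract higher-dimensional Kähler manifold) so that the surface-specific criterion applies, and that the invariance of $b_1$ in fact shows \emph{every} resolution of $X$ to be Kähler, which is more than the mere existence the statement demands and is what makes the reduction to an arbitrary resolution legitimate.
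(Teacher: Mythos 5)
Your proof is correct and follows essentially the same route as the paper: both arguments rest on the criterion that a compact complex surface is K\"ahler if and only if $b_1$ is even, together with the invariance of $b_1$ under blow-ups (and hence under bimeromorphic maps of smooth compact surfaces). The paper states these two ingredients and leaves the verification to the reader; you have simply written out the details, including the correct observation that the argument actually shows every resolution of $X$ is K\"ahler.
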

\begin{proof}
Note that a compact smooth analytic surface $S$ is K\"ahler 
if and only if 
the first Betti number $b_1(S)$ is even (see 
\cite[Chapter IV, (3.1) Theorem]{bhpv}). 
We also note that 
the first Betti number is preserved under blow-ups. 
Thus we can easily check the statement. 
\end{proof}

As an easy consequence of Lemma \ref{f-lem3.3} and its proof, 
we have: 

\begin{cor}\label{f-cor3.4}
Let $X$ be a compact 
normal analytic surface in Fujiki's class $\mathcal C$. 
Let $f\colon Y\to X$ be any resolution of singularities. 
Then $Y$ is a compact K\"ahler manifold. 
\end{cor}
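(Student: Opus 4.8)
The plan is to exploit that, among compact smooth analytic surfaces, the parity of the first Betti number is a bimeromorphic invariant; this is precisely the engine already used in the proof of Lemma \ref{f-lem3.3}. Since $f\colon Y\to X$ is a resolution, $Y$ is a compact smooth analytic surface, so by the criterion \cite[Chapter IV, (3.1) Theorem]{bhpv} it suffices to show that $b_1(Y)$ is even. The hypothesis that $X$ lies in Fujiki's class $\mathcal C$ furnishes, via Lemma \ref{f-lem3.3}, at least one resolution $f_0\colon Y_0\to X$ with $Y_0$ K\"ahler, and hence with $b_1(Y_0)$ even. Thus the whole corollary reduces to the equality $b_1(Y)=b_1(Y_0)$.

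To establish this equality I would pass to a common smooth model. As $Y$ and $Y_0$ are both bimeromorphic to $X$, they are bimeromorphic to each other, so I can choose a compact smooth analytic surface $W$ admitting proper bimeromorphic morphisms $p\colon W\to Y$ and $q\colon W\to Y_0$, obtained by resolving the indeterminacy of the induced bimeromorphic map $Y\dashrightarrow Y_0$. Now I invoke the classical structure theorem for proper bimeromorphic morphisms between compact smooth surfaces: each of $p$ and $q$ is a finite composition of point blow-ups (see \cite[Chapter III]{bhpv}). Since a blow-up at a point does not change the first Betti number---the very fact recorded in the proof of Lemma \ref{f-lem3.3}---we obtain $b_1(W)=b_1(Y)$ and $b_1(W)=b_1(Y_0)$, and therefore $b_1(Y)=b_1(Y_0)$, as required.

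Once the monoidal factorization is in hand the argument is purely formal, so the only genuine point to check is that the factorization of bimeromorphic morphisms into blow-ups, together with the consequent invariance of $b_1$, is available in the analytic (possibly non-algebraic) category rather than merely for projective surfaces. For compact complex surfaces this is classical and is exactly the framework of \cite[Chapter III]{bhpv}, so I expect no real obstacle here; the statement is in the end an immediate packaging of Lemma \ref{f-lem3.3} with the bimeromorphic invariance of the parity of $b_1$.
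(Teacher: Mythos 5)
Your proof is correct and is essentially the argument the paper intends: the paper derives Corollary \ref{f-cor3.4} directly from Lemma \ref{f-lem3.3} and its proof, i.e., from the characterization of K\"ahler surfaces by the parity of $b_1$ together with the invariance of $b_1$ under point blow-ups and the factorization of bimeromorphic maps between smooth compact surfaces. Your write-up simply makes explicit the common smooth model and the monoidal factorization that the paper leaves implicit.
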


Let us define canonical sheaves. 

\begin{defn}[Canonical sheaves]\label{f-def3.5}
Let $X$ be a normal analytic surface and 
let $\mathrm{Sing} X$ denote the 
singular locus of $X$. 
Then we have $\mathrm{codim}_X\mathrm{Sing}X\geq 2$ since 
$X$ is normal. 
Let $\omega_U$ be the canonical bundle 
of $U:=X\setminus \mathrm{Sing}X$. 
We put $\omega_X:=\iota_*\mathcal \omega_U$, 
where $\iota\colon U\hookrightarrow X$ is the natural 
open immersion, and call $\omega_X$ the {\em{canonical 
sheaf}} of $X$. 
\end{defn}

\begin{rem}\label{f-rem3.6}
Some normal analytic surface $X$ does not 
admit any non-zero meromorphic section 
of $\omega_X$. 
However, if there is no risk of confusion, 
we use the symbol $K_X$ as a formal divisor class with 
an isomorphism $\mathcal O_X(K_X)\simeq 
\omega_X$ and call it the {\em{canonical divisor}} of 
$X$. 
\end{rem}

In this paper, we adopt the following definition of 
log surfaces. 

\begin{defn}[Log surfaces]\label{f-def3.7} 
Let $X$ be an irreducible compact 
normal analytic surface and let $\Delta$ be a boundary 
$\mathbb Q$-divisor on $X$. 
Assume that $K_X+\Delta$ is $\mathbb Q$-Cartier. 
Note that this means that 
there exists a positive integer $m$ such that 
$m\Delta$ is integral and 
that $\left(\omega^{\otimes m}_X\otimes 
\mathcal O_X(m\Delta)\right)^{**}$ is locally free. 
Then the pair $(X, \Delta)$ is called a {\em{log surface}}. 
We say that a log surface $(X, \Delta)$ is {\em{in Fujiki's 
class $\mathcal C$}} when $X$ is in Fujiki's class $\mathcal C$. 
Let $(X, \Delta)$ be a log surface. 
Then we usually call $\kappa (X, K_X+\Delta)$ 
the {\em{Kodaira dimension}} of $(X, \Delta)$. 
\end{defn}

We need to define log canonical surfaces. 

\begin{defn}[Log canonical surfaces]\label{f-def3.8}
Let $(X, \Delta)$ be a log surface and let $f\colon Y\to X$ 
be a proper bimeromorphic morphism 
from a smooth analytic surface $Y$. 
Then we can write 
$
K_Y+\Delta_Y=f^*(K_X+\Delta)
$ 
with $f_*\Delta_Y=\Delta$. 
If the coefficients of $\Delta_Y$ are less than or equal to one 
for every $f\colon Y\to X$, 
then $(X, \Delta)$ is called a {\em{log canonical surface}}. 
\end{defn}

The notion of $\mathbb Q$-factoriality plays a crucial 
role in this paper. 

\begin{defn}[$\mathbb Q$-factoriality]\label{f-def3.9}
Let $X$ be an irreducible compact normal analytic surface and let 
$D$ be a $\mathbb Q$-divisor 
on $X$. 
Then we say that $D$ is {\em{$\mathbb Q$-Cartier}} if 
there exists a positive integer $m$ such that 
$mD$ is Cartier. 
If every Weil divisor on $X$ is $\mathbb Q$-Cartier, 
then we say that $X$ is {\em{$\mathbb Q$-factorial}}.  
\end{defn}

Lemma \ref{f-lem3.10} is well known. 

\begin{lem}\label{f-lem3.10}
Let $X$ be an irreducible compact normal analytic surface. 
Assume that $X$ has only rational singularities. 
Then $X$ is $\mathbb Q$-factorial. 
\end{lem}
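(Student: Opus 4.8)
The plan is to reduce the question to a local problem around each singular point and then combine the negativity lemma with the exponential sequence. Since a $\mathbb Q$-divisor is $\mathbb Q$-Cartier if and only if it is so in a neighborhood of each point, and $X$ is smooth away from its finitely many singular points (where every Weil divisor is automatically Cartier), I may fix a singular point $P\in X$, shrink $X$ to a small contractible Stein neighborhood of $P$, and take an arbitrary Weil divisor $D$ on $X$. I choose a good resolution $f\colon Y\to X$ whose exceptional locus $E=\sum_{i=1}^{n}E_i$ over $P$ is a simple normal crossing divisor, and let $\widetilde D$ denote the strict transform of $D$, which is Cartier because $Y$ is smooth.

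First I would produce a $\mathbb Q$-divisor on $Y$ that is numerically trivial over $X$. By the negativity lemma (Lemma \ref{h-lem2.6}) the intersection matrix $(E_i\cdot E_j)_{i,j}$ is negative definite, hence invertible over $\mathbb Q$, so there exist unique $a_i\in\mathbb Q$ for which $\mathcal D:=\widetilde D+\sum_i a_i E_i$ satisfies $\mathcal D\cdot E_j=0$ for every $j$. Clearing denominators yields a positive integer $m$ with $m\mathcal D$ integral, and I set $\mathcal L:=\mathcal O_Y(m\mathcal D)$, a line bundle with $\mathcal L\cdot E_i=0$ for all $i$.

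The heart of the argument is to show that $\mathcal L$ is trivial on $Y$. Because $X$ is a contractible Stein neighborhood of $P$, the space $Y=f^{-1}(X)$ deformation retracts onto $E$, so $H^2(Y,\mathbb Z)\cong H^2(E,\mathbb Z)$; since $E$ has only normal crossings on the smooth surface $Y$ (so at most two components meet at a point), a Mayer--Vietoris computation gives $H^2(E,\mathbb Z)\cong\bigoplus_i H^2(E_i,\mathbb Z)\cong\mathbb Z^{n}$, under which $c_1(\mathcal L)$ corresponds to the tuple of degrees $(\mathcal L\cdot E_i)_i=0$. Hence $c_1(\mathcal L)=0$ in $H^2(Y,\mathbb Z)$. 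On the other hand, the exponential sequence $0\to\mathbb Z\to\mathcal O_Y\to\mathcal O_Y^{*}\to 0$ gives exactness of $H^1(Y,\mathcal O_Y)\to \Pic(Y)\xrightarrow{c_1}H^2(Y,\mathbb Z)$, and the rationality of the singularity gives $R^1f_*\mathcal O_Y=0$, whence $H^1(Y,\mathcal O_Y)\cong H^0(X,R^1f_*\mathcal O_Y)=0$ by the Leray spectral sequence together with $f_*\mathcal O_Y=\mathcal O_X$ and the Steinness of $X$. Therefore $c_1$ is injective and $\mathcal L\cong\mathcal O_Y$, i.e.\ $m\mathcal D=\operatorname{div}_Y(g)$ for some meromorphic function $g$ on $Y$.

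Finally I would descend. As $f$ is bimeromorphic, $g=f^{*}h$ for a unique $h\in\mathcal M(X)$, and pushing forward divisors gives $\operatorname{div}_X(h)=f_*(m\mathcal D)=mD$, since the $E_i$ are $f$-exceptional while $\widetilde D$ pushes to $D$; here one uses that two Weil divisors agreeing outside the codimension-two set $\{P\}$ must coincide. Thus $mD$ is principal, hence Cartier, near $P$, so $D$ is $\mathbb Q$-Cartier. As $P$ and $D$ were arbitrary, $X$ is $\mathbb Q$-factorial. I expect the main obstacle to be the topological identification $H^2(Y,\mathbb Z)\cong\mathbb Z^{n}$ realizing $c_1$ as the tuple of intersection numbers with the $E_i$ --- this is exactly where the simple normal crossing resolution and the retraction $Y\simeq E$ are used --- together with the passage from $R^1f_*\mathcal O_Y=0$ to injectivity of $c_1$; the negativity lemma and the descent step are then routine.
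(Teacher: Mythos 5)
Your argument is correct. The paper itself gives no argument here: it simply cites \cite[Chapter II, 2.12.~Lemma]{nakayama}, and what you have written out is essentially the standard proof underlying that citation. The logical skeleton is exactly right: reduce to a small Stein neighborhood of each of the finitely many singular points, use the negativity lemma to produce a $\mathbb Q$-divisor $\mathcal D=\widetilde D+\sum_i a_iE_i$ numerically trivial on the fibre, and then use $R^1f_*\mathcal O_Y=0$ together with the exponential sequence to show that numerical triviality over $X$ forces $\mathcal O_Y(m\mathcal D)$ to be trivial, after which the meromorphic function descends by normality and $mD$ is principal near $P$. The two points that genuinely need care are the ones you flag yourself: that a sufficiently small neighborhood $Y=f^{-1}(X)$ deformation retracts onto $E$ so that $H^2(Y,\mathbb Z)\to\bigoplus_iH^2(E_i,\mathbb Z)$ is injective and identifies $c_1(\mathcal L)$ with the tuple of degrees $(\mathcal L\cdot E_i)_i$ (this is classical, going back to Mumford's 1961 paper cited in the bibliography as \cite{mumford1}), and that $H^1(Y,\mathcal O_Y)=H^0(X,R^1f_*\mathcal O_Y)=0$ via Leray over the Stein base. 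Both are standard, and with them your proof is complete; one could also remark that at the very end one should take the least common multiple of the integers $m$ obtained at the finitely many singular points to get a single $m$ with $mD$ Cartier globally, but this is immediate from compactness, which you have already invoked.
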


\begin{proof}
This follows from \cite[Chapter II, 2.12.~Lemma]{nakayama}. 
\end{proof}

We close this section with a very easy version of 
the basepoint-free theorem for projective 
bimeromorphic morphisms between normal analytic 
surfaces (see also Remark \ref{f-rem11.4} below). 

\begin{thm}\label{f-thm3.11} 
Let $(X, \Delta)$ be a log surface and 
let $\varphi\colon X\to Y$ be a projective bimeromorphic morphism 
onto a normal analytic surface $Y$. 
Assume that $C:=\Exc(\varphi)$ is $\mathbb Q$-Cartier, 
$C\simeq \mathbb P^1$, and 
$-C$ and $-(K_X+\Delta)$ are $\varphi$-ample. 
Let $\mathcal L$ be a line bundle on $X$ with $\mathcal L\cdot C=0$. 
Then there exists a line bundle $\mathcal L_Y$ on $Y$ 
such that $\mathcal L\simeq \varphi^*\mathcal L_Y$ holds. 
In particular, $X$ is $\mathbb Q$-factorial 
if and only if so is $Y$. 
\end{thm}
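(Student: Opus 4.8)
The plan is to work locally on $Y$ and to prove that $\mathcal L$ becomes trivial in an analytic neighbourhood of $C=\Exc(\varphi)$; the bundle $\mathcal L_Y$ is then obtained by descent. Since the statement is local on $Y$, I would first shrink $Y$ to a Stein neighbourhood of $P:=\varphi(C)$. As $\dim \varphi(C)<\dim C=1$, the image $\varphi(C)$ is the single point $P$ and $\varphi$ is an isomorphism over $Y\setminus\{P\}$; moreover $\varphi_*\mathcal O_X=\mathcal O_Y$ because $Y$ is normal and $\varphi$ is bimeromorphic. Writing $C_n$ for the $n$-th infinitesimal neighbourhood of $C$ (defined by $\mathcal I_C^{\,n+1}$) and $\mathcal L_n:=\mathcal L|_{C_n}$, the goal reduces to showing that $\mathcal L_n\simeq\mathcal O_{C_n}$ compatibly in $n$, i.e. that $\mathcal L$ is trivial along the formal completion of $X$ at $C$. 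I would note that only the hypotheses $\mathcal L\cdot C=0$ and the $\varphi$-ampleness of $-C$ will actually be used for this descent.

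The base case is immediate: since $C\simeq\mathbb P^1$ and $\deg_C(\mathcal L|_C)=\mathcal L\cdot C=0$, we get $\mathcal L_0\simeq\mathcal O_{\mathbb P^1}$. For the inductive step I would use the square-zero extension of sheaves of abelian groups on $C$
\[
0\to \mathcal I_C^{\,n}/\mathcal I_C^{\,n+1}\to \mathcal O_{C_n}^{\times}\to \mathcal O_{C_{n-1}}^{\times}\to 0,
\]
where $g\mapsto 1+g$ identifies the additive sheaf $\mathcal I_C^{\,n}/\mathcal I_C^{\,n+1}$ with a subsheaf of units, which yields
\[
H^1\bigl(C,\mathcal I_C^{\,n}/\mathcal I_C^{\,n+1}\bigr)\to \Pic(C_n)\to \Pic(C_{n-1})\to H^2\bigl(C,\mathcal I_C^{\,n}/\mathcal I_C^{\,n+1}\bigr).
\]
As $C$ is one-dimensional the last term vanishes, so $\Pic(C_n)\to\Pic(C_{n-1})$ is surjective with kernel a quotient of $H^1(C,\mathcal I_C^{\,n}/\mathcal I_C^{\,n+1})$. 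Hence, once one knows $H^1(C,\mathcal I_C^{\,n}/\mathcal I_C^{\,n+1})=0$ for every $n\ge 1$, triviality of $\mathcal L_{n-1}$ forces triviality of $\mathcal L_n$, and the trivialisations can be chosen compatibly because the maps on $H^0(\mathcal O_{C_n}^{\times})$ are then also surjective. This conormal vanishing is where the $\varphi$-ampleness of $-C$ enters: if $C$ is Cartier and $X$ is smooth along $C$, then $\mathcal I_C^{\,n}/\mathcal I_C^{\,n+1}\simeq\mathcal O_{\mathbb P^1}(-nC^2)$ with $-C^2>0$, whence $H^1=0$.

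With formal triviality in hand, I would invoke the theorem on formal functions for the proper morphism $\varphi$. It gives $(\varphi_*\mathcal L)^{\wedge}_P\simeq\varprojlim_n H^0(C_n,\mathcal L_n)\simeq \varprojlim_n H^0(C_n,\mathcal O_{C_n})=(\varphi_*\mathcal O_X)^{\wedge}_P=\widehat{\mathcal O}_{Y,P}$, so the completion of the coherent $\mathcal O_Y$-module $\varphi_*\mathcal L$ at $P$ is free of rank one; by faithful flatness of completion for the analytic local ring $\mathcal O_{Y,P}$ the stalk $(\varphi_*\mathcal L)_P$ is itself free of rank one. Since $\varphi_*\mathcal L\simeq\mathcal L$ over $Y\setminus\{P\}$, the sheaf $\mathcal L_Y:=\varphi_*\mathcal L$ is a line bundle on $Y$. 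It then remains to check that the evaluation map $\varphi^*\mathcal L_Y\to\mathcal L$ is an isomorphism: it is so over $Y\setminus\{P\}$, and along $C$ it restricts to a generator of $(\varphi_*\mathcal L)\otimes k(P)$, i.e. to a nowhere-vanishing section of $\mathcal O_C$; regarding the map as a section of $\mathcal L\otimes(\varphi^*\mathcal L_Y)^{-1}$, its zero divisor is effective and supported on $C$ but does not vanish along $C$, hence is empty, so the map is an isomorphism. Establishing the conormal vanishing $H^1(C,\mathcal I_C^{\,n}/\mathcal I_C^{\,n+1})=0$ when $C$ is merely $\mathbb Q$-Cartier and meets $\mathrm{Sing}\,X$, together with this passage from formal to honest triviality in the analytic rather than the algebraic category, is the step I expect to demand the most care.

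Finally, for the last assertion I would argue directly, using that $C$ is $\mathbb Q$-Cartier. If $Y$ is $\mathbb Q$-factorial and $B$ is a Weil divisor on $X$, then $\varphi^*\varphi_*B$ is $\mathbb Q$-Cartier and $B-\varphi^*\varphi_*B$ is supported on $C$, hence a rational multiple of the $\mathbb Q$-Cartier divisor $C$, so $B$ is $\mathbb Q$-Cartier. Conversely, if $X$ is $\mathbb Q$-factorial and $B_Y$ is a Weil divisor on $Y$, then its strict transform $B$ is $\mathbb Q$-Cartier; correcting a multiple of the associated line bundle by a suitable rational multiple of $\mathcal O_X(C)$ and clearing denominators produces a line bundle meeting $C$ in degree zero, and applying the descent just proved to it exhibits $B_Y$ as $\mathbb Q$-Cartier.
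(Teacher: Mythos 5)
Your overall strategy (trivialize $\mathcal L$ on the infinitesimal neighbourhoods $C_n$, pass to the formal completion, and descend via the theorem on formal functions) is a genuinely different route from the paper's, but it has a gap exactly where you flag it, and that gap is not a technicality. Your inductive step needs $H^1(C,\mathcal I_C^{\,n}/\mathcal I_C^{\,n+1})=0$ for all $n\geq 1$, and you verify this only when $C$ is Cartier and $X$ is smooth along $C$, where $\mathcal I_C^{\,n}/\mathcal I_C^{\,n+1}\simeq \mathcal O_{\mathbb P^1}(-nC^2)$. In the actual setting $C$ is merely $\mathbb Q$-Cartier and may pass through singular points of $X$; there the graded conormal sheaves are not locally free, are not determined by $-C^2>0$, and their $H^1$ is not controlled without further input on the singularities of $X$ along $C$. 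Relatedly, your assertion that only $\mathcal L\cdot C=0$ and the $\varphi$-ampleness of $-C$ are used is not justified: the $\varphi$-ampleness of $-(K_X+\Delta)$ is precisely what forces $(Y,\varphi_*\Delta)$ to be numerically dlt at $P$, hence forces $X$ to have only rational singularities along $C$ (compare Theorem \ref{f-thm8.4} and Lemma \ref{h-lem2.8}); if $X$ were allowed a simple elliptic singularity on $C$, the relevant $R^1$ would not vanish and numerically trivial line bundles need not descend. So the hypothesis you propose to drop is the one that makes the cohomological input available.

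The paper sidesteps the whole formal-neighbourhood analysis. Since $-C$ and $-(K_X+\Delta)$ are both $\varphi$-ample, one may increase the coefficient of $C$ in $\Delta$ so that $C\leq \Delta$, and then the relative Kawamata--Viehweg-type vanishing (Theorem \ref{f-thm11.3}) applied to $\mathcal L\otimes\mathcal O_X(-C)$, with the inequality $\mathcal L\cdot C+\left(-C-(K_X+\Delta-C)\right)\cdot C=-(K_X+\Delta)\cdot C>0$, gives $R^1\varphi_*(\mathcal L\otimes\mathcal O_X(-C))=0$ in one stroke. Pushing forward $0\to\mathcal O_X(-C)\to\mathcal O_X\to\mathcal O_C\to 0$ twisted by $\mathcal L$ then shows $\varphi_*\mathcal L\to H^0(C,\mathcal L|_C)\simeq H^0(\mathbb P^1,\mathcal O_{\mathbb P^1})$ is surjective, so $\mathcal L$ is $\varphi$-free and $\mathcal L_Y:=\varphi_*\mathcal L$ works. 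To repair your argument you would essentially have to re-prove this vanishing (or the rationality of the singularities of $X$ along $C$) by hand, at which point the paper's direct application of Theorem \ref{f-thm11.3} is both shorter and strictly more robust. Your final paragraph on the equivalence of $\mathbb Q$-factoriality is correct and matches the paper's Steps \ref{a-step2} and \ref{a-step3}.
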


\begin{proof}
In Step \ref{a-step1}, 
we will prove the existence of $\mathcal L_Y$. 
In Steps \ref{a-step2} and \ref{a-step3}, we will see that 
$X$ is $\mathbb Q$-factorial if and only if so is $Y$. 

\begin{step}\label{a-step1}
Since $C\simeq \mathbb P^1$ and $\mathcal L\cdot C=0$, 
$\mathcal L|_C$ is trivial. 
Since $-C$ and $-(K_X+\Delta)$ are $\varphi$-ample, 
we may assume that $C\leq \Delta$ by increasing 
the coefficient of $C$ in $\Delta$. 
Let us consider the following short exact sequence: 
$$
0\to \mathcal O_X(-C)\to \mathcal O_X\to \mathcal O_C\to 0. 
$$ 
Note that 
$$
\mathcal L\cdot C+\left(-C-(K_X+\Delta-C)\right)\cdot C
=\mathcal L\cdot C-(K_X+\Delta)\cdot C>0
$$ 
holds. Therefore, by Theorem \ref{f-thm11.3} below, 
we get $R^1\varphi_*(\mathcal L\otimes 
\mathcal O_X(-C))=0$. 
Thus, we have the following 
short exact sequence: 
\begin{equation}\label{eq-1}
0\to \varphi_*(\mathcal L\otimes \mathcal O_X(-C))\to \varphi_*\mathcal 
L \to \varphi_*(\mathcal L|_C)\to 0. 
\end{equation}
We note that 
$$
\varphi_*(\mathcal L|_C)=H^0(C, \mathcal L|_C)\simeq 
H^0(\mathbb P^1, \mathcal O_{\mathbb P^1}). 
$$ 
By \eqref{eq-1}, $\mathcal L$ is $\varphi$-free 
since $\mathcal L|_C$ is trivial. 
We note that $\varphi_*\mathcal O_X\simeq \mathcal O_Y$ since 
$Y$ is normal and $\varphi$ has connected fibers. 
Thus $\mathcal L_Y:=\varphi_*\mathcal L$ is a line 
bundle on $Y$ such that 
$\mathcal L\simeq \varphi^*\mathcal L_Y$ holds. 
\end{step}
\begin{step}\label{a-step2}
Assume that $X$ is $\mathbb Q$-factorial. 
We take a prime divisor $D$ on $Y$. 
Let $D'$ be the strict transform of $D$ on $X$. 
Then we can take $a\in \mathbb Q$ and a divisible 
positive integer $m$ such that $m(D'+aC)$ is Cartier 
and $m(D'+aC)\cdot C=0$. 
We put $\mathcal L=\mathcal O_X(m(D'+aC))$ and 
apply the result obtained in Step \ref{a-step1} 
to $\mathcal L$. 
Then $mD=\varphi_*(m(D'+aC))$ is Cartier. 
This means that $Y$ is $\mathbb Q$-factorial.  
\end{step}
\begin{step}\label{a-step3}
Assume that $Y$ is $\mathbb Q$-factorial. 
We take a prime divisor $D$ on $X$. 
Then $D':=\varphi_*D$ is a $\mathbb Q$-Cartier prime divisor on $Y$. 
Since $D=\varphi^*D'-aC$ holds for some 
$a\in \mathbb Q$, $D$ is $\mathbb Q$-Cartier. 
Therefore, $X$ is $\mathbb Q$-factorial. 
\end{step}
We complete the proof of Theorem \ref{f-thm3.11}. 
\end{proof}

\section{Projectivity criteria}\label{f-sec4}

Let us start with an easy but very useful projectivity criterion. 

\begin{lem}[Projectivity of $\mathbb Q$-factorial 
compact analytic surfaces]\label{f-lem4.1}
Let $X$ be a $\mathbb Q$-factorial 
compact analytic surface. 
Assume that the algebraic dimension $a(X)$ of $X$ is 
two,  
that is, $X$ is Moishezon. Then $X$ is projective. 
\end{lem}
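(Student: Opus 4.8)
The plan is to reduce to a smooth projective surface by resolving, and then to descend an ample class along the resolution using $\mathbb{Q}$-factoriality, verifying the Nakai--Moishezon criterion on $X$.

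First I would fix a resolution of singularities $f\colon Y\to X$ with $Y$ smooth and compact. Since the algebraic dimension is a bimeromorphic invariant and $X$ is Moishezon, $Y$ is Moishezon as well, that is $a(Y)=2$; a smooth compact complex surface of algebraic dimension two is projective (classical; see \cite{bhpv}), so $Y$ is a smooth projective surface. Write $\Exc(f)=\bigcup_i E_i$ for the finitely many $f$-exceptional prime curves. By the negativity lemma (Lemma \ref{h-lem2.6}) the intersection matrix $M=(E_i\cdot E_j)$ is negative definite.

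Next, choose an ample divisor $A$ on $Y$ and set $D:=f_*A$, a Weil divisor on $X$. Since $X$ is $\mathbb{Q}$-factorial, $D$ is $\mathbb{Q}$-Cartier, so $f^*D$ is defined and I may write $f^*D=A+\sum_i c_i E_i$ with $c_i\in\mathbb{Q}$, the correction being supported on the exceptional locus. These coefficients are pinned down by the numerical conditions $f^*D\cdot E_j=0$ for all $j$, i.e. $\sum_i c_i(E_i\cdot E_j)=-A\cdot E_j$. Because $A$ is ample and each $E_j$ is a curve we have $A\cdot E_j>0$; since $-M$ is symmetric positive definite with nonpositive off-diagonal entries (distinct exceptional curves meet nonnegatively), it is an M-matrix, so its inverse has nonnegative entries, and I conclude $c_i\ge 0$ for every $i$.

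Finally I would verify ampleness of $D$ (after passing to an integral multiple $mD$) by the Nakai--Moishezon criterion for complete algebraic surfaces, which applies since $X$ is Moishezon and hence a normal complete algebraic surface. For the self-intersection, $f^*D\cdot E_i=0$ and the projection formula give $D^2=(f^*D)^2=A\cdot f^*D=A^2+\sum_i c_i(A\cdot E_i)\ge A^2>0$. For an arbitrary curve $C$ on $X$, writing $\tilde C$ for its strict transform and using $f_*\tilde C=C$, the projection formula yields $D\cdot C=f^*D\cdot\tilde C=A\cdot\tilde C+\sum_i c_i(E_i\cdot\tilde C)\ge A\cdot\tilde C>0$, since $c_i\ge 0$ and $E_i\cdot\tilde C\ge 0$. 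Hence $mD$ is ample and $X$ is projective. The step I expect to be the main obstacle is precisely this descent: everything hinges on producing a genuine $\mathbb{Q}$-Cartier divisor $D$ on $X$ whose pullback differs from $A$ by a nonnegative exceptional combination, which is exactly where $\mathbb{Q}$-factoriality enters and where the M-matrix positivity of the negative definite intersection form is needed; one must also be careful that the Nakai--Moishezon criterion is genuinely available on the a priori only Moishezon surface $X$ rather than on an already-projective one.
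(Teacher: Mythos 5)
Your proposal is correct and follows essentially the same route as the paper: resolve to a smooth projective surface, push an ample divisor down, use $\mathbb Q$-factoriality to make it $\mathbb Q$-Cartier, and conclude via the Nakai--Moishezon criterion on the Moishezon surface $X$ (the paper invokes the version for algebraic spaces). The only cosmetic difference is that you verify $D\cdot C>0$ by pulling back $D$ and controlling the exceptional coefficients, while the paper pulls back the ($\mathbb Q$-Cartier) curve $C$ and computes $f_*H\cdot C=H\cdot f^*C>0$ directly.
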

\begin{proof}
By the assumption $a(X)=2$, 
we can construct a proper 
bimeromorphic morphism $f\colon Y\to X$ from a 
smooth projective surface $Y$. 
By the assumption $a(X)=2$ again, 
$X$ is an algebraic space which is proper over $\mathbb C$ 
by Artin's GAGA (see \cite[Remark 3.7]{ueno}). 
We take a very ample effective Cartier divisor $H$ on $Y$. 
We put $A=f_*H$. 
Since $X$ is $\mathbb Q$-factorial, 
$A$ is a $\mathbb Q$-Cartier divisor. 
Then we have $A\cdot C=H\cdot f^*C>0$ for every 
curve $C$ on $X$. In particular, we have $A^2>0$. 
Therefore, $A$ is ample by 
Nakai--Moishezon's ampleness criterion for algebraic spaces 
(see \cite[(1.4) Theorem]{pascul}).  
This implies that 
$X$ is projective.  
\end{proof}

The following corollary is obvious by Lemma \ref{f-lem4.1}. 

\begin{cor}\label{f-cor4.2}
Let $X$ be a $\mathbb Q$-factorial 
compact analytic surface. Assume that 
there exists a line bundle $\mathcal L$ such that 
$\kappa (X, \mathcal L)=2$, that is, $\mathcal L$ is a 
big line bundle. Then $X$ is projective. 
\end{cor}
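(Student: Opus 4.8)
The plan is to reduce the statement to Lemma~\ref{f-lem4.1} by showing that the hypothesis $\kappa(X,\mathcal L)=2$ forces $X$ to be Moishezon, that is, $a(X)=2$. Once this is established, $\mathbb Q$-factoriality together with Lemma~\ref{f-lem4.1} gives the projectivity of $X$ immediately.

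First I would invoke the general inequality $\kappa(X,\mathcal L)\leq a(X)$, which holds for any line bundle $\mathcal L$ on a compact irreducible normal analytic space $X$. Concretely, for $m\gg 0$ with $H^0(X,\mathcal L^{\otimes m})\neq 0$, the ratios $s/t$ with $s,t\in H^0(X,\mathcal L^{\otimes m})$ are meromorphic functions on $X$, and the subfield of $\mathcal M(X)$ that they generate has transcendence degree over $\mathbb C$ equal to $\kappa(X,\mathcal L)$ (this is the description of the Iitaka dimension as the dimension of the image of the associated Iitaka map; see \cite[Section 3]{ueno} and \cite{nakayama}). Since this subfield is contained in $\mathcal M(X)$, whose transcendence degree over $\mathbb C$ equals $a(X)$ by Definition~\ref{h-def2.3}, we obtain $\kappa(X,\mathcal L)\leq a(X)$. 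Combining this with $a(X)\leq\dim X=2$ and the hypothesis $\kappa(X,\mathcal L)=2$, I would conclude $a(X)=2$, so that $X$ is Moishezon.

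There is essentially no obstacle here: the only nontrivial ingredient is the standard inequality $\kappa\leq a$ from the basic theory of Iitaka dimensions recalled before Definition~\ref{h-def2.7}, and all of the substantive analytic content—namely the passage from the Moishezon property to projectivity via the Nakai--Moishezon criterion for algebraic spaces—has already been packaged into Lemma~\ref{f-lem4.1}. This is precisely why the corollary is immediate once Lemma~\ref{f-lem4.1} is in hand, and no further analytic input is required.
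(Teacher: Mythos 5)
Your proposal is correct and follows exactly the paper's route: deduce $a(X)=2$ from the bigness of $\mathcal L$ (via the standard inequality $\kappa(X,\mathcal L)\leq a(X)$, which the paper leaves implicit) and then apply Lemma~\ref{f-lem4.1}. The only difference is that you spell out the justification of $\kappa\leq a$, which the paper simply asserts.
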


\begin{proof}
By the assumption that $\mathcal L$ is big, 
we see that the algebraic dimension $a(X)$ of $X$ is two. 
Therefore, $X$ is projective by Lemma \ref{f-lem4.1}.  
\end{proof}

By Corollary \ref{f-cor4.2}, the minimal model theory  
for projective $\mathbb Q$-factorial log surfaces 
established in \cite{fujino-surfaces} works for 
$(X, \Delta)$ with $\kappa (X, K_X+\Delta)=2$ in 
Theorem \ref{f-thm1.1}. 

\medskip

By combining Lemma \ref{f-lem4.1} with the Enriques--Kodaira 
classification, 
we obtain the following projectivity criterion. 

\begin{lem}\label{f-lem4.3}
Let $(X, \Delta)$ be a $\mathbb Q$-factorial 
log surface in 
Fujiki's class $\mathcal C$ with 
$\kappa (X, K_X+\Delta)=-\infty$. 
Then $X$ is projective. 
\end{lem}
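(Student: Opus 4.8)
The plan is to prove that $X$ is Moishezon and then invoke Lemma \ref{f-lem4.1}; since $X$ is $\mathbb Q$-factorial, once we know $a(X)=2$ projectivity is immediate. By Corollary \ref{f-cor3.4} any resolution of singularities $f\colon Y\to X$ has $Y$ a compact K\"ahler surface, and $a(X)=a(Y)$ because the algebraic dimension is a bimeromorphic invariant. Thus it suffices to show that $Y$ is projective, i.e.\ that $a(Y)=2$. I would fix $f$ to be an isomorphism over the smooth locus $U:=X\setminus \mathrm{Sing}\,X$.

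First I would convert the hypothesis on $K_X+\Delta$ into one on $K_X$. Since $X$ is $\mathbb Q$-factorial, $K_X$, $\Delta$, and $K_X+\Delta$ are all $\mathbb Q$-Cartier, so for $m$ sufficiently divisible the canonical section of the effective divisor $m\Delta$ induces an injection $\mathcal O_X(mK_X)\hookrightarrow \mathcal O_X(m(K_X+\Delta))$, giving $\kappa(X,K_X)\le \kappa(X,K_X+\Delta)=-\infty$, hence $\kappa(X,K_X)=-\infty$. Next I would transfer this to $Y$. The sheaf $\mathcal O_X(mK_X)=(\omega_X^{\otimes m})^{**}$ is reflexive and agrees with $f_*\omega_Y^{\otimes m}$ over $U$ (where $f$ is an isomorphism), so restriction to $U$ yields a natural inclusion $f_*\omega_Y^{\otimes m}\hookrightarrow \mathcal O_X(mK_X)$. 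Therefore $H^0(Y,mK_Y)=H^0(X,f_*\omega_Y^{\otimes m})\hookrightarrow H^0(X,\mathcal O_X(mK_X))$ for every $m>0$, whence $\kappa(Y,K_Y)\le \kappa(X,K_X)=-\infty$, i.e.\ the Kodaira dimension of the surface $Y$ is $-\infty$.

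Finally I would apply the Enriques--Kodaira classification: a compact K\"ahler surface of Kodaira dimension $-\infty$ is rational or ruled, and hence projective (see \cite[Chapter VI]{bhpv}). Thus $Y$ is projective, so $a(Y)=2$ and $X$ is Moishezon, and Lemma \ref{f-lem4.1} gives that $X$ is projective.

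I expect the delicate point to be the transfer step to $Y$. Because $(X,\Delta)$ is only assumed to be a log surface, the singularities of $X$ may be arbitrarily bad, with possibly negative discrepancies, so one cannot in general identify $f_*\omega_Y^{\otimes m}$ with $\mathcal O_X(mK_X)$; what saves the argument is that $\mathcal O_X(mK_X)$ is the \emph{reflexive} hull, which provides the inclusion $f_*\omega_Y^{\otimes m}\hookrightarrow \mathcal O_X(mK_X)$ regardless of the singularities, and this must be justified with care (e.g.\ by choosing $f$ to be an isomorphism over $U$ and extending sections across the codimension-two locus $\mathrm{Sing}\,X$). By comparison, the pluricanonical monotonicity on $X$ and the invocation of the classification are routine.
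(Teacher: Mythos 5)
Your proof is correct and follows essentially the same route as the paper: pass to a K\"ahler resolution $Y$ of $X$, show $\kappa(Y,K_Y)=-\infty$, conclude that $Y$ is projective by the Enriques--Kodaira classification, and finish with Lemma \ref{f-lem4.1}. The only difference is in the middle step: the paper takes the \emph{minimal} resolution, writes $K_Y+\Delta_Y=f^*(K_X+\Delta)$, and notes that $\Delta_Y\geq 0$ by the negativity lemma, so that $\kappa(Y,K_Y)\leq\kappa(Y,K_Y+\Delta_Y)=\kappa(X,K_X+\Delta)=-\infty$, whereas you use the inclusion $f_*\omega_Y^{\otimes m}\hookrightarrow \mathcal O_X(mK_X)$ of reflexive hulls for an arbitrary resolution --- both are valid, and the paper's version sidesteps the ``delicate point'' you flag at the end.
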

\begin{proof}
We take the minimal resolution $f\colon Y\to X$. 
We put $K_Y+\Delta_Y:=f^*(K_X+\Delta)$. 
Then we see that $\Delta_Y$ is effective by the negativity lemma 
and that $\kappa (Y, K_X+\Delta_Y)=\kappa (X, K_X+\Delta)=-\infty$ 
holds. 
Therefore, we obtain $\kappa (Y, K_Y)=-\infty$ by 
$\kappa (Y, K_Y)\leq \kappa (Y, K_Y+\Delta_Y)=-\infty$. 
Since $X$ is in Fujiki's class $\mathcal C$, 
the first Betti number $b_1(Y)$ of $Y$ is even. 
Therefore, by the Enriques--Kodaira classification 
(see \cite[Chapter VI]{bhpv}), 
$Y$ is a smooth projective 
surface. 
Thus, by Lemma \ref{f-lem4.1}, 
$X$ is projective. 
\end{proof}

We will repeatedly use the above projectivity criteria 
throughout this paper. 

\medskip 

We note that the statement of Theorem \ref{f-thm1.3} 
looks very similar to that of Lemma \ref{f-lem4.3}. 
However, a log canonical surface is not necessarily 
$\mathbb Q$-factorial. 
Therefore, Theorem \ref{f-thm1.3} 
is much harder to prove than Lemma \ref{f-lem4.3} 
(see Section \ref{f-sec9}). 

\section{Minimal model program for 
$\mathbb Q$-factorial log 
surfaces}\label{f-sec5}

By repeatedly using  
Grauert's contraction theorem, we can easily run a 
kind of the minimal model program for $\mathbb Q$-factorial 
log surfaces $(X, \Delta)$. 
We note that 
$X$ is not assumed to be in Fujiki's class $\mathcal C$ 
in Theorem \ref{f-thm5.1}. 
A key point of Theorem \ref{f-thm5.1} is the assumption that 
$X$ is $\mathbb Q$-factorial. 

\begin{thm}\label{f-thm5.1}
Let $(X, \Delta)$ be a compact $\mathbb Q$-factorial 
log surface. 
We assume that 
$\kappa (X, K_X+\Delta)\geq 0$. 
Then we can 
construct a finite sequence of 
projective bimeromorphic morphisms 
$$
(X, \Delta)=:(X_0, \Delta_0)\overset{\varphi_0}{\longrightarrow} 
(X_1, \Delta_1) \overset{\varphi_1}{\longrightarrow} 
\cdots 
\overset{\varphi_{k-1}}{\longrightarrow} 
(X_k, \Delta_k)=:(X^*, \Delta^*)
$$
with $\Delta_i:={\varphi_{i-1}}_*
\Delta_{i-1}$, $\Exc(\varphi_i)=:C_i\simeq \mathbb P^1$, 
and 
$-(K_{X_i}+\Delta_i)\cdot C_i>0$ for 
every $i$ such that $(K_{X^*}+\Delta^*)\cdot C\geq 0$ for 
every curve $C$ on $X^*$. We note that 
$(X_i, \Delta_i)$ is a compact $\mathbb Q$-factorial 
log surface for 
every $i$. 
\end{thm}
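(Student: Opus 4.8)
The plan is to run a minimal model program by repeatedly contracting curves on which $K+\Delta$ is negative, using Grauert's contraction theorem as the contraction tool and Theorem \ref{f-thm3.11} to propagate $\mathbb Q$-factoriality. First I would set $(X_0,\Delta_0):=(X,\Delta)$ and, assuming $(X_i,\Delta_i)$ has been constructed, examine whether $(K_{X_i}+\Delta_i)\cdot C\ge 0$ for every curve $C$; if so, I stop and put $(X^*,\Delta^*):=(X_i,\Delta_i)$. Otherwise there is a curve $C$ with $(K_{X_i}+\Delta_i)\cdot C<0$, and the key local analysis is to show that any such $C$ satisfies $C\simeq\mathbb P^1$ and $C^2<0$. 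For the self-intersection I would use the hypothesis $\kappa(X_i,K_{X_i}+\Delta_i)\ge 0$ (preserved at every stage, see below) to fix an effective $\mathbb Q$-divisor $D_i\sim_{\mathbb Q}K_{X_i}+\Delta_i$; then $D_i\cdot C<0$ forces $C$ to be a component of $D_i$, and writing $D_i=aC+D_i'$ with $a>0$ and $C\not\subseteq\Supp D_i'$ gives $aC^2=D_i\cdot C-D_i'\cdot C<0$, hence $C^2<0$. For the genus I would split off the coefficient $c\in[0,1]$ of $C$ in $\Delta_i$ to deduce $(K_{X_i}+C)\cdot C\le (K_{X_i}+\Delta_i)\cdot C<0$, using $(c-1)C^2\ge 0$, and then invoke adjunction on the normal surface $X_i$ to conclude $C\simeq\mathbb P^1$.

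Once $C\simeq\mathbb P^1$ and $C^2<0$ are in hand, the one-curve intersection form $(C^2)$ is negative definite, so Grauert's contraction theorem produces a proper bimeromorphic morphism $\varphi_i\colon X_i\to X_{i+1}$ onto a normal analytic surface with $\Exc(\varphi_i)=C=:C_i$. Since $X_i$ is $\mathbb Q$-factorial, $-C_i$ is $\mathbb Q$-Cartier with $(-C_i)\cdot C_i=-C_i^2>0$, and as $C_i$ is the only contracted curve the relative Picard number is one; hence $-C_i$ is $\varphi_i$-ample and $\varphi_i$ is projective, and likewise $-(K_{X_i}+\Delta_i)$ is $\varphi_i$-ample because $(K_{X_i}+\Delta_i)\cdot C_i<0$. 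These are exactly the hypotheses of Theorem \ref{f-thm3.11}, which then yields that $X_{i+1}$ is $\mathbb Q$-factorial. Consequently $K_{X_{i+1}}+\Delta_{i+1}$ is automatically $\mathbb Q$-Cartier for $\Delta_{i+1}:=(\varphi_i)_*\Delta_i$ (a boundary, since pushforward only drops the coefficient of $C_i$), so $(X_{i+1},\Delta_{i+1})$ is again a compact $\mathbb Q$-factorial log surface. Writing $K_{X_i}+\Delta_i=\varphi_i^*(K_{X_{i+1}}+\Delta_{i+1})+a_iC_i$ and intersecting with $C_i$ gives $a_iC_i^2=(K_{X_i}+\Delta_i)\cdot C_i<0$, so $a_i>0$; the exceptional part is therefore effective and $\kappa(X_{i+1},K_{X_{i+1}}+\Delta_{i+1})=\kappa(X_i,K_{X_i}+\Delta_i)\ge 0$, which allows the iteration to continue.

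The remaining---and, in this non-algebraic setting, the most delicate---point is termination, since $X_i$ need not be Moishezon and its Picard number may be infinite, so the usual ``$\rho$ drops by one'' argument is unavailable. Instead I would track the number of irreducible components of the chosen effective representative. Taking $D_{i+1}:=(\varphi_i)_*D_i\sim_{\mathbb Q}K_{X_{i+1}}+\Delta_{i+1}$, the contracted curve $C_i$ is a component of $D_i$ but disappears under pushforward, while distinct non-contracted components keep distinct images; hence the number of irreducible components of $D_{i+1}$ is strictly smaller than that of $D_i$. Since this number is a nonnegative integer bounded by the number of components of $D_0$, the process must stop after finitely many steps, and the stopping condition is precisely $(K_{X^*}+\Delta^*)\cdot C\ge 0$ for every curve $C$. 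Collecting the above gives the asserted finite sequence, completing the proof.
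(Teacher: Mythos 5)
Your proposal follows the same skeleton as the paper's proof: fix an effective divisor $D\in |m(K_X+\Delta)|$, observe that any curve $C$ with $(K_X+\Delta)\cdot C<0$ is a component of $\Supp D$ with $C^2<0$, contract it, transport $\mathbb Q$-factoriality with Theorem \ref{f-thm3.11}, and terminate because $\Supp D$ loses an irreducible component at each step. (One small point of hygiene: since $X_i$ is a normal, possibly singular, surface and $C$ is a priori only a Weil divisor with $C^2$ a Mumford intersection number, the contraction tool is Sakai's theorem \cite[Theorem (1.2)]{sakai} rather than Grauert's criterion itself; this is exactly what the paper invokes.) The one place where you genuinely diverge is the proof that the contracted curve is $\mathbb P^1$. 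The paper contracts first and then deduces $H^1(C,\mathcal O_C)=0$ from the vanishing $R^1{\varphi_0}_*\mathcal O_X=0$ (Theorem \ref{f-thm11.3}) combined with the theorem on formal functions (Claim \ref{g-claim1}); you instead derive $(K_{X_i}+C)\cdot C<0$ and appeal to adjunction. Your route can be made to work, but as stated it is a little thin: adjunction to the \emph{normalization} $\bar C$ only gives $\deg\bigl(K_{\bar C}+\mathrm{Diff}\bigr)<0$ with $\mathrm{Diff}\geq 0$, hence $\bar C\simeq \mathbb P^1$, i.e.\ that $C$ is a rational curve; it does not by itself exclude that $C$ is a \emph{singular} rational curve, since singularities of $C$ located at singular points of $X_i$ are not detected by this computation. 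To conclude $C\simeq \mathbb P^1$ one needs the sharper genus inequality for Weil divisors on normal surfaces, $1+\tfrac{1}{2}(K_X+C)\cdot C\geq p_a(C)$ with $p_a$ the full arithmetic genus (see \cite{sakai}), or equivalently the fact that the different picks up coefficient at least one over every point where $C$ or $X$ is singular. With that reference supplied your argument closes; the paper's vanishing-theorem argument sidesteps the issue entirely, and is also the version that recycles verbatim for the non-$\mathbb Q$-factorial contraction in Theorem \ref{f-thm8.4}. Everything else --- the projectivity of $\varphi_i$ from the $\mathbb Q$-Cartierness of $C_i$, the preservation of $\kappa\geq 0$, and the termination count --- matches the paper's proof.
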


\begin{proof}
Since $\kappa (X, K_X+\Delta)\geq 0$, 
we can take an effective 
Cartier divisor $D\in |m(K_X+\Delta)|$ for some 
large and divisible positive integer $m$. 
If $m(K_X+\Delta)\cdot C=D\cdot C\geq 0$ for every curve 
$C$ on $X$, then we set $(X^*, \Delta^*):=(X_0, \Delta_0)=(X, \Delta)$. 
So we assume that 
there exists some irreducible curve $C$ on $X$ such that 
$D\cdot C<0$. 
Then $C$ is an irreducible component of $\Supp D$ and $C^2<0$. 
By Sakai's contraction theorem 
(see \cite[Theorem (1.2)]{sakai}), 
which is a slight generalization of Grauert's famous 
contraction theorem, 
we get a bimeromorphic morphism 
$\varphi_0\colon X=X_0\to X_1$ that contracts $C$ to a normal point 
of $X_1$. We take a divisible positive integer $l$ such that 
$lC$ is Cartier. 
Then $\mathcal O_X(-lC)$ is a $\varphi_0$-ample 
line bundle on $X$. 
In particular, $\varphi_0$ is a projective 
morphism. By construction, $-(K_X+\Delta)\cdot C>0$. 
Therefore, $-(K_X+\Delta)$ is $\varphi_0$-ample. 
Thus, $R^i{\varphi_0}_*\mathcal O_X=0$ for 
every $i>0$ by 
Theorem \ref{f-thm11.3} below. 
\begin{claim}\label{g-claim1}
$C$ is isomorphic to $\mathbb P^1$. 
\end{claim}
\begin{proof}[Proof of Claim \ref{g-claim1}]
We consider the following exact sequence: 
$$
\cdots \to R^1{\varphi_0}_*\mathcal O_X\to R^1{\varphi_0}_*
\mathcal O_C\to R^2{\varphi_0}_*\mathcal I_C\to \cdots, 
$$ 
where $\mathcal I_C$ is the defining ideal sheaf of $C$ on $X$. 
As we saw above, $R^1{\varphi_0}_*\mathcal O_X=0$ holds. 
Since $C$ is a curve, 
$R^2{\varphi_0}_*\mathcal I_C=0$ 
holds by the theorem on formal functions for 
proper morphisms between analytic spaces 
(see \cite[Chapter VI, Corollary 4.7]{banica-s}). 
Thus we get $H^1(C, \mathcal O_C)=R^1{\varphi_0}_*\mathcal O_C=0$ 
by the above exact sequence. 
This implies that 
$C$ is isomorphic to $\mathbb P^1$. 
\end{proof}

Therefore, by Theorem \ref{f-thm3.11}, 
we obtain that 
$(X_1, \Delta_1)$ is a $\mathbb Q$-factorial log surface. 
Since $\Supp D$ has only finitely many irreducible 
components, we get a desired sequence of contraction morphisms 
and finally obtain $(X^*, \Delta^*)$ with $(K_{X^*}
+\Delta^*)\cdot C\geq 0$ for every curve $C$ on $X^*$. 
\end{proof}

We note the following well-known lemma on extremal rays of  
projective surfaces. 

\begin{lem}\label{f-lem5.2}
Let $X$ be a normal projective surface and let 
$C$ be a $\mathbb Q$-Cartier irreducible 
curve on $X$ with $C^2<0$. 
Then the numerical equivalence class $[C]$ of $C$ spans an extremal 
ray of the Kleiman--Mori cone $\NE(X)$ of $X$. 
\end{lem}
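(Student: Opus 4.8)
The plan is to exhibit a nef class $H$ on $X$ that serves as a supporting functional for $\NE(X)$ whose zero locus on the cone is exactly $\mathbb{R}_{\geq 0}[C]$; then $\mathbb{R}_{\geq 0}[C]=\NE(X)\cap H^{\perp}$ is a one-dimensional face of $\NE(X)$, which is precisely the assertion that $[C]$ spans an extremal ray. Since $C$ is $\mathbb{Q}$-Cartier it determines a class in $N^1(X)\simeq N_1(X)$, so the intersection numbers $C\cdot D$ and $C^2$ used below all make sense.

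First I would construct $H$. Fix an ample $\mathbb{Q}$-divisor $A$ on $X$. As $A$ is ample and $C$ is a curve, $A\cdot C>0$, while $C^2<0$ by hypothesis, so $\mu:=-(A\cdot C)/C^2>0$. I set
$$H:=A+\mu C=A-\frac{A\cdot C}{C^2}\,C,$$
a $\mathbb{Q}$-Cartier class with $H\cdot C=0$ by construction. I would then check that $H$ is nef: for any irreducible curve $D\neq C$ one has $C\cdot D\geq 0$, hence $H\cdot D=A\cdot D+\mu\,(C\cdot D)>0$, together with $H\cdot C=0$; being non-negative on every irreducible curve, $H$ is nef. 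Consequently $F:=\NE(X)\cap H^{\perp}$ is a face of $\NE(X)$, and $\mathbb{R}_{\geq 0}[C]\subseteq F$ is immediate.

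It remains to prove the reverse inclusion $F\subseteq\mathbb{R}_{\geq 0}[C]$, and this is the step I expect to be the main obstacle, since here one must pass from effective $1$-cycles to their limits in $\NE(X)$. Given $z\in F$, I would write $z=\lim_n[\gamma_n]$ with $\gamma_n$ effective, and split $\gamma_n=b_nC+\delta_n$ with $b_n\geq 0$ and $\delta_n$ effective and not supported on $C$. From $H\cdot C=0$ and $H\cdot z=0$ I deduce $H\cdot\delta_n=H\cdot\gamma_n\to 0$; then, using $A=H-\mu C$ together with $A\cdot\delta_n\geq 0$ and $C\cdot\delta_n\geq 0$, the squeeze $0\leq A\cdot\delta_n\leq H\cdot\delta_n\to 0$ forces $A\cdot\delta_n\to 0$. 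Because $A$ is ample, Kleiman's criterion says $A\cdot(-)$ is strictly positive on $\NE(X)\setminus\{0\}$, so $\NE(X)$ has a compact base $\{A\cdot(-)=1\}$; hence $A\cdot\delta_n\to 0$ forces $[\delta_n]\to 0$ in $N_1(X)$. Therefore $b_n[C]=[\gamma_n]-[\delta_n]\to z$, which gives $z\in\mathbb{R}_{\geq 0}[C]$. This yields $F=\mathbb{R}_{\geq 0}[C]$, a one-dimensional face of $\NE(X)$, completing the argument. The only delicate points are the boundedness of the classes $[\delta_n]$ and the appeal to the compactness of the base of $\NE(X)$; everything else reduces to the elementary intersection computations above.
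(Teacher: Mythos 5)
Your argument is correct and complete. The paper itself gives no proof: it declares the statement obvious and cites \cite[Lemma 1.22]{kollar-mori}, whose argument runs through the closed subcone $W\subseteq \NE(X)$ generated by the irreducible curves $D\neq C$, using $C\cdot w\geq 0$ for $w\in W$ and $C^2<0$ to see $[C]\notin W$ and then deducing extremality from the decomposition $\NE(X)=W+\mathbb{R}_{\geq 0}[C]$. Your route is the dual version of the same idea: instead of the subcone you build the supporting nef class $H=A-\frac{A\cdot C}{C^2}C$ and identify $\mathbb{R}_{\geq 0}[C]$ with the face $\NE(X)\cap H^{\perp}$. The two proofs use the same two inputs (the $\mathbb{Q}$-Cartier hypothesis to make $C\cdot D\geq 0$ meaningful for $D\neq C$, and Kleiman positivity of an ample class on $\NE(X)\setminus\{0\}$, valid here since $X$ is projective), and your limiting argument with the compact slice $\{A\cdot(-)=1\}$ correctly handles the passage from effective $1$-cycles to the closed cone, which is the only genuinely delicate point. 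What your version buys is a self-contained proof that does not presuppose the decomposition of $\NE(X)$ and that makes explicit where the $\mathbb{Q}$-Cartier and projectivity hypotheses enter; what it costs is nothing of substance.
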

\begin{proof}
This is obvious. For the proof, see \cite[Lemma 1.22]{kollar-mori}. 
\end{proof}

By Lemma \ref{f-lem5.2}, if $X$ is projective in Theorem 
\ref{f-thm5.1}, then the minimal model program 
in Theorem \ref{f-thm5.1} is nothing but the 
minimal model program for 
projective $\mathbb Q$-factorial log surfaces formulated and 
established in \cite{fujino-surfaces}. 
We also note that $X$ is projective in 
Theorem \ref{f-thm5.1} if the algebraic 
dimension $a(X)$ of $X$ is two by Lemma \ref{f-lem4.1}. 

\medskip 

We recall that $\mathbb Q$-factorial log surfaces 
$(X, \Delta)$ 
in Fujiki's class $\mathcal C$ with $\kappa(X, K_X+\Delta)=-\infty$ are 
projective by Lemma \ref{f-lem4.3}.  

\medskip 

Let us prove Theorem \ref{f-thm1.1} except for 
the semi-ampleness of $K_{X^*}+\Delta^*$. 

\begin{proof}[Proof of Theorem \ref{f-thm1.1}]
If $(K_X+\Delta)\cdot C\geq 0$ for every 
curve $C$ on $X$, then we put $(X^*, \Delta^*):=(X, \Delta)$. 
We will see that $K_{X^*}+\Delta^*$ is semi-ample in 
Theorem \ref{h-thm7.2}. 
We note that $X$ is in Fujiki's class $\mathcal C$. 
If $\kappa (X, K_X+\Delta)=-\infty$, then 
$X$ is projective by Lemma \ref{f-lem4.3}. 
Therefore, we can run the minimal model program for $\mathbb Q$-factorial 
projective log surfaces in \cite{fujino-surfaces} and finally get a Mori fiber space. 
Therefore, we may further assume that 
$\kappa (X, K_X+\Delta)\geq 0$. 
Then we can apply Theorem \ref{f-thm5.1} and 
finally get a model $(X^*, \Delta^*)$ 
such that 
$(K_{X^*}+\Delta^*)\cdot C\geq 0$ for every 
curve $C$ on $X^*$. 
In this case, by the abundance 
theorem:~Theorem \ref{h-thm7.2}, 
we will see that $K_{X^*}+\Delta^*$ is 
semi-ample. 

Since we have $R^1{\varphi_i}_*\mathcal O_{X_i}=0$ 
(see Theorem \ref{f-thm11.3}), 
$X_i$ has only rational singularities if and only if so does 
$X_{i+1}$  
by Lemma \ref{h-lem2.8}. 
Thus we have (2). 

Since each contraction $\varphi_i$ is projective, 
$X_i$ is projective when so is $X_{i+1}$. 
On the other hand, if $X_i$ is projective then so is 
$X_{i+1}$ 
because $\varphi_i$ is nothing but the usual contraction morphism 
associated to a $(K_{X_i}+\Delta_i)$-negative 
extremal ray (see Lemma \ref{f-lem5.2}). 
Thus, we have (1). 
\end{proof}

We obtained Theorem \ref{f-thm1.1} 
except for the semi-ampleness of $K_{X^*}+\Delta^*$, 
which will be proved in Section \ref{h-sec7}. 

\section{Finite generation of log canonical rings}\label{f-sec6}

In this section, we briefly discuss 
the finite generation of log canonical rings of pairs 
for the reader's convenience. 

\medskip 

The following theorem is the main result of this section, 
which is essentially contained in \cite{fujino-surfaces}. 

\begin{thm}[Finite generation of log canonical rings]
\label{f-thm6.1} 
Let $(X, \Delta)$ be a compact 
$\mathbb Q$-factorial log surface. 
Then the log canonical ring 
$$
\bigoplus _{m\geq 0} H^0(X, \mathcal O_X(\lfloor m(K_X+\Delta)\rfloor))
$$ 
is a finitely generated $\mathbb C$-algebra. 
We note that the sheaf 
$\mathcal O_X(\lfloor m(K_X+\Delta)\rfloor)$ denotes 
$\left(\omega^{\otimes m}_X\otimes \mathcal O_X(\lfloor 
m\Delta\rfloor)\right)^{**}$. 
\end{thm}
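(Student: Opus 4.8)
The plan is to reduce, via the minimal model program, to the situation in which $K_X+\Delta$ is nef, and then to analyze the resulting model according to its Iitaka dimension. First, if $\kappa(X, K_X+\Delta)=-\infty$, then $H^0(X, \mathcal O_X(\lfloor m(K_X+\Delta)\rfloor))=0$ for every $m>0$, so the log canonical ring equals $\mathbb C$ in degree $0$ and there is nothing to prove. Hence I may assume $\kappa(X, K_X+\Delta)\geq 0$ from now on.

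Next I would apply Theorem \ref{f-thm5.1} to obtain a $\mathbb Q$-factorial model $(X^*, \Delta^*)$ with $(K_{X^*}+\Delta^*)\cdot C\geq 0$ for every curve $C$. The crucial point is that the log canonical ring is unchanged at each step $\varphi_i\colon X_i\to X_{i+1}$. Writing $K_{X_i}+\Delta_i=\varphi_i^*(K_{X_{i+1}}+\Delta_{i+1})+a_iC_i$ and intersecting with $C_i$, the relations $\varphi_i^*(K_{X_{i+1}}+\Delta_{i+1})\cdot C_i=0$, $C_i^2<0$, and $-(K_{X_i}+\Delta_i)\cdot C_i>0$ force $a_i>0$; since $C_i$ is $\varphi_i$-exceptional, pushing forward then gives ${\varphi_i}_*\mathcal O_{X_i}(\lfloor m(K_{X_i}+\Delta_i)\rfloor)=\mathcal O_{X_{i+1}}(\lfloor m(K_{X_{i+1}}+\Delta_{i+1})\rfloor)$ and hence a degreewise isomorphism of the two log canonical rings. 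Therefore it suffices to prove finite generation for $(X^*, \Delta^*)$ with $K_{X^*}+\Delta^*$ nef, and $\kappa(X^*, K_{X^*}+\Delta^*)=\kappa(X, K_X+\Delta)\in\{0,1,2\}$.

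I would then split into three cases. If $\kappa=2$, then $K_{X^*}+\Delta^*$ is big, so $X^*$ is Moishezon and, being $\mathbb Q$-factorial, projective by Lemma \ref{f-lem4.1}; finite generation follows from the projective theory of \cite{fujino-surfaces}. If $\kappa=0$, then $\dim_{\mathbb C} H^0(X^*, \mathcal O_{X^*}(\lfloor m(K_{X^*}+\Delta^*)\rfloor))\leq 1$ for every $m$ (otherwise the associated meromorphic map would have positive-dimensional image), and since the log canonical ring is an integral domain the set of $m$ with nonzero degree-$m$ part is a sub-semigroup of $\mathbb Z_{\geq 0}$; every such semigroup is finitely generated, so the ring is a finitely generated semigroup ring $\mathbb C[S]$ and we are done.

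The remaining case $\kappa=1$ is the main obstacle, precisely because here $X^*$ need not be algebraic and nef divisors on surfaces are not semi-ample in general (Zariski's examples), so finite generation is not a formal consequence of nefness. My plan is to use the semi-ampleness of $K_{X^*}+\Delta^*$: once this is known, a suitable multiple is base-point-free and defines a fibration $f\colon X^*\to B$ onto a smooth curve with $K_{X^*}+\Delta^*\sim_{\mathbb Q}f^*A$ for an ample $\mathbb Q$-divisor $A$ on $B$, so that the log canonical ring is the section ring of $A$ on $B$ and is finitely generated. The substantive input—semi-ampleness of a nef $K_{X^*}+\Delta^*$ of Iitaka dimension one, together with the structure of the induced Iitaka fibration on a possibly non-algebraic surface—is exactly the content established in \cite{fujino-surfaces}, so in this case I would reduce to and cite that reference; this is why the theorem is ``essentially contained'' there.
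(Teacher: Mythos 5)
Your proposal reaches the right conclusion but takes a genuinely different (and longer) route than the paper, and one step needs a better justification. The paper does not run the minimal model program at all: it disposes of $\kappa(X,K_X+\Delta)\leq 1$ in one stroke via Lemma \ref{f-lem6.3}, which says that for \emph{any} line bundle $\mathcal L$ on an irreducible compact normal analytic space with $\kappa(X,\mathcal L)\leq 1$ the section ring is finitely generated (for $\kappa=1$ one passes to the Iitaka fibration and reduces to an ample line bundle on a smooth projective curve, following \cite[(1.12) Theorem]{mori}); no nefness or semi-ampleness is needed, since Zariski-type failures of finite generation only occur for $\kappa=2$. The only remaining case, $\kappa=2$, is handled exactly as you do: bigness forces $X$ to be Moishezon, hence projective by $\mathbb Q$-factoriality (Corollary \ref{f-cor4.2}), and \cite{fujino-surfaces} applies. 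Your $\kappa=-\infty$, $\kappa=0$, and $\kappa=2$ cases are fine, and your verification that the log canonical ring is preserved under each contraction of Theorem \ref{f-thm5.1} is correct, though that whole reduction is unnecessary.

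The point you should fix is the $\kappa=1$ case. You attribute the semi-ampleness of the nef model $K_{X^*}+\Delta^*$ (and the structure of its Iitaka fibration on a possibly non-algebraic surface) to \cite{fujino-surfaces}, but that reference treats only \emph{projective} log surfaces; moreover, Theorem \ref{f-thm6.1} does not assume that $X$ is in Fujiki's class $\mathcal C$, so the abundance theorem of this paper (Theorem \ref{h-thm7.2}) is not available either. As written, the key input for your main case is therefore unsupported. It can be repaired: Lemma \ref{h-lem7.3}, which rests on Zariski's lemma and \cite[(4.1) Theorem]{fujita}, gives semi-ampleness of a nef line bundle of Iitaka dimension one on an arbitrary compact normal analytic surface, with no projectivity or Fujiki-class hypothesis. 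Alternatively, and more efficiently, you can drop the MMP and the semi-ampleness argument entirely and invoke the general $\kappa\leq 1$ finite generation statement of Lemma \ref{f-lem6.3}, as the paper does.
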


As an easy consequence of Theorem \ref{f-thm6.1}, we have: 

\begin{cor}\label{f-cor6.2} 
Let $(X, \Delta)$ be a compact log canonical surface. 
Then the log canonical ring 
$$
\bigoplus _{m\geq 0} H^0(X, \mathcal O_X(\lfloor m(K_X+\Delta)\rfloor))
$$ 
is a finitely generated $\mathbb C$-algebra. 
\end{cor}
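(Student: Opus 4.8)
The plan is to reduce to the $\mathbb Q$-factorial case already settled in Theorem \ref{f-thm6.1} by passing to the minimal resolution. First I would take the minimal resolution of singularities $f\colon Y\to X$, which exists because $X$ is a normal analytic surface; here $Y$ is compact since $f$ is proper bimeromorphic, and $Y$ is a smooth surface and therefore $\mathbb Q$-factorial. Writing $K_Y+\Delta_Y=f^*(K_X+\Delta)$ with $f_*\Delta_Y=\Delta$ as in Definition \ref{f-def3.8}, I claim that $\Delta_Y$ is a boundary $\mathbb Q$-divisor. Its coefficients are at most one precisely because $(X,\Delta)$ is log canonical. For effectivity I would write $\Delta_Y=f^{-1}_*\Delta+(f^*K_X-K_Y)$: the first term is effective, and since the minimal resolution makes $K_Y$ relatively nef over $X$ (so $(K_Y-f^*K_X)\cdot E\ge 0$ for every $f$-exceptional curve $E$), the negativity lemma (Lemma \ref{h-lem2.6}) forces $f^*K_X-K_Y$ to be effective and $f$-exceptional. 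Hence $(Y,\Delta_Y)$ is a compact $\mathbb Q$-factorial log surface.

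Next I would apply Theorem \ref{f-thm6.1} to $(Y,\Delta_Y)$ to conclude that $\bigoplus_{m\ge 0}H^0(Y,\mathcal O_Y(\lfloor m(K_Y+\Delta_Y)\rfloor))$ is a finitely generated $\mathbb C$-algebra. It then remains to identify this ring with the log canonical ring of $(X,\Delta)$. Since $K_Y+\Delta_Y=f^*(K_X+\Delta)$, we have $\lfloor m(K_Y+\Delta_Y)\rfloor=\lfloor f^*(m(K_X+\Delta))\rfloor$ for every $m$, so it suffices to prove $f_*\mathcal O_Y(\lfloor f^*D\rfloor)=\mathcal O_X(\lfloor D\rfloor)$ for the $\mathbb Q$-Cartier divisor $D=m(K_X+\Delta)$. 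Both sides are reflexive divisorial subsheaves of the sheaf of meromorphic functions $\mathcal M_X$, they agree on the big open subset where $f$ is an isomorphism, and along any prime divisor $\Gamma$ on $X$ the order determined by the left-hand side is $-\lfloor \mathrm{coeff}_{\Gamma}D\rfloor$, because the coefficient of $f^*D$ along the strict transform of $\Gamma$ equals $\mathrm{coeff}_{\Gamma}D$ while the $f$-exceptional divisors impose no condition after pushing forward. Taking global sections and summing over $m$ yields the desired isomorphism of graded rings, and finite generation is inherited.

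The main obstacle, such as it is, lies in the two bookkeeping points above: verifying that $\Delta_Y$ is effective by combining the minimal resolution with the negativity lemma, and checking the crepant identity $f_*\mathcal O_Y(\lfloor f^*D\rfloor)=\mathcal O_X(\lfloor D\rfloor)$ at the level of orders along prime divisors. Both are standard and elementary; the essential role of the log canonical hypothesis is exactly that the coefficients of $\Delta_Y$ do not exceed one, which is what makes $(Y,\Delta_Y)$ a log surface so that Theorem \ref{f-thm6.1} applies.
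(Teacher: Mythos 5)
Your proposal is correct and follows essentially the same route as the paper's own proof: pass to the minimal resolution, use the negativity lemma together with log canonicity to see that $\Delta_Y$ is a boundary $\mathbb Q$-divisor, apply Theorem \ref{f-thm6.1} to $(Y,\Delta_Y)$, and identify the two log canonical rings via pushforward. (A small quibble: your displayed decomposition $\Delta_Y=f^{-1}_*\Delta+(f^*K_X-K_Y)$ omits the effective exceptional part of the Mumford pullback $f^*\Delta$, but since that term is also effective the effectivity conclusion is unaffected.)
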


We note that $X$ is not assumed to be in Fujiki's class $\mathcal C$ 
in Theorem \ref{f-thm6.1} and Corollary \ref{f-cor6.2}. 
 
\begin{proof}[Proof of Corollary \ref{f-cor6.2}]
Let $f\colon Y\to X$ be the minimal resolution. 
We put $K_Y+\Delta_Y:=f^*(K_X+\Delta)$. 
Since $(X, \Delta)$ is log canonical, 
we see that $\Delta_Y$ is a boundary $\mathbb Q$-divisor 
by the negativity lemma. 
By Theorem \ref{f-thm6.1}, 
the log canonical ring of 
$(Y, \Delta_Y)$ is a finitely generated 
$\mathbb C$-algebra. 
This implies that 
the log canonical ring of $(X, \Delta)$ is a finitely generated 
$\mathbb C$-algebra. 
\end{proof} 

Before we prove Theorem \ref{f-thm6.1}, 
let us recall the following easy well-known lemma for 
the reader's convenience. 

\begin{lem}\label{f-lem6.3}
Let $X$ be an irreducible compact normal analytic space and 
let $\mathcal L$ be a line bundle on $X$ 
such that $\kappa (X, \mathcal L)\leq 1$. 
Then the graded ring 
$$
R(X, \mathcal L):=\bigoplus _{m\geq 0} H^0(X, \mathcal 
L^{\otimes m})
$$ 
is a finitely generated $\mathbb C$-algebra. 
\end{lem}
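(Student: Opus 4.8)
The plan is to split according to the value of $\kappa(X, \mathcal L)\in\{-\infty, 0, 1\}$ and treat each case separately; only the case $\kappa=1$ will be substantial. If $\kappa(X, \mathcal L)=-\infty$, then $H^0(X, \mathcal L^{\otimes m})=0$ for every $m>0$, so $R(X, \mathcal L)=\mathbb C$ and there is nothing to prove.

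Next I would dispose of the case $\kappa(X, \mathcal L)=0$. Here the first step is the standard observation that $\dim_{\mathbb C} H^0(X, \mathcal L^{\otimes m})\leq 1$ for every $m\geq 0$: if some $H^0(X, \mathcal L^{\otimes m})$ contained two linearly independent sections $s_0, s_1$, then $\phi:=s_1/s_0$ would be a nonconstant meromorphic function on the irreducible space $X$, hence transcendental over $\mathbb C$, and the sections $s_0^{k-j}s_1^{j}$ $(0\leq j\leq k)$ would be linearly independent in $H^0(X, \mathcal L^{\otimes mk})$ for all $k$, forcing $\kappa(X, \mathcal L)\geq 1$. Consequently the set $S:=\{\, m\geq 0 : H^0(X, \mathcal L^{\otimes m})\neq 0\,\}$ is a submonoid of $(\mathbb Z_{\geq 0}, +)$, and since $X$ is irreducible and reduced the product of two nonzero sections is nonzero, so $R(X, \mathcal L)$ is, up to rescaling the one-dimensional graded pieces, the semigroup algebra $\mathbb C[S]$. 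As every submonoid of $\mathbb Z_{\geq 0}$ is finitely generated, $R(X, \mathcal L)$ is a finitely generated $\mathbb C$-algebra.

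It remains to treat $\kappa(X, \mathcal L)=1$, and this is where the main work lies. My plan is to reduce to the section ring of a line bundle on a smooth projective curve via the Iitaka fibration. Concretely, I would fix $m_0>0$ for which $\Phi_{|\mathcal L^{\otimes m_0}|}$ has one-dimensional image, resolve its indeterminacy by a proper bimeromorphic $\mu\colon X'\to X$ with $X'$ smooth, and, after Stein factorization and normalization, obtain a fibration $f\colon X'\to B$ onto a smooth projective curve $B$ with connected fibers. Writing $\mathcal M:=\mu^*\mathcal L$, the equality $\mu_*\mathcal O_{X'}=\mathcal O_X$ (as $X$ is normal), together with the projection formula, gives $H^0(X, \mathcal L^{\otimes m})\cong H^0(X', \mathcal M^{\otimes m})$ for all $m$, so $R(X, \mathcal L)\cong R(X', \mathcal M)$. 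The defining property of the Iitaka fibration is that the restriction of $\mathcal M$ to a general fiber $F$ of $f$ has $\kappa(F, \mathcal M|_F)=0$, which bounds $\dim_{\mathbb C} H^0(F, (\mathcal M|_F)^{\otimes m})$ and hence the ranks of the locally free sheaves $f_*\mathcal M^{\otimes m}$ on $B$. Identifying $H^0(X', \mathcal M^{\otimes m})=H^0(B, f_*\mathcal M^{\otimes m})$, I would then extract from the fixed-part decomposition a line bundle $\mathcal A$ of positive degree on $B$ so that, after passing to a suitable Veronese subring, $R(X', \mathcal M)$ is realized as a finitely generated module over the section ring $R(B, \mathcal A)$; finite generation then follows from the classical fact that $\bigoplus_{m\geq 0}H^0(B, \mathcal A^{\otimes m})$ is a finitely generated $\mathbb C$-algebra for every line bundle $\mathcal A$ on a smooth projective curve.

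The hard part will be this last reduction in the compact analytic (non-projective) setting: constructing the fibration $f\colon X'\to B$ from the meromorphic map $\Phi_{|\mathcal L^{\otimes m_0}|}$, verifying that $B$ is genuinely a projective curve and that the sections of $\mathcal M^{\otimes m}$ are controlled by $B$ together with a base locus supported on finitely many fibral components, and matching up the Veronese truncation so that no sections are lost. Once the problem is pushed onto the curve $B$, the finite generation is elementary. I expect one can instead shortcut the entire $\kappa=1$ analysis by citing the relevant statement of Iitaka's theory (see \cite{ueno}), but the fibration reduction above is the self-contained route.
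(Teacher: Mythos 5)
Your proposal is correct and follows essentially the same route as the paper's (sketched) proof: dispose of $\kappa=-\infty$ and $\kappa=0$ by elementary means, and for $\kappa=1$ reduce via the Iitaka fibration to the section ring of an ample line bundle on a smooth projective curve, which is exactly the reduction the paper performs by citing \cite[(1.12) Theorem]{mori}. Your write-up merely supplies more detail (e.g.\ the semigroup-algebra argument in the $\kappa=0$ case) than the paper's sketch.
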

\begin{proof}[Sketch of Proof]
If $\kappa (X, \mathcal L)=-\infty$ or $0$, 
then it is very easy to see that 
$R(X, \mathcal L)$ is a finitely generated 
$\mathbb C$-algebra. 
If $\kappa (X, \mathcal L)=1$, 
then we can 
reduce the problem to the case where $X$ is a smooth 
projective 
curve and $\mathcal L$ is an ample 
line bundle on $X$ by taking the Iitaka fibration 
(see \cite[(1.12) Theorem]{mori}). 
Thus, $R(X, \mathcal L)$ is a finitely generated $\mathbb C$-algebra 
when $\kappa(X, \mathcal L)\leq 1$. 
\end{proof}

Let us prove Theorem \ref{f-thm6.1}. 

\begin{proof}[Proof of Theorem \ref{f-thm6.1}]
By Lemma \ref{f-lem6.3}, 
we may assume that 
$\kappa (X, K_X+\Delta)=2$. 
Then, by Corollary \ref{f-cor4.2}, 
$X$ is projective. 
In this case, the log canonical 
ring 
$$
\bigoplus _{m\geq 0} H^0(X, 
\mathcal O_X(\lfloor m(K_X+\Delta)\rfloor))
$$ 
of $(X, \Delta)$ is a finitely generated 
$\mathbb C$-algebra by the minimal 
model theory for projective 
$\mathbb Q$-factorial 
log surfaces established in \cite{fujino-surfaces}. 
\end{proof}

Let us quickly see some results and conjectures on log canonical 
rings of higher-dimensional 
pairs. 

\begin{thm}[{\cite{bchm}, \cite{fujino-mori}, and 
\cite[Theorem 1.8]{fujino-some}}]\label{f-thm6.4}
Let $(X, \Delta)$ be a kawamata log terminal pair such that 
$\Delta$ is a $\mathbb Q$-divisor 
on $X$ and that 
$X$ is in Fujiki's class $\mathcal C$. 
Then the log canonical ring 
$$
\bigoplus _{m\geq 0} H^0(X, \mathcal O_X(\lfloor m(K_X+\Delta)\rfloor))
$$ 
is a finitely generated $\mathbb C$-algebra. 
\end{thm}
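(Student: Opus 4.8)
The plan is to reduce the statement to the projective setting, where the finite generation of log canonical rings of kawamata log terminal pairs is exactly \cite{bchm}, and to organize the reduction according to the Kodaira dimension $\kappa:=\kappa(X, K_X+\Delta)$. The two extreme values are immediate. When $\kappa=-\infty$ we have $H^0(X, \mathcal O_X(\lfloor m(K_X+\Delta)\rfloor))=0$ for every $m\geq 1$, so the ring is just $\mathbb C$. When $\kappa=\dim X$, the divisor $K_X+\Delta$ is big, so the algebraic dimension satisfies $a(X)=\dim X$ and $X$ is Moishezon; by Definition \ref{h-def2.3} it is then an algebraic space proper over $\mathbb C$, and a resolution of singularities (followed, if necessary, by a further modification as in Moishezon's theorem) provides a smooth projective variety bimeromorphic to $X$. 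Since the log canonical ring of a kawamata log terminal pair is a crepant-birational invariant and since finite generation is unaffected by passing to a Veronese subalgebra, I would transport the problem to this projective model and conclude by \cite{bchm}.

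The real content lies in the intermediate range $0\leq \kappa<\dim X$, which I would treat through the Iitaka fibration. After a bimeromorphic modification of $X$, which preserves the log canonical ring, one obtains a fibration $f\colon X'\to Y$ with $\dim Y=\kappa$. The canonical bundle formula of \cite{fujino-mori} then furnishes a relation $K_{X'}+\Delta'\sim_{\mathbb Q} f^*(K_Y+B_Y+M_Y)$, in which $B_Y$ is the discriminant part, $M_Y$ is the nef moduli part, and $K_Y+B_Y+M_Y$ is big on $Y$. This identifies the log canonical ring of $(X,\Delta)$ with the section ring $R(Y, K_Y+B_Y+M_Y)$. Because $K_Y+B_Y+M_Y$ is big, the base $Y$ is Moishezon, hence may be taken projective, so the problem becomes that of proving finite generation of a big section ring on a projective variety of dimension $\kappa<\dim X$.

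The main obstacle is the moduli part $M_Y$: the divisor $K_Y+B_Y+M_Y$ is the log canonical divisor of a \emph{generalized} kawamata log terminal pair, with $M_Y$ only nef rather than part of a boundary, so \cite{bchm} does not apply verbatim on $Y$. Compounding this, in the intermediate range $X$ itself need not be Moishezon, so the canonical bundle formula and the ensuing finite generation must be realized analytically, or transported onto the Moishezon base $Y$, rather than set up algebraically from the start. These two points are precisely what \cite[Theorem 1.8]{fujino-some} is designed to handle, via the generalized-pair version of the minimal model and finite-generation machinery; I would invoke it at this stage. Two routine verifications remain throughout: the crepant-birational invariance of the log canonical ring under the resolutions and modifications used above, which follows from the standard comparison of round-downs for kawamata log terminal pairs, and the fact that every intermediate space stays in Fujiki's class $\mathcal C$, which is automatic since Moishezon spaces lie in $\mathcal C$ by Remark \ref{f-rem3.2}.
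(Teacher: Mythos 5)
There is nothing in the paper to compare your argument against: Theorem \ref{f-thm6.4} is presented as a quoted result, with the references \cite{bchm}, \cite{fujino-mori}, and \cite[Theorem 1.8]{fujino-some} in its header standing in for a proof, and the surrounding text (``Let us quickly see some results and conjectures\dots'') makes clear that no argument is intended in this paper; indeed the cited \cite[Theorem 1.8]{fujino-some} is essentially this very statement. Your sketch is therefore a reconstruction of the proof living in those references rather than an alternative to anything here, and as such its outline is correct: the trichotomy on $\kappa$, the disposal of $\kappa=-\infty$, the reduction of the big case to \cite{bchm} via Moishezon-ness and the crepant invariance of the ring, and the use of the Iitaka fibration together with the canonical bundle formula of \cite{fujino-mori} in the range $0\leq\kappa<\dim X$ is exactly the intended strategy. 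Two remarks. First, the detour through ``generalized pairs'' is not needed: in the kawamata log terminal case, \cite{fujino-mori} already produces an honest klt pair of log general type on the $\kappa$-dimensional projective base whose log canonical ring has a truncation isomorphic to a truncation of the ring in question, so \cite{bchm} applies directly on the base and the moduli part never has to be carried along as a separate nef summand. Second, the step you correctly flag as the real content --- making the canonical bundle formula and the comparison of section rings work when $X$ is in Fujiki's class $\mathcal C$ but not Moishezon, so that the whole construction cannot simply be set up in the algebraic category --- is precisely what \cite[Theorem 1.8]{fujino-some} supplies; if you wanted a self-contained proof you would have to carry out that transfer in detail, as it is not a routine verification.
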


\begin{conj}\label{f-conj6.5}
Let $(X, \Delta)$ be a log canonical 
pair such that $\Delta$ is a $\mathbb Q$-divisor on $X$ 
and that $X$ is in Fujiki's class $\mathcal C$. 
Then the log canonical ring 
$$
\bigoplus _{m\geq 0} H^0(X, \mathcal O_X(\lfloor m(K_X+\Delta)\rfloor))
$$ 
is a finitely generated $\mathbb C$-algebra. 
\end{conj}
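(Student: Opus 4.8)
The plan is to follow the stratification by the Iitaka dimension $\kappa:=\kappa(X,K_X+\Delta)$ that made the surface case (Theorem \ref{f-thm6.1} and Corollary \ref{f-cor6.2}) work, and in each stratum to reduce to a situation in which finite generation is already available. If $\kappa=-\infty$ the ring equals $\mathbb C$ and there is nothing to prove, so from now on $\kappa\geq 0$. The key preliminary reduction is this: whenever $X$ is Moishezon---in particular whenever $\kappa=\dim X$, since then $K_X+\Delta$ is big so $a(X)=\dim X$---Moishezon's theorem provides a bimeromorphic morphism $f\colon Y\to X$ from a \emph{smooth projective} variety $Y$. Writing $K_Y+\Delta_Y=f^*(K_X+\Delta)$ with $f_*\Delta_Y=\Delta$, the log canonical hypothesis forces $\Delta_Y$ to be a subboundary by the negativity lemma, and the truncated ring computed on $Y$ agrees with the one on $X$, exactly as in the proof of Corollary \ref{f-cor6.2}. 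Hence, for Moishezon $X$, the statement follows from the known finite generation of log canonical rings for projective log canonical pairs (\cite{fujino-mori}, together with \cite{bchm} and Theorem \ref{f-thm6.4} for the underlying kawamata log terminal pieces).

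The genuinely new case is therefore $0\leq\kappa<\dim X$ with $a(X)<\dim X$, i.e.\ $X$ non-algebraic, where one cannot pass to a projective model. Here the natural strategy is to argue through the Iitaka fibration. After a bimeromorphic modification staying in Fujiki's class $\mathcal C$, one produces $h\colon X'\to Z$ with $\dim Z=\kappa$ and, via a canonical bundle formula of Fujino--Mori type, writes $K_{X'}+\Delta'\sim_{\mathbb Q}h^*(K_Z+B_Z+M_Z)$, where $B_Z$ is the discriminant part and $M_Z$ the moduli part, and where $K_Z+B_Z+M_Z$ is big on $Z$. One then descends the log canonical ring of $(X,\Delta)$ to the corresponding (generalized) log canonical ring on $Z$, which now lives in the big range and so is finitely generated by the previous paragraph applied on $Z$. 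In the surface situation this entire step collapsed to Lemma \ref{f-lem6.3}, because $Z$ was a curve and $K_Z+B_Z+M_Z$ an ample line bundle there; that is exactly why Theorem \ref{f-thm6.1} was elementary.

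The hard part will be this intermediate step. It requires the Iitaka-fibration construction, the reduction from log canonical to kawamata log terminal via adjunction, and above all the \emph{canonical bundle formula} to be available not for projective varieties but for spaces in Fujiki's class $\mathcal C$, together with a generalized-pair version of the klt finite generation of Theorem \ref{f-thm6.4} to handle the moduli part $M_Z$. Unlike the klt case, where the minimal model program can be invoked directly, the log canonical case forces a genuine fibration argument over a possibly non-algebraic base $Z$, and controlling $(Z,B_Z+M_Z)$ inside Fujiki's class $\mathcal C$---analytically rather than algebraically---is precisely the obstacle I expect to be the crux, and the reason the statement is recorded here only as a conjecture.
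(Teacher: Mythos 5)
The statement you are addressing is recorded in the paper as Conjecture~\ref{f-conj6.5}, not as a theorem: the paper offers no proof and explicitly remarks that the conjecture ``is still widely open even when $X$ is projective.'' Your proposal correctly senses at the very end that something is genuinely open, but the first half of your argument contains a concrete error that cannot be repaired. You claim that once $X$ is Moishezon (in particular whenever $\kappa(X,K_X+\Delta)=\dim X$) one may pass to a smooth projective model $Y$ with $K_Y+\Delta_Y=f^*(K_X+\Delta)$ and conclude from ``the known finite generation of log canonical rings for projective log canonical pairs,'' citing \cite{fujino-mori}, \cite{bchm}, and Theorem~\ref{f-thm6.4}. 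All three of those results concern \emph{kawamata log terminal} pairs. For genuinely log canonical pairs, finite generation of the log canonical ring is not known for projective varieties in general; as the paper notes right after the conjecture (citing \cite{fujino-gongyo}), the projective case is essentially equivalent to the existence problem of good minimal models in lower dimensions, which is open. Passing to the subboundary $\Delta_Y$ on a projective model does not help: the obstruction lives precisely in the coefficient-one part of the boundary, which survives on $Y$, and the adjunction/restriction arguments needed to handle it lead out of the big range even when $K_X+\Delta$ itself is big. So the Moishezon stratum of your argument is already the open problem, not a solved base case.

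Consequently the later, harder-looking stratum ($0\leq\kappa<\dim X$ with $a(X)<\dim X$) is doubly open: in addition to the difficulties you correctly identify --- availability of the Iitaka fibration, the canonical bundle formula, and a generalized-pair finite generation statement in Fujiki's class $\mathcal C$ --- its endgame feeds back into the unproved projective log canonical case on the base. The only instances the paper actually settles are dimension two (Corollary~\ref{f-cor6.2}, via Theorem~\ref{f-thm6.1}, Lemma~\ref{f-lem6.3}, and the projective surface MMP of \cite{fujino-surfaces}) and the klt case in all dimensions (Theorem~\ref{f-thm6.4}). Your sketch is a reasonable roadmap for how one might eventually attack the conjecture, but it is not a proof, and the assertion that the projective/Moishezon case is already known must be withdrawn.
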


Conjecture \ref{f-conj6.5} is still widely open even when $X$ is projective 
(see \cite{fujino-finite}, \cite{fujino-some}, 
\cite{fujino-gongyo}, \cite{hashizume}, and 
\cite{fujino-liu}). 
When $X$ is projective in Conjecture \ref{f-conj6.5}, 
it is essentially equivalent to the existence problem of 
good minimal models for lower-dimensional 
varieties (for the details, see \cite{fujino-gongyo}). 
Note that Corollary \ref{f-cor6.2} completely settled Conjecture 
\ref{f-conj6.5} in dimension two. 

\medskip 

We close this section with a naive question. 

\begin{question}\label{f-que6.6} 
Let $X$ be an irreducible 
compact normal analytic surface such that 
$K_X$ is $\mathbb Q$-Cartier. 
Then is the canonical ring 
$$
\bigoplus _{m\geq 0} H^0(X, \mathcal O_X(mK_X))
$$ 
a finitely generated $\mathbb C$-algebra? 
\end{question}

We do not know the answer even when $X$ is projective. 

\section{Abundance theorem}\label{h-sec7} 

In this section, we prove the abundance theorem 
for $\mathbb Q$-factorial log surfaces in Fujiki's class $\mathcal C$. 

\medskip 

Let us start with the non-vanishing theorem. 

\begin{thm}[Non-vanishing theorem]\label{h-thm7.1}
Let $(X, \Delta)$ be a $\mathbb Q$-factorial log surface in Fujiki's class 
$\mathcal C$. 
Assume that 
$(K_X+\Delta)\cdot C\geq 0$ for every curve 
$C$ on $X$. 
Then we have $\kappa (X, K_X+\Delta)\geq 0$. 
\end{thm}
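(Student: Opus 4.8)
The plan is to prove the non-vanishing theorem
$$
\kappa(X, K_X+\Delta)\geq 0
$$
for a $\mathbb Q$-factorial log surface $(X,\Delta)$ in Fujiki's class $\mathcal C$ with $K_X+\Delta$ nef. My first move is to reduce the analytic problem to the projective case, where the non-vanishing theorem is already available from the minimal model theory of \cite{fujino-surfaces}. By Lemma \ref{f-lem4.3}, if $\kappa(X, K_X+\Delta)=-\infty$ then $X$ is projective, so I may as well assume we are not in that situation and try to rule it out directly; alternatively, and more cleanly, I would split according to the algebraic dimension $a(X)$. If $a(X)=2$, then $X$ is Moishezon, hence projective by Lemma \ref{f-lem4.1} (using $\mathbb Q$-factoriality), and the statement follows immediately from the projective non-vanishing theorem in \cite{fujino-surfaces}. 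So the real content lies in the cases $a(X)\in\{0,1\}$, where $X$ is genuinely non-algebraic.

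The next step is to pass to the minimal resolution $f\colon Y\to X$ and write $K_Y+\Delta_Y=f^*(K_X+\Delta)$. By the negativity lemma (Lemma \ref{h-lem2.6}) $\Delta_Y$ is a boundary $\mathbb Q$-divisor, and since $f$ is bimeromorphic we have $\kappa(Y, K_Y+\Delta_Y)=\kappa(X, K_X+\Delta)$; moreover $Y$ is a compact K\"ahler surface by Corollary \ref{f-cor3.4}. This reduces everything to a non-vanishing statement on a smooth compact K\"ahler surface $Y$ with even first Betti number. The plan now is to invoke the Enriques--Kodaira classification (see \cite[Chapter VI]{bhpv}): from $\kappa(Y,K_Y+\Delta_Y)=-\infty$ one deduces $\kappa(Y,K_Y)=-\infty$ (since $K_Y\leq K_Y+\Delta_Y$), so $Y$ is either rational or ruled over a curve of positive genus. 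In either case $Y$ is projective, forcing $X$ projective by Lemma \ref{f-lem4.1}, and we are back in the projective setting.

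Thus the strategy is essentially a dichotomy: either I can directly show $\kappa\geq 0$, or the assumption $\kappa=-\infty$ propagates (through the minimal resolution and the classification) to projectivity of $X$, at which point \cite{fujino-surfaces} finishes the argument. I would phrase this as: suppose for contradiction $\kappa(X, K_X+\Delta)=-\infty$; by the argument above $X$ is projective; but then the projective non-vanishing theorem of \cite{fujino-surfaces}, applied to the nef divisor $K_X+\Delta$, yields $\kappa(X, K_X+\Delta)\geq 0$, a contradiction.

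\emph{The main obstacle} I anticipate is not the projective input but the bookkeeping that lets us cite it: one must verify that the hypothesis ``$(K_X+\Delta)\cdot C\geq 0$ for every curve $C$'' descends appropriately through the various reductions, and in particular that in the non-algebraic cases $a(X)\in\{0,1\}$ the surface really is forced to be projective rather than sitting in some exotic non-algebraic corner of the K\"ahler classification. Remark \ref{h-rem2.5} (finiteness of curves when $a(Y)=0$) is likely the right tool to control the $a(X)=0$ case and to ensure the classification argument closes. The abundance theorem (Theorem \ref{f-thm1.2}) and the remainder of Theorem \ref{f-thm1.1} then build on this non-vanishing statement, so it is worth isolating the projectivity reduction as the crux.
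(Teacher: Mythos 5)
Your proposal is correct and follows essentially the same route as the paper: pass to the minimal resolution, use effectivity of $\Delta_Y$ (negativity lemma) to reduce the problematic case to $\kappa(Y,K_Y)=-\infty$, invoke the Enriques--Kodaira classification (via Lemma \ref{f-lem4.3}) to force $Y$, hence $X$ (Lemma \ref{f-lem4.1}), to be projective, and conclude by the projective non-vanishing theorem of \cite{fujino-surfaces}. The only quibble is that the negativity lemma gives $\Delta_Y$ \emph{effective}, not necessarily a boundary (coefficients may exceed one since $(X,\Delta)$ is not assumed log canonical), but the argument only uses effectivity, so nothing breaks.
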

\begin{proof}
Let $f\colon Y\to X$ be the minimal resolution. 
We put $K_Y+\Delta_Y:=f^*(K_X+\Delta)$. 
Then $\Delta_Y$ is an effective $\mathbb Q$-divisor 
by the negativity lemma. 
If $\kappa (Y, K_Y)\geq 0$, 
then we have $$
\kappa (X, K_X+\Delta)=\kappa (Y, K_Y+\Delta_Y)\geq 
\kappa (Y, K_Y)\geq 0. 
$$ 
Therefore, from now on, 
we assume that $\kappa (Y, K_Y)=-\infty$. 
By Lemma \ref{f-lem4.3}, $Y$ is projective. 
Therefore, 
by Lemma \ref{f-lem4.1}, 
$X$ is projective since $X$ is $\mathbb Q$-factorial by 
assumption. 
Thus, by \cite[Theorem 5.1]{fujino-surfaces}, 
we get $\kappa(X, K_X+\Delta)\geq 0$. 
\end{proof}

The following theorem is the main result of this section, 
which is the abundance theorem for $\mathbb Q$-factorial 
log surfaces in Fujiki's class $\mathcal C$. 

\begin{thm}[Abundance theorem for 
$\mathbb Q$-factorial log surfaces in Fujiki's class 
$\mathcal C$]\label{h-thm7.2}
Let $(X, \Delta)$ be a $\mathbb Q$-factorial 
log surface in Fujiki's class $\mathcal C$. 
Assume that 
$(K_X+\Delta)\cdot C\geq 0$ for 
every curve $C$ on $X$. 
Then $K_X+\Delta$ is semi-ample. 
\end{thm}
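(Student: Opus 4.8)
The plan is to split according to the value of $\kappa:=\kappa(X,K_X+\Delta)$, which is $\geq 0$ by the non-vanishing theorem (Theorem \ref{h-thm7.1}), so that $\kappa\in\{0,1,2\}$. The overarching idea is to reduce to the already established abundance theorem for projective $\mathbb Q$-factorial log surfaces in \cite{fujino-surfaces} whenever $X$ is projective, and to treat the genuinely non-projective cases by passing to the minimal resolution and invoking the Enriques--Kodaira classification.

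First I would dispose of the case $\kappa=2$: here $K_X+\Delta$ is big, so $X$ is projective by Corollary \ref{f-cor4.2}, and the projective abundance theorem of \cite{fujino-surfaces} applies verbatim to the nef divisor $K_X+\Delta$. More generally, if $X$ happens to be projective for any value of $\kappa$ --- equivalently, if $a(X)=2$, since $X$ is $\mathbb Q$-factorial (Lemma \ref{f-lem4.1}) --- I would again simply quote \cite{fujino-surfaces}. Thus the whole content lies in the case where $X$ is not projective; then $X$ is not Moishezon, so $a(X)\leq 1$ and hence $\kappa\leq a(X)\leq 1$. Combined with non-vanishing, this leaves only $\kappa\in\{0,1\}$.

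To handle those two cases I would pass to the minimal resolution $f\colon Y\to X$ and write $K_Y+\Delta_Y=f^*(K_X+\Delta)$ with $\Delta_Y$ effective (negativity lemma). Since $\mathcal O_Y(m(K_Y+\Delta_Y))\simeq f^*\mathcal O_X(m(K_X+\Delta))$ for divisible $m$ and $f_*\mathcal O_Y=\mathcal O_X$, global generation of the former descends to that of the latter, so it suffices to prove that $K_Y+\Delta_Y$ is semi-ample on the compact K\"ahler surface $Y$ (Corollary \ref{f-cor3.4}). Because $Y$ is non-projective we have $\kappa(Y,K_Y)\neq-\infty$ (otherwise the argument of Lemma \ref{f-lem4.3} would make $Y$, and hence $X$, projective), so $Y$ falls into one of the non-algebraic classes of the Enriques--Kodaira classification with $\kappa(Y,K_Y)\in\{0,1\}$. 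In each class I would analyze $K_Y+\Delta_Y$ directly: when $\kappa=0$ the section ring is finitely generated (Lemma \ref{f-lem6.3}) and the nef divisor $K_Y+\Delta_Y$ should be forced to be torsion, so that some multiple is linearly trivial and thus semi-ample; when $\kappa=1$ the Iitaka map should realize $K_Y+\Delta_Y$ as pulled back from a curve along an elliptic-type fibration, from which base-point-freeness of a suitable multiple follows.

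The main obstacle is exactly this last step --- the non-projective cases $\kappa\in\{0,1\}$ --- where the projective minimal model program and its contraction and basepoint-free theorems are unavailable. Finite generation of the log canonical ring (Lemma \ref{f-lem6.3}) is by itself not enough, since a nef divisor with $\kappa=0$ need not be semi-ample (a non-torsion degree-zero class on a curve already fails). The crux is therefore to upgrade the numerical information to genuine semi-ampleness: ruling out the non-abundant behaviour $\kappa<\nu$ and the torsion pathology, which is where the K\"ahler hypothesis (class $\mathcal C$) and the precise classification of $Y$, together with careful control of the boundary $\Delta_Y$, must be brought to bear.
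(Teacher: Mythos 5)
Your reduction is sound as far as it goes: the split by $\kappa$, the disposal of $\kappa=2$ (and of any projective $X$) via Corollary \ref{f-cor4.2} and \cite{fujino-surfaces}, the observation that a non-projective $X$ forces $\kappa\leq a(X)\leq 1$, and the descent of semi-ampleness from the minimal resolution all match the paper's strategy. But the proposal stops exactly where the theorem's content begins, and you say so yourself: for $\kappa=0$ the divisor ``should be forced to be torsion'' and for $\kappa=1$ the Iitaka map ``should realize'' the divisor as a pull-back. Neither ``should'' is justified, so there is a genuine gap in both remaining cases.

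For $\kappa=1$ the missing ingredient is comparatively mild: the paper invokes Lemma \ref{h-lem7.3} (Fujita's theorem, via Zariski's lemma), which asserts that a nef line bundle of Iitaka dimension one on \emph{any} compact normal analytic surface is semi-ample --- no projectivity, no K\"ahler hypothesis, and no passage to the minimal resolution is needed. Your sketch via the Iitaka fibration is pointing at the right statement but does not prove it. The serious gap is $\kappa=0$. The paper does not argue via finite generation or torsion of a numerical class; instead it adapts the proof of \cite[Theorem 6.2]{fujino-surfaces}: one passes to a smooth model $S$ carrying an effective member $Z\in |m(K_S+\Delta_S)|$, uses Mumford's theory of indecomposable curves of canonical type to get $Z^2=0$, observes that a non-projective $S$ in class $\mathcal C$ with $\kappa(S,K_S)=0$ and no $(-1)$-curves must be a $K3$ surface or a complex torus (Lemma \ref{h-lem7.4}), and then applies Lemma \ref{h-lem7.5} (any non-zero effective divisor on a two-dimensional torus has $\kappa\geq 1$) and Lemma \ref{h-lem7.6} (any non-zero effective divisor with self-intersection zero on a $K3$ has $\kappa\geq 1$, by Riemann--Roch) to contradict $\kappa=0$ unless $Z=0$; then $K_X+\Delta\sim_{\mathbb Q}0$. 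This contradiction mechanism --- the pair of lemmas on tori and $K3$s combined with $Z^2=0$ --- is the idea your proposal is missing, and without it the ``torsion pathology'' you correctly flag cannot be excluded.
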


For the proof of Theorem \ref{h-thm7.2}, 
we prepare some easy lemmas. 

\begin{lem}\label{h-lem7.3}
Let $X$ be a compact normal analytic surface and 
let $\mathcal L$ be a line bundle on $X$ such that 
$\mathcal L\cdot C\geq 0$ for 
every curve $C$ on $X$. 
Assume that $\kappa (X, \mathcal L)=1$. 
Then $\mathcal L$ is semi-ample. 
\end{lem}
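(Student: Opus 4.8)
The plan is to reduce to the relative geometry of the Iitaka fibration and to exploit the nef hypothesis fibre by fibre, deliberately avoiding the Hodge index theorem and the Kleiman--Mori cone, which are unavailable since $X$ need not be Moishezon. First I would record that, by Lemma \ref{f-lem6.3}, the ring $R(X,\mathcal L)$ is finitely generated, so it suffices to produce a single power $\mathcal L^{\otimes k}$ that is globally generated. Replacing $\mathcal L$ by a positive power (which affects neither the hypotheses nor the conclusion) I may assume that $|\mathcal L|$ induces the Iitaka fibration, a rational map with one-dimensional image. I would then choose a resolution $g\colon \widetilde X\to X$ from a smooth compact surface such that the movable part of $|g^{*}\mathcal L|$ is base point free and defines a morphism $\widetilde f\colon \widetilde X\to B$ with connected fibres onto a smooth projective curve $B$, and write $g^{*}\mathcal L\sim \widetilde f^{*}A+E$, where $A$ is a divisor of positive degree on $B$ and $E\geq 0$ is the fixed part. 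Note that $g^{*}\mathcal L$ is again nef and that $\kappa(\widetilde X,g^{*}\mathcal L)=\kappa(X,\mathcal L)=1$, since $g_{*}\mathcal O_{\widetilde X}=\mathcal O_X$ by normality of $X$.

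Next I would pin down the numerical behaviour of $g^{*}\mathcal L$. As $g^{*}\mathcal L$ is nef and $\kappa=1<2$, it cannot be big, so $(g^{*}\mathcal L)^{2}=0$; here Riemann--Roch on the smooth surface $\widetilde X$ shows that a nef class of positive self-intersection has $\kappa=2$ (for large $k$ one has $h^{2}(kD)=h^{0}(K_{\widetilde X}-kD)=0$ by nefness, whence $h^{0}(kD)\geq\chi$ grows quadratically). Writing $\widetilde F$ for a general fibre and expanding $(g^{*}\mathcal L)^{2}=g^{*}\mathcal L\cdot(\widetilde f^{*}A+E)=\deg A\,(g^{*}\mathcal L\cdot\widetilde F)+g^{*}\mathcal L\cdot E=0$ with both summands nonnegative, I get $g^{*}\mathcal L\cdot\widetilde F=0$ and $g^{*}\mathcal L\cdot E=0$; since every fibre is numerically $\widetilde F$, this forces $g^{*}\mathcal L\cdot C=0$ for every irreducible component $C$ of every fibre. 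Restricting $g^{*}\mathcal L\sim\widetilde f^{*}A+E$ to a general smooth fibre shows $g^{*}\mathcal L|_{\widetilde F}\cong\mathcal O_{\widetilde F}(E|_{\widetilde F})$ has degree $0$; as $E|_{\widetilde F}$ is effective, it vanishes, so $E$ is vertical.

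Finally I would convert this into semi-ampleness. From $g^{*}\mathcal L\cdot C=0$ and $\widetilde f^{*}A\cdot C=0$ for every fibre component $C$ I obtain $E\cdot C=0$ for all such $C$. Zariski's lemma---valid on any compact complex surface carrying a fibration with connected fibres over a curve, as it uses only that distinct fibres are numerically equivalent and the ensuing negative semidefiniteness of the fibre intersection form---then forces $E=\widetilde f^{*}D_B$ for an effective $\mathbb Q$-divisor $D_B$ on $B$. Hence $g^{*}\mathcal L\sim_{\mathbb Q}\widetilde f^{*}(A+D_B)$ with $A+D_B$ of positive degree, i.e.\ ample on the curve $B$, so a suitable power satisfies $g^{*}\mathcal L^{\otimes k}\cong\widetilde f^{*}\mathcal M$ for a line bundle $\mathcal M$ of positive degree on $B$, which is globally generated. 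Because $g_{*}\mathcal O_{\widetilde X}=\mathcal O_X$ and $H^{0}(\widetilde X,g^{*}\mathcal L^{\otimes k})=H^{0}(X,\mathcal L^{\otimes k})$, global generation descends to $X$ (surjectivity of the evaluation map $H^{0}(X,\mathcal L^{\otimes k})\otimes\mathcal O_X\to\mathcal L^{\otimes k}$ can be tested after the surjective pullback $g^{*}$, whose cokernel vanishes), so $\mathcal L^{\otimes k}$ is globally generated and $\mathcal L$ is semi-ample.

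I expect the main obstacle to lie in the third step: securing the purely numerical input---Zariski's lemma and the negative semidefiniteness of the fibre components---in the non-algebraic setting, precisely where the Hodge index theorem and the machinery of $\NE(X)$ are not at our disposal. Everything else is essentially formal once the Iitaka fibration $\widetilde f$ has been set up and the identity $(g^{*}\mathcal L)^{2}=0$ has been established.
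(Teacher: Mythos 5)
Your argument is correct and is essentially the paper's own proof written out in full: the paper disposes of this lemma by citing Zariski's lemma \cite[Chapter III, (8.2) Lemma]{bhpv} and \cite[(4.1) Theorem]{fujita}, and your reduction to the Iitaka fibration, the computation $(g^{*}\mathcal L)^{2}=0$ via Riemann--Roch and nefness, and the use of Zariski's lemma to show the fixed part is a pullback from the base are exactly the content of that reference. The point you flag as the main potential obstacle---the validity of Zariski's lemma off the algebraic category---is in fact harmless, since the version in \cite{bhpv} is stated and proved for arbitrary fibrations of compact complex surfaces over curves, which is precisely why the paper can invoke it here.
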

\begin{proof}
This is an easy consequence of Zariski's lemma (see 
\cite[Chapter III, (8.2) Lemma]{bhpv}). 
For the details, see \cite[(4.1) Theorem]{fujita}. 
\end{proof}

\begin{lem}\label{h-lem7.4}
Let $S$ be a compact smooth analytic 
surface in Fujiki's class $\mathcal C$ 
with $\kappa (S, K_S)=0$. 
Assume that the algebraic dimension $a(S)$ of $S$ is less than 
two. 
Then $S$ is bimeromorphically equivalent to a $K3$ surface 
or a two-dimensional complex torus. 
\end{lem}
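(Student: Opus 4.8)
The plan is to reduce the statement entirely to the Enriques--Kodaira classification of compact complex surfaces, using the hypotheses only to eliminate classes from the classification table. First I would record that, because $S$ is a smooth surface in Fujiki's class $\mathcal C$, it is automatically K\"ahler: applying Corollary \ref{f-cor3.4} to the identity resolution of the smooth surface $S$ shows $S$ is K\"ahler, and by the criterion recalled in the proof of Lemma \ref{f-lem3.3} this is equivalent to the first Betti number $b_1(S)$ being even. The three relevant quantities now propagate under bimeromorphic modification: the Kodaira dimension $\kappa(S, K_S)$ and the algebraic dimension $a(S)$ are bimeromorphic invariants, and the parity of $b_1$ is preserved under blow-ups and blow-downs of smooth surfaces.

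Next, since $\kappa(S, K_S)=0\geq 0$ the surface $S$ is non-ruled, so it admits a unique smooth minimal model $S_{\min}$, obtained by successively contracting $(-1)$-curves. By the previous paragraph $S_{\min}$ again has $b_1$ even (hence is K\"ahler, hence lies in Fujiki's class $\mathcal C$), has $\kappa=0$, and satisfies $a(S_{\min})<2$, i.e.\ is non-Moishezon. Since $S$ is bimeromorphically equivalent to $S_{\min}$, it suffices to identify $S_{\min}$.

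The heart of the argument is then to invoke the Enriques--Kodaira classification (see \cite[Chapter VI]{bhpv}): a minimal compact complex surface with $\kappa=0$ is a $K3$ surface, an Enriques surface, a two-dimensional complex torus, a bielliptic (hyperelliptic) surface, or a primary or secondary Kodaira surface. I would discard the Kodaira surfaces using the K\"ahler hypothesis, since the primary and secondary Kodaira surfaces have \emph{odd} first Betti number ($b_1=3$ and $b_1=1$ respectively) and so are not K\"ahler, contradicting that $b_1(S_{\min})$ is even. I would discard the Enriques and bielliptic surfaces using the algebraic-dimension hypothesis, since both are always projective, hence Moishezon with $a=2$, contradicting $a(S_{\min})<2$. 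What remains is exactly $K3$ surfaces and two-dimensional complex tori, so $S_{\min}$, and therefore $S$, is bimeromorphically equivalent to one of these.

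The only genuine content is the classification table together with the two bookkeeping facts just used, namely that Kodaira surfaces have odd $b_1$ and that Enriques and bielliptic surfaces are automatically algebraic. Consequently I do not expect a substantive obstacle; the one point requiring care is simply to have the correct list of minimal surfaces with $\kappa=0$, along with their Betti numbers and algebraic dimensions, so that the two hypotheses cleanly eliminate all classes except $K3$ surfaces and complex tori.
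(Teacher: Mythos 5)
Your proposal is correct and is essentially the paper's own argument: the paper's proof consists precisely of noting that $b_1(S)$ is even because $S$ is in Fujiki's class $\mathcal C$ and then invoking the Enriques--Kodaira classification together with $\kappa(S,K_S)=0$, and your write-up simply makes explicit the case elimination (Kodaira surfaces via odd $b_1$, Enriques and bielliptic surfaces via $a(S)<2$) that the paper leaves implicit. No gap.
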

\begin{proof}
Since $S$ is in Fujiki's class $\mathcal C$, 
the first Betti number $b_1(S)$ of $S$ is 
even. 
Then the Enriques--Kodaira classification 
(see \cite[Chapter VI]{bhpv}) 
and $\kappa (S, K_S)=0$ give the 
desired statement. 
\end{proof}

\begin{lem}\label{h-lem7.5}
Let $B$ be a non-zero effective divisor on a two-dimensional 
complex torus $S$. Then we have $\kappa (S, B)\geq 1$. 
\end{lem}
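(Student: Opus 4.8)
The plan is to split according to the self-intersection number $B^2$, after first establishing that $B^2\geq 0$ and analyzing the components of $B$.

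First I would record that $S$ carries no rational curves: any holomorphic map $\mathbb P^1\to S$ lifts to the universal cover $\mathbb C^2$ and is therefore constant. Since $K_S\sim 0$, adjunction gives $C^2=2p_a(C)-2$ for every irreducible curve $C$ on $S$, so $p_a(C)\geq 1$ and $C^2\geq 0$; moreover $C^2=0$ forces $p_a(C)=1$, and as $C$ admits no rational normalization, $C$ is then a smooth elliptic curve. Writing $B=\sum_i a_iC_i$ with $a_i>0$ and using $C_i\cdot C_j\geq 0$ for $i\neq j$, I get $B^2=\sum_{i,j}a_ia_j(C_i\cdot C_j)\geq 0$, with equality forcing $C_i^2=0$ and $C_i\cdot C_j=0$ for all $i,j$.

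In the case $B^2>0$ I would invoke Riemann--Roch on the torus. Since $K_S\sim 0$ and $\chi(\mathcal O_S)=0$, we have $\chi(S,\mathcal O_S(mB))=\tfrac{1}{2}(mB)^2=\tfrac{m^2}{2}B^2$. For $m\geq 1$, Serre duality gives $h^2(S,\mathcal O_S(mB))=h^0(S,\mathcal O_S(-mB))=0$, because a nonzero effective divisor on $S$ is never linearly equivalent to a negative one (otherwise $mB+D\sim 0$ with $mB+D$ effective and nonzero, which is impossible). Hence $h^0(S,\mathcal O_S(mB))\geq \chi(S,\mathcal O_S(mB))=\tfrac{m^2}{2}B^2$, which grows quadratically, so $\kappa(S,B)=2$. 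In the case $B^2=0$, any irreducible component $C$ of $B$ is a smooth elliptic curve with $C^2=0$. The key structural input is that $C$ is a translate of a one-dimensional subtorus: after translating $S$ so that a point of $C$ maps to the origin, the embedding $C\hookrightarrow S$ is a homomorphism of complex tori, since a holomorphic map of complex tori fixing the origin lifts to a $\mathbb C$-linear map of universal covers. Its image $S_0$ is then a one-dimensional subtorus and $C$ is a translate of $S_0$. Consequently the quotient $q\colon S\to T:=S/S_0$ is a holomorphic fiber bundle onto a one-dimensional complex torus, $C$ is a (reduced, irreducible) fiber, and $\mathcal O_S(C)\simeq q^*\mathcal O_T(q(C))$. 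As a point on the elliptic curve $T$ has Iitaka dimension $1$, pulling back sections yields $h^0(S,\mathcal O_S(mC))\geq h^0(T,\mathcal O_T(m\,q(C)))=m$ for $m\geq 1$. Writing $B=C+B'$ with $B'\geq 0$ and multiplying by a fixed section of $\mathcal O_S(mB')$ embeds $H^0(S,\mathcal O_S(mC))$ into $H^0(S,\mathcal O_S(mB))$, so $\kappa(S,B)\geq\kappa(S,C)\geq 1$.

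The main obstacle is the case $B^2=0$, and within it the identification of the elliptic component $C$ with a translate of a one-dimensional subtorus; once the resulting elliptic fibration $q\colon S\to T$ is available, the estimate is immediate. I would take care that the fiber of $q$ through $C$ is reduced and irreducible—which holds because $q$ is a fiber bundle with fiber $S_0$—so that the identification $\mathcal O_S(C)\simeq q^*\mathcal O_T(\mathrm{pt})$ is exact rather than merely up to an effective correction.
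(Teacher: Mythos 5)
Your proof is correct, and it coincides with the paper's argument only in the subtorus case. The paper first reduces to $B$ irreducible and then splits according to whether $B$ is an elliptic curve: if it is not, it invokes Ueno's Lemma 10.8 to conclude that $S$ is an abelian surface and then uses the classical fact that $|2B|$ is basepoint-free there; if it is, it translates $B$ to a complex subtorus and pulls back a point from the quotient $S/B$, exactly as you do in your $B^2=0$ case. You instead split on the sign of $B^2$, after using $K_S\sim 0$, adjunction, and the absence of rational curves on a torus to show $B^2\geq 0$ and to identify every component in the degenerate case as a smooth elliptic curve; you then dispose of the case $B^2>0$ by Riemann--Roch and Serre duality, obtaining quadratic growth of $h^0(S,\mathcal O_S(mB))$ directly (indeed $\kappa=2$ there). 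Your route is more self-contained --- it avoids both the citation to Ueno and the Lefschetz-type basepoint-freeness statement on abelian surfaces --- at the cost of a somewhat longer preliminary analysis via adjunction; the paper's route is shorter on the page because it outsources those two facts to the literature. Both arguments are complete.
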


\begin{proof}
Without loss of generality, we may assume that 
$B$ is an irreducible curve on $S$. 
If $B$ is not an elliptic curve, then we can 
see that $S$ is an Abelian surface 
(see \cite[Lemma 10.8]{ueno}). 
In this case, it is well known that 
$|2B|$ is basepoint-free. In particular, 
$\kappa (S, B)\geq 1$. 
Therefore, from now on, we assume that 
$B$ is an elliptic curve. 
By taking a suitable translation, we may further 
assume that $B$ is a complex subtorus of $S$. 
We set $A=S/B$. 
Let $p\colon S\to A$ be the canonical quotient map. 
Then $B=p^*P$ holds for $P=p(B)\in A$. 
Therefore, we obtain $\kappa (S, B)=\kappa (A, P)=1$. 
Hence, we always have $\kappa (S, B)\geq 1$. 
\end{proof}

\begin{lem}\label{h-lem7.6}
Let $S$ be a $K3$ surface and let 
$B$ be a non-zero effective divisor on $S$ such that $B^2=0$. 
Then we have $\kappa (S, B)\geq 1$. 
\end{lem}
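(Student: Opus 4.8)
The plan is to read off $h^0(S, \mathcal{O}_S(B)) \geq 2$ from Riemann--Roch, using that a $K3$ surface satisfies $K_S \sim 0$ and $\chi(S, \mathcal{O}_S) = 2$, and then to deduce $\kappa(S, B) \geq 1$ from the existence of a pencil. First I would apply the Hirzebruch--Riemann--Roch formula for a line bundle on a compact complex surface,
$$
\chi(S, \mathcal{O}_S(B)) = \chi(S, \mathcal{O}_S) + \tfrac{1}{2} B \cdot (B - K_S).
$$
Since $K_S \sim 0$ and $B^2 = 0$ by assumption, the intersection term vanishes, and since $h^0(\mathcal{O}_S) = 1$, $h^1(\mathcal{O}_S) = 0$, and $h^2(\mathcal{O}_S) = h^0(K_S) = 1$ for a $K3$ surface, we obtain $\chi(S, \mathcal{O}_S(B)) = 2$. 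I would emphasize that both Riemann--Roch and this computation of $\chi(\mathcal{O}_S)$ are valid for an arbitrary compact complex $K3$ surface, with no algebraic dimension hypothesis, which is exactly the generality we need here.

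Next I would control the top cohomology. By Serre duality together with $K_S \sim 0$, we have $h^2(S, \mathcal{O}_S(B)) = h^0(S, \mathcal{O}_S(-B))$. Because $B$ is a non-zero effective divisor, $-B$ is not linearly equivalent to any effective divisor: otherwise the sum of $B$ with such an effective divisor would be a non-zero effective divisor that is linearly trivial, contradicting the fact that $\mathcal{O}_S$ admits a nowhere-vanishing section. Hence $h^2(S, \mathcal{O}_S(B)) = 0$, and therefore
$$
h^0(S, \mathcal{O}_S(B)) = 2 + h^1(S, \mathcal{O}_S(B)) \geq 2.
$$

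Finally, from $h^0(S, \mathcal{O}_S(B)) \geq 2$ I would conclude $\kappa(S, B) \geq 1$. Choosing two linearly independent sections $s_0, s_1 \in H^0(S, \mathcal{O}_S(B))$, the ratio $s_0/s_1$ is a non-constant meromorphic function on $S$, so the $m+1$ products $s_0^i s_1^{m-i}$ with $0 \leq i \leq m$ are linearly independent sections of $\mathcal{O}_S(mB)$; thus $h^0(S, \mathcal{O}_S(mB)) \geq m+1$ grows without bound, which is precisely $\kappa(S, B) \geq 1$. The argument is largely formal once Riemann--Roch is in hand; the only points demanding a little care are the vanishing $h^0(\mathcal{O}_S(-B)) = 0$ and the passage from a pencil to positive Iitaka dimension, neither of which uses projectivity of $S$.
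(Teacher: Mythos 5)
Your proof is correct and follows essentially the same route as the paper: Riemann--Roch on the $K3$ surface gives $\chi(S,\mathcal O_S(B))=\chi(S,\mathcal O_S)=2$, Serre duality with $K_S\sim 0$ kills $h^2$ because $B$ is non-zero effective, so $h^0(S,\mathcal O_S(B))\geq 2$ and hence $\kappa(S,B)\geq 1$. The only difference is that you spell out the final step (a pencil forces positive Iitaka dimension), which the paper leaves implicit.
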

\begin{proof}
By the Riemann--Roch formula, 
$$
\dim H^0(S, \mathcal O_S(B))+\dim H^2(S, \mathcal O_S(B))\geq 
\chi (S, \mathcal O_S)=2. 
$$
By Serre duality, 
$$
H^2(S, \mathcal O_S(B))\simeq 
H^0(S, \mathcal O_S(-B)). 
$$ 
Since $B$ is a non-zero effective divisor on $S$, 
$H^0(S, \mathcal O_S(-B))=0$ and $\dim H^0(S, \mathcal O_S(B))\geq 2$. 
Therefore, we have $\kappa (S, B)\geq 1$. 
\end{proof}

Before we prove Theorem \ref{h-thm7.2}, 
we explicitly state the abundance theorem for 
log canonical surfaces in Fujiki's class $\mathcal C$. 

\begin{cor}[Abundance theorem for 
log canonical surfaces in Fujiki's class $\mathcal C$]\label{h-cor7.7}
Let $(X, \Delta)$ be a log canonical surface in 
Fujiki's class $\mathcal C$. 
Assume that 
$(K_X+\Delta)\cdot C\geq 0$ for every 
curve $C$ on $X$. 
Then $K_X+\Delta$ is semi-ample. 
\end{cor}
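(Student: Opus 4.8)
The plan is to reduce to the already-established $\mathbb Q$-factorial case, namely Theorem \ref{h-thm7.2}, by passing to the minimal resolution, and then to descend semi-ampleness back down to $X$. First I would take the minimal resolution $f\colon Y\to X$ and write $K_Y+\Delta_Y:=f^*(K_X+\Delta)$ with $f_*\Delta_Y=\Delta$. Exactly as in the proof of Corollary \ref{f-cor6.2}, since $(X,\Delta)$ is log canonical the negativity lemma shows that $\Delta_Y$ is a boundary $\mathbb Q$-divisor; in particular $\Delta_Y$ is effective with coefficients in $[0,1]$, so $(Y,\Delta_Y)$ is a log surface. Since $Y$ is smooth it is $\mathbb Q$-factorial, and by Corollary \ref{f-cor3.4} the surface $Y$ is a compact K\"ahler manifold, so $(Y,\Delta_Y)$ is a $\mathbb Q$-factorial log surface in Fujiki's class $\mathcal C$.

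Next I would check that $K_Y+\Delta_Y=f^*(K_X+\Delta)$ is nef. For any curve $C$ on $Y$ we have $(K_Y+\Delta_Y)\cdot C=f^*(K_X+\Delta)\cdot C$. If $C$ is $f$-exceptional this intersection number is zero, and otherwise $f_*C$ is a positive multiple of the curve $f(C)$ on $X$, so the number equals $(K_X+\Delta)\cdot f_*C\geq 0$ by the nefness hypothesis on $K_X+\Delta$. Hence $K_Y+\Delta_Y$ is nef, and Theorem \ref{h-thm7.2} applies to the $\mathbb Q$-factorial pair $(Y,\Delta_Y)$: the divisor $K_Y+\Delta_Y$ is semi-ample.

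Finally I would descend semi-ampleness along $f$. Choose a positive integer $m$ with $m(K_X+\Delta)$ Cartier and set $\mathcal M:=\mathcal O_X(m(K_X+\Delta))$, so that $f^*\mathcal M\simeq \mathcal O_Y(m(K_Y+\Delta_Y))$ is semi-ample; thus $f^*\mathcal M^{\otimes N}$ is globally generated for some $N>0$. Since $f$ is bimeromorphic and $X$ is normal we have $f_*\mathcal O_Y\simeq \mathcal O_X$, so the projection formula gives $H^0(Y, f^*\mathcal M^{\otimes N})\simeq H^0(X, \mathcal M^{\otimes N})$; that is, every global section of $f^*\mathcal M^{\otimes N}$ is the pullback of a section of $\mathcal M^{\otimes N}$. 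Because the fiber map $\mathcal M^{\otimes N}\otimes k(x)\to f^*\mathcal M^{\otimes N}\otimes k(y)$ is an isomorphism for $y\in f^{-1}(x)$, global generation of $f^*\mathcal M^{\otimes N}$ forces global generation of $\mathcal M^{\otimes N}$, and therefore $K_X+\Delta$ is semi-ample. The only step that needs a little care is this last descent, but it is routine once one knows $f_*\mathcal O_Y\simeq \mathcal O_X$; the substantive work is entirely contained in Theorem \ref{h-thm7.2}.
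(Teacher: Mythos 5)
Your proposal is correct and follows essentially the same route as the paper: pass to the minimal resolution, use the negativity lemma and log canonicity to see that $\Delta_Y$ is a boundary, observe that $K_Y+\Delta_Y=f^*(K_X+\Delta)$ is nef, apply Theorem \ref{h-thm7.2}, and descend semi-ampleness via $f_*\mathcal O_Y\simeq \mathcal O_X$. The paper leaves the final descent as an implicit one-liner; your spelled-out version of it is accurate.
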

\begin{proof}
Let $f\colon Y\to X$ be the minimal resolution of $X$. 
We put $K_Y+\Delta_Y:=f^*(K_X+\Delta)$. 
Then $\Delta_Y$ is effective by the negativity lemma 
and is a subboundary $\mathbb Q$-divisor 
on $Y$ since $(X, \Delta)$ is log canonical. 
Therefore, $\Delta_Y$ is a boundary $\mathbb Q$-divisor on $Y$. 
We can easily see that 
$(K_Y+\Delta_Y)\cdot C_Y\geq 0$ for every 
curve $C_Y$ on $Y$. 
Thus, by Theorem \ref{h-thm7.2}, 
$K_Y+\Delta_Y$ is semi-ample. 
This implies that $K_X+\Delta$ is also semi-ample. 
\end{proof}

Let us start the proof of Theorem \ref{h-thm7.2}. 

\begin{proof}[Proof of Theorem \ref{h-thm7.2}]
By the non-vanishing theorem (see Theorem \ref{h-thm7.1}), 
we have $\kappa (X, K_X+\Delta)\geq 0$. 
\setcounter{step}{0}
\begin{step}[$\kappa=2$]
If $\kappa(X, K_X+\Delta)=2$, 
then $X$ is projective by Corollary \ref{f-cor4.2}. 
In this case, we can apply \cite[Theorem 4.1]{fujino-surfaces}, 
which is one of the deepest results in \cite{fujino-surfaces}, 
and obtain that $K_X+\Delta$ is semi-ample. 
\end{step}
\begin{step}[$\kappa=1$]
If $\kappa(X, K_X+\Delta)=1$, then we see 
that $K_X+\Delta$ is semi-ample by 
Lemma \ref{h-lem7.3}. 
\end{step}
\begin{step}[$\kappa=0$]
In this step, we assume $\kappa(X, K_X+\Delta)=0$. 
If $X$ is projective, then $K_X+\Delta$ is semi-ample 
by \cite[Theorem 6.2]{fujino-surfaces}. 
Here, we will explain that the proof of \cite[Theorem 6.2]{fujino-surfaces} 
works with some minor modifications when $X$ is not projective. 
From now on, we 
will freely use the notation of the proof of \cite[Theorem 6.2]{fujino-surfaces}. 

The first part of the proof of \cite[Theorem 6.2]{fujino-surfaces} 
works without any changes (see page 361 in \cite{fujino-surfaces}). 
We note that $Z$ is a member of $|m(K_S+\Delta_S)|$ for some divisible 
positive integer $m$. We also note that Mumford's arguments 
on {\em{indecomposable curves of canonical type}} 
work on smooth analytic surfaces 
(see \cite[Definition, Lemma, and Corollary 1 in Section 2]{mumford}). 
Therefore, \cite[Lemma 6.3]{fujino-surfaces} 
holds true. In particular, we obtain that $Z^2=0$. 
The compact smooth surface $S$ 
constructed in the first part of \cite[Theorem 6.2]{fujino-surfaces} 
is not projective. Of course, $S$ is in Fujiki's class $\mathcal C$ because 
$S$ is bimeromorphically equivalent to $X$ by construction. 
As in the proof of \cite[Theorem 6.2]{fujino-surfaces}, 
we will derive a contradiction assuming $Z\ne 0$. 

By Lemma \ref{f-lem4.3}, 
we have $\kappa (S, K_S)\geq 0$ since 
$S$ is not projective. 
Thus, all we have to 
do is to check 
that Step 1 in the proof of \cite[Theorem 6.2]{fujino-surfaces} 
works when $S$ is not projective. 

In Step 1 in the proof of \cite[Theorem 6.2]{fujino-surfaces}, 
$S$ is a compact smooth analytic surface with $\kappa (S, K_S)=0$ and there 
are no $(-1)$-curves on $S$. 
Since $S$ is in Fujiki's class $\mathcal C$, 
the first Betti number $b_1(S)$ of $S$ is even. 
Therefore, by the Enriques--Kodaira classification, 
$S$ is a $K3$ surface or a complex torus (see Lemma \ref{h-lem7.4}). 
Then, by Lemmas \ref{h-lem7.5} and \ref{h-lem7.6}, 
we have $\kappa (X, K_X+\Delta)=
\kappa (S, K_S+\Delta_S)=\kappa (S, Z)\geq 1$ and 
get a contradiction. This means that 
Step 1 in the proof of \cite[Theorem 6.2]{fujino-surfaces} works 
when $S$ is not projective. 
\end{step}

Therefore, $K_X+\Delta$ is always semi-ample. 
This is what we wanted. 
\end{proof}

\section{Contraction theorem 
for log canonical surfaces}\label{f-sec8} 

In this section, we discuss a contraction theorem 
for log canonical surfaces. 
Note that compact log canonical surfaces are not necessarily 
$\mathbb Q$-factorial. 
Therefore, we need Mumford's intersection theory 
(see \cite{mumford1}, \cite[Remark 4-6-3]{matsuki}, 
and \cite{sakai}). 

\begin{defn}[Mumford's intersection theory]\label{f-def8.1} 
Let $X$ be a normal analytic surface and let 
$\pi\colon Y\to X$ be a resolution. 
Let $\Exc(\pi)=\sum_i E_i$ be the irreducible decomposition 
of the exceptional curve of $\pi$. 
Let $D$ be a $\mathbb Q$-divisor 
on $X$. 
Then we can define the {\em{inverse image}} 
$\pi^*D$ 
as 
$$
\pi^*D=D^\dag+\sum _i \alpha_i E_i, 
$$
where $D^\dag$ is the strict transform of $D$ by 
$\pi$ and the rational numbers $\alpha_i$ are uniquely 
determined by 
the following linear equations: 
$$
D^\dag\cdot E_j+\sum _i \alpha_i E_i \cdot E_j=0
$$
for every $j$. 
We call $\pi^*D$ the {\em{pull-back of 
$D$ in the sense of Mumford}}. 
Of course, $\pi^*D$ coincides with 
the usual one when $D$ is $\mathbb Q$-Cartier. 

From now on, we further assume that $X$ is compact. 
The {\em{intersection number}} $D\cdot D'$ {\em{$($in the sense of 
Mumford$)$}} is 
defined to be the rational number $(\pi^*D)\cdot (\pi^*D')$, 
where $D$ and $D'$ are $\mathbb Q$-divisors on $X$. 
We can easily see that $D\cdot D'$ is well-defined. 
We note that it coincides with the usual one when $D$ or $D'$ 
is $\mathbb Q$-Cartier. 
\end{defn}

Let us recall some definitions and basic properties 
of surface singularities for the reader's convenience. 

\begin{defn}[{Numerically log canonical and numerically dlt, 
see \cite[Notation 4.1]{kollar-mori}}]\label{f-def8.2}
Let $X$ be a normal analytic surface and let $\Delta$ be a 
$\mathbb Q$-divisor on $X$. Let $f\colon Y\to U\subset X$ be a proper 
bimeromorphic morphism from a smooth 
surface $Y$ to an open set $U$ of $X$. 
Then we can define $f^*(K_U+\Delta|_U)$ in the sense of 
Mumford (see Definition \ref{f-def8.1}) without assuming that 
$K_U+\Delta|_U$ is $\mathbb Q$-Cartier. 
Thus we can always 
write 
$$
K_Y=f^*(K_U+\Delta|_U)+\sum _{E_i} a(E_i, X, \Delta)E_i
$$
such that $f_*\left(\sum _{E_i} a(E_i, X, \Delta)E_i\right)=-\Delta|_U$. 
If $\Delta$ is effective and $a(E_i, X, \Delta)\geq -1$ 
for every exceptional curve $E_i$ and $f\colon Y\to U\subset X$, 
then we say that $(X, \Delta)$ is {\em{numerically 
log canonical}}. 
We say that $(X, \Delta)$ is {\em{numerically dlt}} 
if $(X, \Delta)$ is numerically log canonical and 
there exists a finite set $Z\subset X$ such that 
$X\setminus Z$ is smooth, 
$\Supp \Delta|_{X\setminus Z}$ is a simple 
normal crossing divisor on $X\setminus Z$, 
and $a(E, X, \Delta)>-1$ for every 
exceptional curve $E$ which maps to $Z$. 
It is well known that if $(X, \Delta)$ is numerically 
log canonical then $K_X+\Delta$ is $\mathbb Q$-Cartier (see 
\cite[Proposition 3.5]{fujino-surfaces} and \cite[Remark 4-6-3]{matsuki}). 
Moreover, if $(X, \Delta)$ is numerically dlt then 
$X$ has only rational singularities (see \cite[Theorem 4.12]{kollar-mori}).  
\end{defn}

\begin{rem}\label{f-rem8.3}
In Definition \ref{f-def8.2}, we only require that 
$\Supp \Delta|_{X\setminus Z}$ is 
a simple normal crossing divisor on $X\setminus Z$ in the classical topology. 
So it permits some irreducible component of 
$\Supp \Delta|_{X\setminus Z}$ to have nodal singularities. 
Therefore, our definition does not coincide with 
\cite[Notation 4.1]{kollar-mori} when $X$ is an algebraic surface. 
However, since we are mainly interested in local 
analytic properties of singularities of pairs $(X, \Delta)$, 
this difference causes no subtle problems. 
\end{rem}

We need the following 
contraction theorem for log canonical surfaces 
in Sections \ref{f-sec9} and 
\ref{f-sec10}. 

\begin{thm}[{Contraction theorem for log canonical surfaces, 
see \cite[Theorem 4.1]{fujino-rational}}]\label{f-thm8.4}
Let $(X, \Delta)$ be a compact log canonical 
surface and let $C$ be an irreducible 
curve on $X$ such that 
$-(K_X+\Delta)\cdot C>0$ and 
$C^2<0$, where $C^2$ is the self-intersection number of 
$C$ in the sense of Mumford $($see Definition \ref{f-def8.1}$)$. 
Then we have a projective bimeromorphic morphism 
$\varphi\colon X\to Y$ onto a normal surface $Y$ such that 
$\Exc(\varphi)=C\simeq \mathbb P^1$ and that 
$C$ passes through no non-rational singular points of $X$, that is, 
$X$ has only rational singularities in a neighborhood 
of $C$. In particular, $C$ is $\mathbb Q$-Cartier. Moreover, 
$(Y, \Delta_Y)$ is log canonical with $\Delta_Y:=\varphi_*\Delta$. 
\end{thm}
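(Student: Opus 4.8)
The plan is to prove Theorem \ref{f-thm8.4} by reducing the existence of the contraction to a local analysis near $C$, where the key is to show that $C$ avoids the non-rational singular points of $X$. First I would pass to the minimal resolution $f\colon Y\to X$ and write $K_Y+\Delta_Y=f^*(K_X+\Delta)$ with $\Delta_Y$ effective (negativity lemma) and a boundary $\mathbb Q$-divisor (log canonicity). The strict transform $\widetilde C$ of $C$ together with the $f$-exceptional curves lying over points of $C$ forms a connected configuration with negative definite intersection form, so Grauert's contraction theorem guarantees that this whole configuration can be contracted analytically to a normal point; the morphism $\varphi$ is obtained by contracting precisely $C$ downstairs. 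The projectivity of $\varphi$ follows as in Theorem \ref{f-thm5.1}: since $C^2<0$ (in Mumford's sense) some multiple $lC$ admits a $\varphi$-ample line bundle structure once $C$ is shown to be $\mathbb Q$-Cartier.

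The heart of the argument—and the step I expect to be the main obstacle—is proving that $C$ passes through no non-rational singular point of $X$, equivalently that $X$ has only rational singularities in a neighborhood of $C$. Here I would exploit the inequality $-(K_X+\Delta)\cdot C>0$ together with the classification of two-dimensional log canonical singularities. The point is that a non-rational log canonical surface singularity is either simple elliptic or a cusp (or a $\mathbb Z/2$, $\mathbb Z/3$, $\mathbb Z/4$, $\mathbb Z/6$ quotient of these), and the dual graph of its minimal resolution is a cycle of rational curves or an elliptic curve, carrying a very rigid numerical structure. If $C$ passed through such a point, the discrepancies forced by $\Delta_Y$ being a subboundary would conflict with $-(K_X+\Delta)\cdot C>0$: intersecting the pullback relation with $\widetilde C$ and using the adjunction-type computation on the resolution, one finds that the negative intersection $-(K_X+\Delta)\cdot C$ cannot be accommodated when $C$ meets an elliptic or cusp configuration, because the exceptional curves over a non-rational point force $(K_Y+\Delta_Y)\cdot\widetilde C$ to be nonnegative. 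This is precisely where I anticipate the delicate case-by-case verification against the list of log canonical singularities.

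Once rationality near $C$ is established, the remaining assertions follow quickly. By Lemma \ref{f-lem3.10}, rational singularities are $\mathbb Q$-factorial, so $C$ is $\mathbb Q$-Cartier as a Weil divisor supported in the rational locus, which justifies the earlier projectivity claim. The identification $C\simeq\mathbb P^1$ is obtained exactly as in Claim \ref{g-claim1}: since $-(K_X+\Delta)$ is $\varphi$-ample we get $R^1\varphi_*\mathcal O_X=0$ via the vanishing theorem (Theorem \ref{f-thm11.3}), and the long exact sequence attached to $0\to\mathcal I_C\to\mathcal O_X\to\mathcal O_C\to 0$ together with the theorem on formal functions forces $H^1(C,\mathcal O_C)=0$, so $C$ is a smooth rational curve. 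Finally, to see that $(Y,\Delta_Y)$ is log canonical with $\Delta_Y=\varphi_*\Delta$, I would note that log canonicity is preserved under such contractions because discrepancies only decrease under the contraction of a curve on which $K_X+\Delta$ is negative; concretely, any resolution of $Y$ factors through a resolution of $X$, and comparing discrepancy formulas shows that the coefficients of the discrepancy divisor over $Y$ remain $\geq -1$.
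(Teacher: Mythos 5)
Your outline is correct and would yield the theorem, but it takes a genuinely different route from the paper at the central step. The paper does not analyze the singularities of $X$ along $C$ directly at all: it first invokes Sakai's contraction theorem (\cite[Theorem (1.2)]{sakai}) to produce $\varphi\colon X\to Y$ contracting $C$ to a normal point $P$, then observes that $-(K_X+\Delta)\cdot C>0$ together with the negativity lemma forces $(Y,\Delta_Y)$ to be \emph{numerically dlt} near $P$ (all discrepancies over $P$ exceed $-1$), so that $K_Y+\Delta_Y$ is $\mathbb Q$-Cartier and $Y$ has only rational singularities there by \cite[Theorem 4.12]{kollar-mori}; rationality is then transported \emph{back} to $X$ along $C$ via $R^i\varphi_*\mathcal O_X=0$ (Theorem \ref{f-thm11.3}) and Lemma \ref{h-lem2.8}. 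You instead prove the rationality of $X$ along $C$ upstream, on the minimal resolution, using the classification of non-rational log canonical singularities; this works, and in fact your mechanism is exactly the one the author deploys later in Claim \ref{g-claim2} of the proof of Theorem \ref{f-thm9.1}: since every exceptional divisor over a simple elliptic or cusp point has discrepancy $-1$, the reduced fibre $f^{-1}(P)$ sits inside $\Delta_Y$ with coefficient one, while $\widetilde C^2<0$ and $\widetilde C\not\subset\Supp\Delta_Y$ (Lemma \ref{f-lem9.2}) force $\widetilde C$ to be a $(-1)$-curve with $-K_Y\cdot\widetilde C=1\leq \Delta_Y\cdot\widetilde C$, contradicting $-(K_X+\Delta)\cdot C>0$. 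The paper's argument buys you independence from the classification (it only needs the negativity lemma and the numerically-dlt-implies-rational theorem), and it delivers the $\mathbb Q$-Cartier property of $K_Y+\Delta_Y$ for free; your argument is more hands-on but needs the classification and leaves the $\mathbb Q$-Cartier-ness of $K_Y+\Delta_Y$ to be supplied (it does follow, since $Y$ is rational hence $\mathbb Q$-factorial at $P$ once you transfer rationality down via $R^1\varphi_*\mathcal O_X=0$). One small correction: in your last paragraph you say that discrepancies ``only decrease'' under the contraction --- the sign is the other way. Since $K_X+\Delta=\varphi^*(K_Y+\Delta_Y)+aC$ with $a>0$ (because $(K_X+\Delta)\cdot C<0$ and $C^2<0$), passing to $(Y,\Delta_Y)$ amounts to shrinking the boundary, so discrepancies can only \emph{increase}, which is precisely why $(Y,\Delta_Y)$ remains log canonical.
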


\begin{proof}
By Sakai's contraction theorem (see \cite[Theorem (1.2)]{sakai}), 
we have a bimeromorphic morphism 
$\varphi\colon X\to Y$ which contracts $C$ to a normal point $P\in Y$. 
Since $-(K_X+\Delta)\cdot C>0$, $(Y, \Delta_Y)$ is 
numerically dlt in a neighborhood 
of $P$ by the negativity lemma. 
Therefore, $K_Y+\Delta_Y$ is $\mathbb Q$-Cartier and 
$Y$ has only rational singularities in a neighborhood 
of $P$. Of course, $(Y, \Delta_Y)$ is a compact log canonical surface. 
By Theorem \ref{f-thm11.3} below, 
$R^i\varphi_*\mathcal O_X=0$ for 
every $i>0$. 
Thus, $X$ has only rational singularities in a neighborhood 
of $C$ by Lemma \ref{h-lem2.8}. In particular, 
$C$ is $\mathbb Q$-Cartier (see \cite[Chapter II, 2.12.~Lemma]
{nakayama}). 
Since $R^1\varphi_*\mathcal O_X=0$, we can easily check that 
$C\simeq \mathbb P^1$ as in 
Claim \ref{g-claim1} of the proof of Theorem \ref{f-thm5.1}. 
We see that 
$\varphi$ is projective, $-(K_X+\Delta)$ and $-C$ 
are $\varphi$-ample by construction. 
\end{proof}

We close this section with simple but very important remarks. 

\begin{rem}[Extremal rays]\label{f-rem8.5} 
Theorem \ref{f-thm8.4} says that 
$X$ has only rational singularities in a neighborhood 
of the exceptional curve $C$ and then 
$C$ is automatically $\mathbb Q$-Cartier. 
Therefore, if $X$ is projective, 
then $C$ spans a $(K_X+\Delta)$-negative 
extremal ray $R$ of $\NE(X)$ 
in the usual sense (see Lemma \ref{f-lem5.2}). 
Thus the contraction $\varphi$ in Theorem \ref{f-thm8.4} 
is nothing but the usual contraction morphism 
associated to the extremal ray $R$. 
In particular, $Y$ is also projective when so is $X$. 
\end{rem}

\begin{rem}[Termination of contractions]\label{f-rem8.6} 
Assume that $X$ is Moishezon. 
We consider a sequence of contraction morphisms 
as in Theorem \ref{f-thm8.4} 
$$
(X, \Delta)=:(X_0, \Delta_0)\overset{\varphi_0}{\longrightarrow} 
(X_1, \Delta_1) \overset{\varphi_1}{\longrightarrow} 
\cdots 
\overset{\varphi_{i-1}}{\longrightarrow} 
(X_i, \Delta_i)\overset{\varphi_{i}}{\longrightarrow} \cdots
$$ 
starting from a log canonical surface $(X_0, \Delta_0):=(X, \Delta)$. 
Let $C_i$ be the $\varphi_i$-exceptional curve for every $i$. 
By Theorem \ref{f-thm8.4}, 
$C_i$ is $\mathbb Q$-Cartier for every $i$. 
Then we can easily see that 
$C_0$, $\varphi_0^*C_1$, $\ldots$, $\varphi_0^*\cdots\varphi_{i-1}^*C_i$, 
$\ldots$ are linearly independent in $N^1(X)$. 
Therefore, the sequence must terminate since $\rho(X)<\infty$. 
\end{rem}

\section{Log canonical surfaces in Fujiki's class $\mathcal C$ with negative 
Kodaira dimension}\label{f-sec9} 

The main purpose of this section is to prove the following 
theorem. 

\begin{thm}[see Theorem \ref{f-thm1.3}]\label{f-thm9.1}
Let $(X, \Delta)$ be a log canonical surface in Fujiki's class $\mathcal C$. 
Assume that $\kappa (X, K_X+\Delta)=-\infty$ holds. 
Then $X$ is projective. 
\end{thm}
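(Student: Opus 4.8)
The plan is to reduce the projectivity of $(X,\Delta)$ to the already-established projectivity criterion for $\mathbb{Q}$-factorial log surfaces, namely Lemma \ref{f-lem4.3}. The obstruction is exactly that a log canonical surface need not be $\mathbb{Q}$-factorial, so the strategy is to run the contraction process of Theorem \ref{f-thm8.4} and use it to control the non-$\mathbb{Q}$-factorial (equivalently, non-rational) singular points. First I would pass to the minimal resolution $f\colon Y\to X$ and write $K_Y+\Delta_Y=f^*(K_X+\Delta)$; since $(X,\Delta)$ is log canonical, $\Delta_Y$ is an effective boundary by the negativity lemma, and $\kappa(Y,K_Y)\le\kappa(Y,K_Y+\Delta_Y)=\kappa(X,K_X+\Delta)=-\infty$. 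Hence $Y$ is a smooth surface with $\kappa(Y,K_Y)=-\infty$ and even first Betti number (as $X$ is in Fujiki's class $\mathcal C$), so by the Enriques--Kodaira classification $Y$ is projective; in particular $X$ is Moishezon, i.e. $a(X)=2$.

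The remaining task is to upgrade ``Moishezon'' to ``projective'' in the absence of $\mathbb{Q}$-factoriality. The key point is to isolate the bad points. By Lemma \ref{f-lem3.10} the non-$\mathbb{Q}$-factorial locus of $X$ consists precisely of non-rational log canonical singularities, and these are classified: two-dimensional log canonical singularities that are not rational are the so-called simple elliptic, cusp, and their $\mathbb{Z}/2$-, $\mathbb{Z}/3$-, etc.\ quotients. The plan is to use this classification to understand the exceptional configuration over each such point on the minimal resolution $Y$ and to show that, after running the MMP-style contractions of Theorem \ref{f-thm8.4} that are available because $\kappa=-\infty$ forces the existence of $(K_X+\Delta)$-negative curves, one reaches a model on which a suitable ample divisor can be produced. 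Concretely, since $X$ is Moishezon, on any $\mathbb{Q}$-factorial birational modification (e.g.\ $Y$ itself, which is projective) we can take an ample divisor $H$, and the question becomes whether its pushforward, corrected along the exceptional curves in the sense of Mumford, descends to an honest ample $\mathbb{Q}$-Cartier divisor on $X$. This will fail in general only at the non-rational points, so the argument must exploit the specific numerical structure of those singularities.

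The hard part will be handling the non-rational log canonical points, where $X$ is genuinely non-$\mathbb{Q}$-factorial and Nakai--Moishezon cannot be applied directly as in Lemma \ref{f-lem4.1}. My expectation is that one proves the existence of an ample line bundle by a case analysis driven by the classification of log canonical singularities: for each possible non-rational singularity type one analyzes the dual graph of the exceptional divisor of the minimal resolution and shows that, under the hypothesis $\kappa(X,K_X+\Delta)=-\infty$, the global geometry (ruledness of $Y$ coming from $\kappa(Y,K_Y)=-\infty$, together with the negative intersection data supplied by the negativity lemma) forces enough curves to be contractible via Theorem \ref{f-thm8.4} and ultimately forces a line bundle with positive self-intersection and positive degree on every curve. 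One then invokes the Nakai--Moishezon criterion for algebraic spaces (as in the proof of Lemma \ref{f-lem4.1}) to conclude projectivity. I expect that the subtlety is precisely that simple elliptic and cusp singularities are exactly the configurations Nagata-type non-projective examples are built from, so the proof must verify that such configurations are incompatible with $\kappa(X,K_X+\Delta)=-\infty$; ruling these out is where the bulk of the work, and the reliance on the explicit singularity classification, will go.
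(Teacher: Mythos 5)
Your first step is exactly the paper's Step 1: pass to the minimal resolution, use the negativity lemma and the Enriques--Kodaira classification to see that $Y$ is projective, hence $X$ is Moishezon. After that, however, your proposal stops being a proof and becomes a guess at one, and the guess points in the wrong direction. You write that the bulk of the work should consist in showing that simple elliptic and cusp configurations are \emph{incompatible} with $\kappa(X,K_X+\Delta)=-\infty$, so that the non-rational points get ruled out and one falls back on the $\mathbb Q$-factorial criterion. That is not what happens, and it is not true: a simple elliptic singularity is perfectly compatible with $\kappa=-\infty$ (think of a cone over an elliptic curve). The paper's Remark \ref{f-rem9.3} makes this explicit --- what gets ruled out is only the cusp case and the possibility of more than one non-rational point; the simple elliptic case survives and must be handled by a different mechanism, namely by showing that in that case $-K_X$ is itself ample. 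An argument organized around ``rule out all non-rational singularities'' cannot close.

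The concrete steps you are missing are the following. First, one runs the contractions of Theorem \ref{f-thm8.4} so that every $(K_X+\Delta)$-negative curve has non-negative self-intersection in Mumford's sense. Second, one resolves \emph{only} the non-rational points, $g\colon Z\to X$; then $Z$ is $\mathbb Q$-factorial and projective, $\Delta_Z=\Exc(g)+g_*^{-1}\Delta$ with $\Exc(g)$ appearing with coefficient one (Lemma \ref{f-lem9.2}), and by the non-vanishing theorem there is a $(K_Z+\Delta_Z)$-negative extremal ray $\mathbb R_{\geq 0}[\bar C]$. The key computation --- absent from your sketch --- shows $\bar C^2\geq 0$: if $\bar C^2<0$ then its image meets a non-rational point, its strict transform on $Y$ is a $(-1)$-curve with $-K_Y\cdot C_Y=1$, but $\Delta_Y\cdot C_Y\geq 1$ because the reduced fiber over the non-rational point lies in $\Delta_Y$, contradicting $(K_Y+\Delta_Y)\cdot C_Y<0$. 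The case $\bar C^2>0$ forces $\rho(Z)=1$ and is impossible, so $\bar C^2=0$ yields a Mori fiber space $Z\to W$ over a curve; since $-K_Z\cdot\bar C=2$ and $\Exc(g)\leq\Delta_Z$, the exceptional locus is a single elliptic section, $P$ is simple elliptic, and an explicit computation with $K_Z+E=g^*K_X$ shows $-K_X$ is positive on every curve and has positive square, whence ample by Nakai--Moishezon for algebraic spaces. Your vaguer plan of pushing forward an ample divisor from a $\mathbb Q$-factorial model ``corrected in the sense of Mumford'' also does not work as stated, because the Mumford pushforward need not be $\mathbb Q$-Cartier at the non-rational point; the paper succeeds precisely because the divisor it produces is $-K_X$, which is Cartier there by the Gorenstein property of simple elliptic singularities.
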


Let us recall the following well-known lemma for the reader's convenience 
(see \cite[Remark 4-6-29]{matsuki}). 

\begin{lem}\label{f-lem9.2}
Let $(X, \Delta)$ be a log canonical surface. 
Assume that $P\in X$ is not a rational singularity. 
Then $P\not\in \Supp \Delta$ and $X$ is Gorenstein at $P$. 
\end{lem}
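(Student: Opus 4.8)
The plan is to analyze the minimal resolution near $P$ and combine the classification of two-dimensional log canonical singularities with the negativity lemma. First I would fix the minimal resolution $f\colon Y\to X$ over a small neighborhood of $P$ and write $K_Y+\Delta_Y=f^*(K_X+\Delta)$ in the sense of Mumford (Definition \ref{f-def8.1}), so that log canonicity of $(X,\Delta)$ amounts to $a(E_i,X,\Delta)\geq -1$ for every exceptional curve $E_i$. Since $\Delta\geq 0$, dropping the boundary only raises discrepancies, $a(E_i,X,0)\geq a(E_i,X,\Delta)\geq -1$, so $(X,0)$ is also log canonical near $P$.

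Next I would use that klt surface singularities are quotient singularities, hence rational; as $P$ is non-rational, $(X,0)$ must be strictly log canonical (not klt) at $P$. Invoking the classification of two-dimensional log canonical singularities (see \cite[Remark 4-6-29]{matsuki} and \cite[Theorem 4.12]{kollar-mori}), the exceptional locus of $f$ over $P$ is then forced to be either a single smooth elliptic curve (a simple elliptic singularity) or a cycle of smooth rational curves, equivalently a rational curve with a single node (a cusp singularity): these are precisely the non-rational strictly log canonical configurations, the remaining strictly log canonical germs being quotients and therefore rational. In both surviving cases $X$ is Gorenstein at $P$ and $a(E_i,X,0)=-1$ for every exceptional component $E_i$. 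This already yields the Gorenstein assertion.

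It then remains to show $P\notin\Supp\Delta$. Suppose not, and let $\widetilde{\Delta}:=f^{-1}_*\Delta$ be the strict transform. Since $\Delta$ passes through $P$, its strict transform meets the exceptional locus, so $\widetilde{\Delta}\cdot E_i>0$ for some $i$. Writing $f^*\Delta=\widetilde{\Delta}+\sum_j c_j E_j$ with the $c_j$ determined by $f^*\Delta\cdot E_j=0$ for all $j$, the negativity lemma (Lemma \ref{h-lem2.6}), applied to the connected and negative definite fiber, forces $c_j>0$ for every $j$. Comparing $K_Y+\Delta_Y=f^*K_X+f^*\Delta$ with the computation for $(X,0)$ gives $a(E_i,X,\Delta)=a(E_i,X,0)-c_i=-1-c_i<-1$, contradicting log canonicity of $(X,\Delta)$. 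Hence $P\notin\Supp\Delta$, completing the proof.

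The main obstacle is the classification input: one has to know not merely that a non-klt log canonical surface germ carries a divisor of discrepancy $-1$, but that non-rationality singles out exactly the simple elliptic and cusp cases, which are Gorenstein with the entire exceptional locus of discrepancy $-1$. Separating these from the rational non-klt (quotient) log canonical types, which also carry discrepancy-$(-1)$ divisors yet are rational, is the delicate point; once the exceptional configuration is pinned down, the negativity argument for $P\notin\Supp\Delta$ is routine.
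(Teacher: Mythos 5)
Your proof is correct and rests on the same two ingredients as the paper's --- the classification of two-dimensional log canonical singularities and the negativity lemma --- but it arranges them in the opposite order and justifies the step $P\notin\Supp\Delta$ by a different key input. The paper first rules out $P\in\Supp\Delta$ without invoking the classification at all: if the effective divisor $\Delta$ passed through $P$, the negativity lemma would force every exceptional discrepancy of $(X,0)$ over $P$ to be strictly greater than $-1$, so $(X,0)$ would be numerically dlt at $P$ and hence, by \cite[Theorem 4.12]{kollar-mori}, $P$ would be a rational singularity, a contradiction; only afterwards does it use the classification (simple elliptic or cusp) to conclude that $X$ is Gorenstein at $P$. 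You instead establish the classification first, read off that every exceptional discrepancy of $(X,0)$ on the minimal resolution equals $-1$, and then obtain the contradiction $a(E_i,X,\Delta)=-1-c_i<-1$ by an explicit computation. Both routes are sound: the paper's is slightly more economical, since excluding $P$ from $\Supp\Delta$ needs only the general implication ``numerically dlt $\Rightarrow$ rational'' rather than the finer fact that the non-rational log canonical germs have all discrepancies exactly $-1$, while your version makes the discrepancy bookkeeping completely explicit. You also correctly identify the genuinely delicate point, namely that non-rationality isolates precisely the simple elliptic and cusp cases among strictly log canonical surface germs.
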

\begin{proof}[Sketch of Proof]
If $P\in \Supp \Delta$, 
then $(X, 0)$ is numerically dlt in a neighborhood of $P$. 
In particular, $X$ has only rational singularities in a neighborhood of 
$P$. Therefore, we have $P\not\in \Supp \Delta$. 
By the classification of two-dimensional log canonical 
singularities (see \cite[Theorem 4.7]{kollar-mori} 
and \cite[Theorem 4-6-28]{matsuki}), 
$P\in X$ is a simple elliptic singularity or a cusp singularity 
(see \cite[Note 4.8]{kollar-mori} and \cite[Theorem 4-6-28]{matsuki}). 
We can check that all the other two-dimensional 
log canonical singularities are rational singularities 
(see \cite[Remark 4-6-29]{matsuki}). 
Therefore, $X$ is Gorenstein at $P$. 
\end{proof}

Let us start the proof of Theorem \ref{f-thm9.1}. 

\begin{proof}[Proof of Theorem \ref{f-thm9.1}]
We divide the proof into several small steps. 
\setcounter{step}{0}
\begin{step}\label{f-thm9.1-step1}
In this step, we will prove that $X$ is Moishezon, 
that is, the algebraic dimension 
$a(X)$ of $X$ is two. 

Let $f\colon Y\to X$ be the minimal resolution of $X$ with 
$K_Y+\Delta_Y:=f^*(K_X+\Delta)$. 
Then $(Y, \Delta_Y)$ is log canonical since 
so is $(X, \Delta)$ by assumption. 
By applying Lemma \ref{f-lem4.3} to 
$(Y, \Delta_Y)$, we obtain that 
$Y$ is a smooth projective 
surface. 
This implies that 
$X$ is Moishezon, that is, the algebraic dimension 
$a(X)$ of $X$ is two. 
\end{step}
\begin{step}\label{f-thm9.1-step2}
If $X$ has only rational singularities, then $X$ is 
$\mathbb Q$-factorial (see Lemma \ref{f-lem3.10}). 
In this case, by Lemma \ref{f-lem4.1}, 
$X$ is projective since we have already 
known that $X$ is Moishezon in Step \ref{f-thm9.1-step1}. 
\end{step}
Therefore, from now on, we may assume that $X$ has 
at least one non-rational singular point. 
\begin{step}\label{f-thm9.1-step3}
By applying Theorem \ref{f-thm8.4} finitely many times, 
we may assume that if $C$ is an irreducible curve on $X$ 
with $-(K_X+\Delta)\cdot C>0$ then 
$C^2\geq 0$ holds (see Remark \ref{f-rem8.6}). 
\end{step}
\begin{step}\label{f-thm9.1-step4}
Let $g\colon Z\to X$ be the minimal resolution of non-rational 
singularities of $X$. 
Then we get the following commutative diagram. 
$$
\xymatrix{
Y \ar[d]_h\ar[dr]^f& \\ 
Z \ar[r]_g& X
}
$$
Since $Z$ has only rational singularities by 
construction, we see that $Z$ is $\mathbb Q$-factorial 
(see Lemma \ref{f-lem3.10}). 
Therefore, $Z$ 
is projective by Lemma \ref{f-lem4.1} since 
$Z$ is Moishezon (see Step \ref{f-thm9.1-step1}). 
We put $K_Z+\Delta_Z:=g^*(K_X+\Delta)$. 
Then $\Delta_Z$ is effective by the negativity lemma. 
Of course, $(Z, \Delta_Z)$ is log canonical. 
More precisely, by Lemma \ref{f-lem9.2}, 
we have $\Delta_Z=\sum _i E_i +g_*^{-1}\Delta$, 
where $\Exc(g)=\sum _i E_i$. 
Since $K_Z+\Delta_Z$ never becomes nef by 
$\kappa(Z, K_Z+\Delta_Z)=\kappa(X, K_X+\Delta)=-\infty$ 
(see Theorem \ref{h-thm7.1}), 
there exists a $(K_Z+\Delta_Z)$-negative 
extremal ray $R=\mathbb R_{\geq 0}[\bar{C}]$ of 
$\NE(Z)$, where $\bar{C}$ is an irreducible rational curve 
on $Z$. 
\end{step}
\begin{step}\label{f-thm9.1-step5}
In this step, we will prove the following claim. 
\begin{claim}\label{g-claim2} 
The self-intersection number $\bar{C}^2$ of 
$\bar{C}$ is non-negative, where $\bar{C}$ is 
a $(K_Z+\Delta_Z)$-negative 
extremal rational curve as in Step \ref{f-thm9.1-step4}. 
\end{claim}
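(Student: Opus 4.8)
The plan is to argue by contradiction: assume $\bar C^2<0$ and derive a contradiction. The first observation is that $\bar C$ cannot be $g$-exceptional. Indeed, for every component $E_i$ of $\Exc(g)$ the projection formula gives
\[
(K_Z+\Delta_Z)\cdot E_i=g^*(K_X+\Delta)\cdot E_i=(K_X+\Delta)\cdot g_*E_i=0,
\]
whereas $(K_Z+\Delta_Z)\cdot \bar C<0$ since $\bar C$ spans a $(K_Z+\Delta_Z)$-negative ray. Hence $C:=g_*\bar C$ is a genuine irreducible curve on $X$, and again by the projection formula $(K_X+\Delta)\cdot C=g^*(K_X+\Delta)\cdot \bar C=(K_Z+\Delta_Z)\cdot \bar C<0$. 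Thus $-(K_X+\Delta)\cdot C>0$, and Step \ref{f-thm9.1-step3} forces $C^2\geq 0$ (all intersection numbers on $X$ being taken in the sense of Mumford).

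Next I would split into cases according to whether $\bar C$ meets $\Exc(g)$. If $\bar C\cap \Exc(g)=\emptyset$, then $g$ is an isomorphism near $\bar C$, so $g^*C=\bar C$ and $\bar C^2=C^2\geq 0$, contradicting $\bar C^2<0$; this case is immediate. The substantial case is when $\bar C$ meets $\Exc(g)$. Since $g$ resolves only the non-rational singularities, any exceptional curve met by $\bar C$ lies over a simple elliptic or a cusp singularity of $X$; by Lemma \ref{f-lem9.2} these are the only non-rational log canonical points and they are disjoint from $\Supp\Delta$, so by Step \ref{f-thm9.1-step4} the whole reduced exceptional configuration $F$ over such a point occurs in the boundary with coefficient one.

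To exploit this I would pass to the minimal resolution $h\colon Y\to Z$ from the diagram in Step \ref{f-thm9.1-step4}, where adjunction is clean because $Y$ is smooth. Writing $C_Y$ for the strict transform of $\bar C$ and $d_C\in[0,1]$ for the coefficient of $C$ in $\Delta$, one has $(K_Y+\Delta_Y)\cdot C_Y=(K_Z+\Delta_Z)\cdot \bar C<0$, while $C_Y^2\in\mathbb Z$ satisfies $C_Y^2\leq \bar C^2<0$, hence $C_Y^2\leq -1$, and $F\cdot C_Y=:m\geq 1$. Using $\Delta_Y\geq F+d_CC_Y$ together with smooth adjunction $(K_Y+C_Y)\cdot C_Y=2p_a(C_Y)-2\geq -2$, I would obtain
\[
0>(K_Y+\Delta_Y)\cdot C_Y\geq -2+m+(1-d_C)(-C_Y^2),
\]
so that $m+(1-d_C)(-C_Y^2)<2$. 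When $d_C=0$ this is impossible, since $m\geq 1$ and $-C_Y^2\geq 1$, and the contradiction is complete.

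The main obstacle is the borderline situation that this inequality still permits when $d_C>0$, namely $m=1$ and $(1-d_C)(-C_Y^2)<1$, which is exactly the case in which $C$ is itself a component of $\Delta$ meeting the non-rational configuration minimally. To rule it out I would feed back $C^2\geq 0$: writing $g^*C=\bar C+B$ with $B$ effective and supported on $\Exc(g)$, the orthogonality $g^*C\cdot E_i=0$ gives $C^2=\bar C^2-B^2$, so $\bar C^2=C^2+B^2\geq B^2$, and $B^2$ is explicitly computable from the negative definite intersection matrix of the simple elliptic or cusp configuration met by $\bar C$. Comparing the resulting admissible values of $B^2$ with $C_Y^2\leq -1$ is where the argument really bites, and this is the step I expect to require the most care: it genuinely uses the classification of two-dimensional log canonical singularities, and one must also control the possibly fractional self-intersection of $\bar C$ caused by its passing through the remaining rational singularities of $Z$.
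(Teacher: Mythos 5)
Your argument is the paper's argument up to and including the case $d_C=0$: reduce to a curve $C=g_*\bar C$ through a non-rational point $P$, pass to the minimal resolution $Y$, and play the adjunction bound $(K_Y+C_Y)\cdot C_Y\geq -2$ against the fact that the reduced fibre $f^{-1}(P)$ sits inside $\Delta_Y$ with coefficient one and meets $C_Y$. (The paper phrases this by noting $C_Y$ is a $(-1)$-curve, so $-K_Y\cdot C_Y=1$ while $\Delta_Y\cdot C_Y\geq 1$; your inequality $0>-2+m+(-C_Y^2)$ is the same computation.)

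The one genuine defect is that the ``main obstacle'' you describe does not exist, and the elaborate endgame you sketch for $d_C>0$ (computing $B^2$ from the negative definite configuration and invoking the classification) is unnecessary. Lemma \ref{f-lem9.2} says that a non-rational singular point $P$ of a log canonical surface satisfies $P\not\in \Supp\Delta$. In the case you are worried about, $C$ passes through such a point $P$; if $C$ were a component of $\Delta$, then every point of $C$, in particular $P$, would lie in $\Supp\Delta$. Hence $d_C=0$ automatically. You in fact already invoked exactly this disjointness to conclude that the whole exceptional configuration $F$ enters the boundary with coefficient one, but did not apply it to $C$ itself. Once you do, your $d_C=0$ computation closes the proof, and the feedback of $C^2\geq 0$ from Step \ref{f-thm9.1-step3} is needed only in the easy case where $\bar C$ misses $\Exc(g)$, exactly as in the paper.
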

\begin{proof}[Proof of Claim \ref{g-claim2}]
We assume that $\bar{C}^2<0$ holds. 
Since $-(K_Z+\Delta_Z)\cdot \bar{C}>0$, $\bar{C}$ is not 
$g$-exceptional. 
If $C_X:=g_*\bar{C}$ is disjoint from non-rational singularities 
of $X$, then we have $(C_X)^2<0$ and 
$-(K_X+\Delta)\cdot C_X>0$ because $g$ is an isomorphism 
in a neighborhood of $\bar{C}$. 
This is a contradiction by Step \ref{f-thm9.1-step3}. 
Therefore, $C_X$ passes though 
at least one non-rational singular point $P$ of $X$. 
This implies that $C_X\not \subset \Supp \Delta$ by 
Lemma \ref{f-lem9.2}.  
Let $C_Y$ be 
the strict transform of $\bar{C}$ on $Y$. 
Then we can easily see that $(C_Y)^2<0$ holds and 
that $C_Y$ is not contained in $\Supp \Delta_Y$. 
Thus we obtain that $-K_Y
\cdot C_Y\geq -(K_Y+\Delta_Y)\cdot C_Y=-(K_Z+\Delta_Z)
\cdot \bar{C}>0$. 
This means that $C_Y$ is a $(-1)$-curve on $Y$. 
In particular, $-K_Y\cdot C_Y=1$. 
On the other hand, $\Delta_Y\cdot C_Y\geq 1$ since $C_X$ 
passes through a non-rational singular point $P$ and 
the reduced part of $\Delta_Y$ contains $f^{-1}(P)$. 
Thus we have $-(K_Y+\Delta_Y)\cdot C_Y=1-\Delta_Y\cdot C_Y\leq 0$. 
This is a contradiction. Therefore, $\bar{C}^2\geq 0$ holds. 
\end{proof}
Therefore, every $(K_Z+\Delta_Z)$-negative extremal ray  
is spanned by an irreducible rational curve $\bar{C}$ with $\bar{C}^2\geq 0$.  
\end{step}

We will treat the case where $\bar{C}^2>0$ and $\bar{C}^2=0$ 
in Step \ref{f-thm9.1-step6} and Step \ref{f-thm9.1-step7}, respectively. 

\begin{step}\label{f-thm9.1-step6} 
We assume that 
there exists a $(K_Z+\Delta_Z)$-negative 
extremal ray $R$ of $\NE(Z)$ spanned by 
an irreducible rational curve $\bar{C}$ with $\bar{C}^2>0$. 
In this case, 
the numerical equivalence class of $\bar{C}$ is an interior 
point of $\NE(Z)$ (see \cite[Corollary 1.21]{kollar-mori}) and 
it also generates an extremal ray of $\NE(Z)$. 
Therefore, it is easy to see that $\rho(Z)=1$ and 
$-(K_Z+\Delta_Z)$ is 
ample. This is a contradiction because $-(K_Z+\Delta_Z)\cdot E=0$ 
for every $g$-exceptional curve $E$ on $Z$. 
Thus, this case does not happen. 
\end{step}
\begin{step}\label{f-thm9.1-step7}
Hence we may assume that there exists a $(K_Z+\Delta_Z)$-negative 
extremal ray $R$ spanned by an irreducible rational curve 
$\bar{C}$ with $\bar{C}^2=0$. 
Then there exists a surjective morphism $\varphi_R\colon Z\to W$ onto a 
smooth projective curve $W$ 
such that $\rho(Z/W)=1$ and $-(K_Z+\Delta_Z)$ is 
$\varphi_R$-ample, that is, 
$\varphi_R\colon Z\to W$ is a Mori fiber space. 
Without loss of generality, we may assume that $\bar{C}$ is 
a general fiber of $\varphi_R$. 
Since the self-intersection number of any irreducible 
component of $\Exc(g)$ is negative, 
every irreducible component of $\Exc(g)$ is dominant onto 
$W$ by $\varphi_R$. 
Since 
$-K_Z\cdot \bar{C}=2$ and 
$\Delta_Z\geq \sum _i E_i=\Exc(g)$, 
we can check that $E:=\Exc(g)$ is an irreducible 
curve such that $\varphi_R|_E:E\to W$ is an isomorphism. 
By the classification of two-dimensional log canonical 
singularities (see \cite[Theorem 4.7]{kollar-mori} 
and \cite[Theorem 4-6-28]{matsuki}), 
$E$ is an elliptic curve and $P$ is a simple elliptic 
singularity. 
\begin{claim}\label{g-claim3}
$-K_X$ is ample, that is, $X$ is Fano. In particular, $X$ is projective. 
\end{claim}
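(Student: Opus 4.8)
The plan is to extract the numerical geometry of the Mori fiber space $\varphi_R\colon Z\to W$ and push it down through $g$ to $X$, concluding by the Nakai--Moishezon criterion for algebraic spaces exactly as in the proof of Lemma \ref{f-lem4.1}. First I would record the structure constants. Since $\varphi_R|_E\colon E\to W$ is an isomorphism and $E$ is an elliptic curve, $W$ is an elliptic curve and $E$ is a section of $\varphi_R$, so $E\cdot \bar C=1$. As $\varphi_R$ is a Mori fiber space over a curve with $\rho(Z/W)=1$, every fiber is irreducible and $\rho(Z)=2$; thus $N^1(Z)=\mathbb R[E]\oplus \mathbb R[F]$ with $F:=[\bar C]$, and the intersection form is $F^2=0$, $E\cdot F=1$, $E^2=-d$ for some integer $d\geq 1$ (note $Z$ is smooth along $E$, the exceptional locus of a minimal resolution). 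Since $P$ is a simple elliptic singularity, $X$ is Gorenstein at $P$ and the discrepancy of $E$ is exactly $-1$, so $g^*K_X=K_Z+E$; away from $P$ the surface $X$ has only rational, hence $\mathbb Q$-factorial, singularities, so $-K_X$ is $\mathbb Q$-Cartier.

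Next I would compute $K_Z$ in this basis. From $K_Z\cdot F=-2$ (recall $-K_Z\cdot\bar C=2$) and $K_Z\cdot E=-E^2=d$ (adjunction for the smooth elliptic curve $E$) one gets $K_Z\equiv -2E-dF$, whence
$$
L:=-(K_Z+E)=g^*(-K_X)\equiv E+dF.
$$
A direct calculation then gives $L^2=E^2+2d\,(E\cdot F)=d>0$, so $L$ is big, and $L\cdot E=E^2+d\,(E\cdot F)=0$. For any irreducible curve $D\neq E$ one finds $L\cdot D=E\cdot D+d\,(F\cdot D)$; both terms are non-negative (distinct irreducible curves meet non-negatively, and $F$ is nef), and they cannot vanish simultaneously, since $F\cdot D=0$ forces $D$ to be a component of a fiber, on which $E\cdot D>0$ because $E$ is a section. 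Hence $L$ is nef and big, with $L\cdot D>0$ for every irreducible $D\neq E$ and $L\cdot E=0$.

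Finally I would descend to $X$. Every curve $C$ on $X$ is the image of its strict transform $\tilde C\neq E$, and by Mumford's intersection theory (Definition \ref{f-def8.1}) we have $(-K_X)\cdot C=L\cdot \tilde C>0$, while $(-K_X)^2=L^2=d>0$. Passing to a divisible multiple to clear denominators and applying the Nakai--Moishezon criterion on the Moishezon (hence algebraic-space) surface $X$, exactly as in the proof of Lemma \ref{f-lem4.1}, shows that $-K_X$ is ample; thus $X$ is Fano and in particular projective. The point that needs care is the global numerical bookkeeping---that $\rho(Z)=2$ with $N^1(Z)$ spanned by $[E]$ and $[F]$---so that the single identity $L\cdot D=E\cdot D+d\,(F\cdot D)$ controls every curve at once, uniformly whether or not $Z$ is smooth; once this is in place, the positivity and the descent are routine. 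A secondary point is to confirm that $-K_X$ is $\mathbb Q$-Cartier, which holds because $X$ is Gorenstein at the simple elliptic point $P$ and $\mathbb Q$-factorial at its rational singularities.
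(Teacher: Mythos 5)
Your proof is correct and follows essentially the same route as the paper: both express $g^*(-K_X)=-(K_Z+E)$ as $E$ plus a positive multiple of the fiber class of the Mori fiber space (your $E+dF$ with $d=-E^2$ is exactly the paper's $E+\varphi_R^*D$ with $\deg D=-E^2$), check strict positivity against every curve other than $E$, and conclude by Nakai--Moishezon for algebraic spaces. The only cosmetic difference is that you verify $(-K_X)^2>0$ by direct computation in $N^1(Z)$, whereas the paper deduces it from $\kappa(X,-K_X)\geq 0$ via the $\mathbb Q$-linear equivalence.
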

\begin{proof}[Proof of Claim \ref{g-claim3}]
Since $\varphi_R\colon Z\to W$ is a Mori fiber space 
and $-(K_Z+E)\cdot \bar{C}=E\cdot \bar{C}=1$, 
we obtain that 
$-(K_Z+E)$ is $\mathbb Q$-linearly equivalent to 
$E+\varphi_R^*D$, where 
$D$ is some $\mathbb Q$-divisor on $W$. 
We note that $\deg D=-E^2>0$ since $(E+\varphi_R^*D)\cdot 
E=-(K_Z+E)\cdot E=0$. 
Thus, we have $\kappa (Z, -(K_Z+E))\geq 0$. 
Since 
$K_Z+E=g^*K_X$, we obtain  
$\kappa (X, -K_X)=\kappa (Z, -(K_Z+E))\geq 0$. 
We take any irreducible curve $C$ on $X$. 
Let $C'$ be the strict transform of $C$ on $Z$. 
Then $-K_X\cdot C=-(K_Z+E)\cdot C'=(E+\varphi_R^*D)\cdot 
C'>0$ since $C'\ne E$. 
Thus, we obtain that $-K_X$ is 
ample by Nakai--Moishezon's ampleness 
criterion for algebraic spaces 
(see \cite[(1.4) Theorem]{pascul}). 
\end{proof}
Thus we see that $X$ is projective when $X$ has at least one 
non-rational singular point. 
\end{step}
Therefore, we obtain that $X$ is always projective if 
$(X, \Delta)$ is a log canonical surface in Fujiki's 
class $\mathcal C$ with $\kappa (X, K_X+\Delta)=-\infty$. 
\end{proof}

\begin{rem}\label{f-rem9.3}
Let $(X, \Delta)$ be a log canonical surface in Fujiki's class 
$\mathcal C$ with $\kappa (X, K_X+\Delta)=-\infty$. 
The proof of Theorem \ref{f-thm9.1} 
shows that if $X$ has non-rational singularities 
then $X$ has one simple elliptic singularity 
and no cusp singularities. 
\end{rem}

By Theorem \ref{f-thm9.1}, we can freely apply the minimal model 
theory of projective log canonical surfaces established in 
\cite{fujino-surfaces} 
to log canonical surfaces in Fujiki's class $\mathcal C$ with 
negative Kodaira dimension. 

\begin{rem}\label{f-rem9.4}
We can construct a complete 
non-projective log canonical algebraic surface 
$(X, \Delta)$ with $\kappa (X, K_X+\Delta)\geq 0$. 
For some examples, see Section \ref{p-sec12} below. 
Therefore, the assumption $\kappa (X, K_X+\Delta)=-\infty$ 
is indispensable in Theorem \ref{f-thm9.1}. 
\end{rem}

\section{Proof of Theorem \ref{f-thm1.5}}\label{f-sec10}

In this section, we prove Theorem \ref{f-thm1.5}, 
that is, the minimal model theory for log canonical 
surfaces in Fujiki's class $\mathcal C$. 
We give a detailed proof for the reader's convenience, 
although it is essentially the same as that of 
Theorem \ref{f-thm1.1}. 

\begin{proof}[Proof of Theorem \ref{f-thm1.5}]
If $(K_X+\Delta)\cdot C\geq 0$ for every curve $C$ on $X$, 
then $K_X+\Delta$ is semi-ample by Corollary \ref{h-cor7.7}. 
So $(X, \Delta)$ is itself a good minimal model 
of $(X, \Delta)$. 
Therefore, we may assume that $(K_X+\Delta)\cdot C<0$ 
for some curve $C$ on $X$. 
If $X$ is projective, 
then we can run the minimal model program for projective 
log canonical surfaces and finally get a good minimal model 
or a Mori fiber space (see \cite{fujino-surfaces}). 
Thus we may assume that 
$X$ is not projective. 
By Theorem \ref{f-thm9.1}, we obtain $\kappa (X, K_X+\Delta)\geq 0$. 
Therefore, we have an effective Cartier divisor 
$D\in |m(K_X+\Delta)|$ for some positive divisible 
integer $m$. 
Since $(K_X+\Delta)\cdot C<0$ for some curve $C$ on $X$, 
$C$ is an irreducible component of $\Supp D$ such that 
the self-intersection number $C^2$ is negative. 
We apply the contraction theorem:~Theorem \ref{f-thm8.4}. 
Since there are only finitely many irreducible components of 
$\Supp D$, we finally get $(X^*, \Delta^*)$ such that 
$K_{X^*}+\Delta^*$ is semi-ample after finitely many contractions. 

Since $R^1{\varphi_i}_*\mathcal O_{X_i}=0$ by Theorem \ref{f-thm11.3} 
below, 
$X_i$ has only rational singularities if and only if so does 
$X_{i+1}$ by Lemma \ref{h-lem2.8}. 
Therefore, we have (2). 

Since $\varphi_i$ is projective by construction, 
$X_i$ is projective when so is $X_{i+1}$. 
If $X_i$ is projective, then $\varphi_i$ is the usual 
contraction morphism associated to a $(K_{X_i}+\Delta_i)$-negative 
extremal ray (see Remark \ref{f-rem8.5}). 
This implies that $X_{i+1}$ is also projective. 
Thus we have (1). 

Since $\Exc(\varphi_i)\simeq \mathbb P^1$, $C_i=\Exc(\varphi_i)$ 
is $\mathbb Q$-Cartier, 
and $-C_i$ is $\varphi_i$-ample, 
we can easily check (3) by Theorem \ref{f-thm3.11}. 
\end{proof}

\section{Appendix:~Vanishing theorems}\label{f-sec11} 

In this section, we quickly explain 
some vanishing theorems for the reader's convenience. 
Fortunately, we do not need 
difficult analytic methods. They follow from elementary arguments. 

\medskip 

Let us start with the following vanishing theorem. 
We learned it from \cite{kollar-kovacs} (see \cite[Theorem 10.4]{kollar2}). 

\begin{thm}[Relative vanishing theorem]\label{f-thm11.1} 
Let $\varphi\colon V\to W$ be a proper bimeromorphic 
morphism from a smooth analytic surface to a normal 
analytic surface $W$. Assume 
that there exists a point $P\in W$ such that 
$\varphi$ is an isomorphism 
over $W\setminus P$. 
Let $\Exc(\varphi) =\sum _i E_i$ be the irreducible decomposition of 
the $\varphi$-exceptional locus $\Exc(\varphi)$. 
Let $\mathcal L$ be a line bundle on $V$, 
let $N$ be a $\mathbb Q$-divisor on $V$, and 
let $E=\sum _i b_i E_i$ be an effective $\mathbb Q$-divisor 
on $V$. 
Assume that $N\cdot E_i\geq 0$ and $\mathcal L\cdot E_i=
(K_V+E+N)\cdot E_i$ hold for 
every $i$. 
We further assume that one of the following 
conditions holds. 
\begin{itemize}
\item[(1)] $0\leq b_i<1$ for every $i$. 
\item[(2)] $0<b_i\leq 1$ for every $i$ and 
there exists some $j$ such that 
$b_j\ne 1$. 
\item[(3)] $0<b_i\leq 1$ for every $i$ and 
there exists some $j$ such that $N\cdot E_j>0$. 
\end{itemize}
Then $R^i\varphi_*\mathcal L=0$ holds for every $i>0$. 
\end{thm}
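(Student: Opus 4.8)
The plan is to reduce everything to a cohomology-vanishing statement on the exceptional fiber and then prove that by peeling off one component at a time. First I would note that the assertion is local on $W$, so after shrinking I may assume that $\varphi$ is an isomorphism over $W\setminus P$ and that $\Exc(\varphi)=\varphi^{-1}(P)=\sum_i E_i$ is compact. Since the fibers of $\varphi$ have dimension at most one, $R^i\varphi_*\mathcal L=0$ automatically for $i\geq 2$, so only $R^1\varphi_*\mathcal L$ remains, and this sheaf is supported at $P$. By the theorem on formal functions for proper morphisms of analytic spaces, its stalk vanishes once $H^1(D,\mathcal L\otimes\mathcal O_D)=0$ for every effective divisor $D$ with $\Supp D\subseteq\Exc(\varphi)$, since such divisors form a cofinal system of thickenings of the fiber; this reduction is therefore harmless.

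Second, I would prove $H^1(D,\mathcal L\otimes\mathcal O_D)=0$ for every such $D=\sum_i d_iE_i$ by induction on $\sum_i d_i$, peeling off a single component. Given $D$, I choose a component $E_j\leq D$ and set $D'=D-E_j$; the exact sequence
$$
0\to\mathcal L|_{E_j}\otimes\mathcal O_{E_j}(-D')\to\mathcal L\otimes\mathcal O_D\to\mathcal L\otimes\mathcal O_{D'}\to 0
$$
reduces the vanishing to $H^1(D',\mathcal L\otimes\mathcal O_{D'})=0$, which holds by induction, together with $H^1(E_j,\mathcal L|_{E_j}\otimes\mathcal O_{E_j}(-D'))=0$. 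Since $E_j$ is an integral compact curve, the latter holds as soon as $\deg(\mathcal L|_{E_j}\otimes\mathcal O_{E_j}(-D'))>\deg\omega_{E_j}$: by Serre duality on $E_j$ the group $H^1$ is dual to the $H^0$ of a line bundle of negative degree, which vanishes. Using adjunction $\deg\omega_{E_j}=(K_V+E_j)\cdot E_j$ together with the hypothesis $\mathcal L\cdot E_j=(K_V+E+N)\cdot E_j$, this degree inequality becomes exactly
$$
(E-D)\cdot E_j+N\cdot E_j>0.
$$

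Third, and this is the crux, I must exhibit for every effective nonzero $D$ supported on $\Exc(\varphi)$ at least one component $E_j\leq D$ satisfying the displayed inequality; here the negative definiteness of $(E_i\cdot E_j)$ (Lemma \ref{h-lem2.6}) and the trichotomy (1)--(3) enter. Suppose no such $j$ exists, so $(D-E)\cdot E_j\geq N\cdot E_j\geq 0$ for every $E_j\leq D$. Writing $D-E=G_+-G_-$ with $G_+=\sum_{d_i\geq 1}(d_i-b_i)E_i$ and $G_-=\sum_{d_i=0}b_iE_i$ effective with disjoint supports (using $b_i\leq 1$), one computes $G_+\cdot E_j=(D-E)\cdot E_j+G_-\cdot E_j\geq 0$ for all $E_j\leq D$, whence $G_+^2\geq 0$; negative definiteness forces $G_+=0$, so $D=\sum_{i\in S}E_i$ is reduced with $b_i=1$ on $S:=\Supp D$. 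Feeding this back shows $G_-\cdot E_j=0$ and $N\cdot E_j=0$ for every $j\in S$; since the coefficients $b_i$ are strictly positive in cases (2) and (3) and $\Exc(\varphi)$ is connected, this forces $S=\Exc(\varphi)$ and hence $b_i\equiv 1$ and $N\cdot E_i\equiv 0$, contradicting (2) or (3) respectively, while in case (1) it already contradicts $b_i<1$. Thus a peelable component always exists and the induction closes. The main obstacle is precisely this last combinatorial step: obtaining the \emph{strict} inequality requires tracking how each of the three hypotheses rules out the degenerate configuration in which every intersection number on the candidate component vanishes, and it is here that connectedness of the fiber and the positivity of $N$ or of the coefficients must be used in tandem.
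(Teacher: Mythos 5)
Your proposal is correct, and it follows essentially the same route as the paper: the paper reduces to $H^1(Z,\mathcal L\otimes\mathcal O_Z)=0$ for divisorial thickenings $Z=\sum_i r_iE_i$ and concludes by the theorem on formal functions, delegating that $H^1$-vanishing to the proof of \cite[Theorem 10.4]{kollar2}. Your peeling induction together with the negative-definiteness argument producing a component $E_j$ with $(E-D)\cdot E_j+N\cdot E_j>0$ is precisely the content of that cited proof, transported to the analytic category, so you have simply written out the black box the paper invokes.
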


Theorem \ref{f-thm11.1} (1) is a special case 
of the Kawamata--Viehweg vanishing theorem. 
Note that condition (1) is most useful. 

\begin{rem}\label{f-rem11.2}
In Theorem \ref{f-thm11.1}, 
it is sufficient to assume that $N$ is a $\mathbb Q$-line bundle 
on $V$, that is, 
a finite $\mathbb Q$-linear combination 
of some line bundles on $V$ such that 
$N\cdot E_i\geq 0$ and $\mathcal L\cdot E_i=(K_V+E)\cdot 
E_i +N\cdot E_i$ hold for every $i$. 
\end{rem}

\begin{proof}[Proof of Theorem \ref{f-thm11.1}]
Let us consider $Z=\sum _i r_i E_i$, where 
$r_i$ is a non-negative integer for every $i$. 
Although \cite[Theorem 10.4]{kollar2} is formulated 
and proved for two-dimensional 
regular schemes, the proof works for two-dimensional 
complex manifolds. 
Therefore, by the proof of \cite[Theorem 10.4]{kollar2}, 
we obtain $H^1(Z, \mathcal L\otimes \mathcal O_Z)=0$ 
(see also \cite[2.2.1 Theorem]{kollar-kovacs}). 
Hence, by using the theorem on formal functions for 
proper morphisms between analytic spaces 
(see \cite[Chapter VI, Corollary 4.7]{banica-s}), 
we get $R^i\varphi_*\mathcal L=0$ for every $i>0$ 
(see also \cite[2.2.5 Corollary]{kollar-kovacs}). 
\end{proof}

As an application of Theorem \ref{f-thm11.1}, we 
can prove the following vanishing 
theorem, which is a Kawamata--Viehweg vanishing 
theorem for proper bimeromorphic 
morphisms between surfaces. 

\begin{thm}\label{f-thm11.3}
Let $X$ be a normal analytic surface and 
let $\Delta$ be an effective $\mathbb Q$-divisor on $X$ such that 
$K_X+\Delta$ is $\mathbb Q$-Cartier. 
Let $f\colon X\to Y$ be 
a proper bimeromorphic morphism onto a normal 
analytic surface $Y$. 
Let $\mathcal L$ be a line bundle on $X$ and let $D$ 
be a $\mathbb Q$-Cartier Weil divisor on $X$. 
Assume that one of the following conditions holds. 
\begin{itemize}
\item[(1)] $\mathcal L\cdot C+(D-(K_X+\Delta))
\cdot C>0$ for every $f$-exceptional 
curve $C$ on $X$ and the coefficients of $\Delta$ are 
less than or equal to one. 
\item[(2)] $\mathcal L\cdot C+(D-(K_X+\Delta))\cdot C\geq 0$ for every 
$f$-exceptional curve $C$ on $X$ and the 
coefficients of $\Delta$ are less than one. 
\end{itemize}
Then $R^if_*(\mathcal L\otimes \mathcal O_X(D))=0$ holds 
for every $i>0$. 
\end{thm}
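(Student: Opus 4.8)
The plan is to reduce the statement to the smooth-surface version Theorem \ref{f-thm11.1} by passing to a resolution of $X$. Since the assertion is local on $Y$, I would first shrink $Y$ around an arbitrary point $P$ and assume that $f$ is an isomorphism over $Y\setminus P$; as the fibres of $f$ are at most one-dimensional, only $i=1$ needs to be treated. Next I would choose a log resolution $\mu\colon V\to X$ of $(X,\Delta)$ and set $g:=f\circ\mu\colon V\to Y$, a proper bimeromorphic morphism from a smooth surface. The $g$-exceptional curves are the $\mu$-exceptional curves together with the strict transforms of the $f$-exceptional curves, and the whole point is to manufacture a line bundle $\mathcal L_V$ on $V$ for which Theorem \ref{f-thm11.1} applies to $g$ and which pushes down correctly under $\mu$.

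Concretely, writing $K_V+\Delta_V=\mu^*(K_X+\Delta)$ with $\mu_*\Delta_V=\Delta$, I would put $N:=\mu^*\bigl(\mathcal L+D-(K_X+\Delta)\bigr)$, a $\mathbb Q$-divisor, and define $\mathcal L_V:=\mu^*\mathcal L\otimes\mathcal O_V(D_V)$, where $D_V$ is an integral divisor obtained by rounding $\mu^*D-\Delta_V$, with the rounding error collected into an effective boundary $E=\sum_i b_iE_i$ supported on the $g$-exceptional curves. A direct computation gives $K_V+N=\mu^*\mathcal L+\mu^*D-\Delta_V$, so that after the rounding one arranges $\mathcal L_V\cdot E_i=(K_V+E+N)\cdot E_i$ for every $g$-exceptional $E_i$, which is precisely the numerical identity required in Theorem \ref{f-thm11.1}. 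The sign condition $N\cdot E_i\ge 0$ is immediate because $N$ is a $\mu$-pullback: it is $0$ on $\mu$-exceptional curves, while on the strict transform of an $f$-exceptional curve $C$ it equals $\bigl(\mathcal L+D-(K_X+\Delta)\bigr)\cdot C$, which is $\ge 0$ (case (2)) or $>0$ (case (1)) by hypothesis.

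With $\mathcal L_V$ in hand I would run the Leray spectral sequence $R^pf_*R^q\mu_*\mathcal L_V\Rightarrow R^{p+q}g_*\mathcal L_V$. The construction is to be made so that $\mu_*\mathcal L_V=\mathcal L\otimes\mathcal O_X(D)$ and $R^q\mu_*\mathcal L_V=0$ for $q>0$; the latter is itself an instance of Theorem \ref{f-thm11.1}, now applied to $\mu$ (on $\mu$-exceptional curves $N$ is numerically trivial, so the relevant sub-case is the one with a genuine fractional coefficient). Leray then degenerates to give $R^ig_*\mathcal L_V=R^if_*(\mathcal L\otimes\mathcal O_X(D))$, and a second application of Theorem \ref{f-thm11.1}, this time to $g$, yields $R^ig_*\mathcal L_V=0$. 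Combining the two gives the desired vanishing.

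The main obstacle I expect is the bookkeeping of the rounding: one must choose $D_V$ so that simultaneously $\mu_*\mathcal L_V=\mathcal L\otimes\mathcal O_X(D)$ holds and the coefficients $b_i$ of $E$ land in the admissible range of Theorem \ref{f-thm11.1}. The delicate points are the components of $\Delta$ of coefficient exactly one and the exceptional divisors of very negative discrepancy (recall that $(X,\Delta)$ is \emph{not} assumed log canonical here), since these control whether the boundary coefficients satisfy $0\le b_i<1$ or only $0<b_i\le 1$. This is exactly where the dichotomy between the hypotheses of (1) (coefficients $\le 1$ together with strict inequality) and (2) (coefficients $<1$ together with $\ge 0$) is used: in case (1) the strict inequality feeds into sub-case (2) or (3) of Theorem \ref{f-thm11.1}, whereas in case (2) the condition on the coefficients puts us directly into its sub-case (1).
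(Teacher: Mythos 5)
Your overall strategy---pass to a resolution $\mu$ of $X$, absorb the fractional part of $\mu^*D-\Delta_V$ into an exceptional boundary, and apply Theorem \ref{f-thm11.1} once to $\mu$ and once to $f\circ\mu$, gluing the two vanishings together by a Leray-type argument---is the same as the paper's, which works with the minimal resolution and the divisor $\Theta=\Delta_Z+\{-\varphi^*D\}$. But there is a genuine gap in your treatment of case (1). You propose to let the components of $\Delta$ with coefficient exactly one survive into the boundary $E$ and then invoke sub-case (2) or (3) of Theorem \ref{f-thm11.1}. Those sub-cases require $b_i>0$ for \emph{every} exceptional curve $E_i$, and in general the $(f\circ\mu)$-exceptional locus contains curves appearing with coefficient $0$ in $E$ (any exceptional curve not lying in $\Supp\Delta_V$ along which $\mu^*D$ is integral). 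With one coefficient equal to $1$ and another equal to $0$, none of the three sub-cases of Theorem \ref{f-thm11.1} applies and the argument stalls; nor can you round the coefficient-one component into $D_V$, since it is not $\mu$-exceptional and subtracting it would change $\mu_*\mathcal L_V$ in an essential way. The missing idea is the paper's preliminary perturbation: when $\lfloor\Delta\rfloor\neq 0$ (possible only under (1)), take an $f$-ample Cartier divisor $H$ and replace $\Delta$ by an effective $\Delta'$ with $\lfloor\Delta'\rfloor=0$ that is $\mathbb Q$-linearly equivalent to $\Delta+\varepsilon H$; the strict inequality in (1) is exactly what allows the intersection numbers to stay nonnegative after adding $\varepsilon H$, and one is then reduced to the situation of case (2), where sub-case (1) of Theorem \ref{f-thm11.1} suffices throughout.

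A secondary point: your claim that $D_V$ can be arranged so that $\mu_*\mathcal L_V=\mathcal L\otimes\mathcal O_X(D)$ is optimistic. Since $D_V$ is $\lceil\mu^*D\rceil$ minus an effective $\mu$-exceptional divisor, in general one only gets an inclusion $\mu_*\mathcal L_V\subseteq\mathcal L\otimes\mathcal O_X(D)$. This is fixable---the cokernel is a skyscraper sheaf, so $R^1f_*$ of the subsheaf still surjects onto $R^1f_*(\mathcal L\otimes\mathcal O_X(D))$, which is all you need---and indeed the paper argues exactly this way, by pushing forward the short exact sequence with third term $\varphi^*\mathcal L\otimes\mathcal O_{\lfloor\Theta\rfloor}(\lceil\varphi^*D\rceil)$ rather than asserting equality of direct images.
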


Let us prove Theorem \ref{f-thm11.3}. 
The following proof is essentially the same as that of 
\cite[Theorem 6.2]{fujino-tanaka}. 

\begin{proof}[Proof of Theorem \ref{f-thm11.3}]
We divide the proof into small steps. 
\setcounter{step}{0} 
\begin{step}
Without loss of generality, we can freely shrink $Y$ and assume that 
$Y$ is a small relatively compact Stein open subset 
of normal analytic surface. 
We may further assume that 
$f$ is an isomorphism 
outside $P\in Y$ and $f^{-1}(P)$ 
is one-dimensional. 
\end{step}
\begin{step}
When $\lfloor \Delta\rfloor \ne 0$, 
we can take an $f$-ample 
Cartier divisor $H$ (see (1)). 
Then we can find an effective $\mathbb Q$-divisor $\Delta'$ 
on $X$ such that 
$\lfloor \Delta'\rfloor=0$ and that $\Delta'$ is $\mathbb Q$-linearly 
equivalent to 
$\Delta+\varepsilon H$ for some $0<\varepsilon \ll 1$. 
More precisely, we take a general member $B$ of 
$|\lfloor \Delta\rfloor +mH|$ for some large 
positive integer $m$ and put 
$$\Delta'=\Delta-\frac{1}{m}\lfloor \Delta\rfloor +\frac{1}{m}B. 
$$ 
By replacing $\Delta$ with $\Delta'$, 
we can always assume that $\lfloor \Delta\rfloor=0$. 
\end{step}
\begin{step}
Let $\varphi\colon Z\to X$ be the minimal resolution of $X$. 
We set $K_Z+\Delta_Z:=\varphi^*(K_X+\Delta)$. 
We note that $\Delta_Z$ is effective by the negativity lemma. 
We note that 
$$
\varphi^*\mathcal L +\lceil \varphi^*D\rceil -(K_Z+\Delta_Z+\{-\varphi^*D\}) 
=\varphi^*(\mathcal L+D-(K_X+\Delta)). 
$$
We put $\Theta:=\Delta_Z+\{-\varphi^*D\}$. 
Then 
$$
\varphi^*\mathcal L +\lceil \varphi^*D\rceil -\lfloor \Theta
\rfloor=(K_Z+\{\Theta\})+\varphi^*(\mathcal L+D-(K_X+\Delta)). 
$$
We note that we can write $\{\Theta\}=E+M$ where 
$E$ is a $\varphi$-exceptional 
effective $\mathbb Q$-divisor 
with $\lfloor E\rfloor=0$ and 
$M$ is an effective $\mathbb Q$-divisor such that 
every irreducible component of $M$ is not $\varphi$-exceptional. 
Let us consider 
$$
0\to \varphi^*\mathcal L\otimes \mathcal O_Z(\lceil \varphi^*D\rceil 
-\lfloor \Theta\rfloor)\to 
\varphi^*\mathcal L\otimes \mathcal O_Z(\lceil \varphi^*D\rceil )\to 
\varphi^*\mathcal L\otimes \mathcal O_{\lfloor \Theta\rfloor} 
(\lceil \varphi^*D\rceil)\to 0. 
$$
By Theorem \ref{f-thm11.1}, 
we have 
$$
R^1\varphi_*\left ( 
\varphi^*\mathcal L\otimes \mathcal O_Z(\lceil \varphi^*D\rceil 
-\lfloor \Theta\rfloor)
\right) =0. 
$$ 
Therefore, we get the following short exact sequence 
$$
0\to \mathcal L \otimes 
\varphi_* \mathcal O_Z(\lceil \varphi^*D\rceil -\lfloor \Theta\rfloor) 
\to 
\mathcal L \otimes 
\varphi_* \mathcal O_Z(\lceil \varphi^*D\rceil)  
\to 
\mathcal L \otimes 
\varphi_* \mathcal O_{\lfloor \Theta\rfloor} 
(\lceil \varphi^*D\rceil )\to 0.  
$$ 
By construction, $\lfloor \Theta\rfloor$ is $\varphi$-exceptional. 
Therefore, 
$\mathcal L \otimes 
\varphi_* \mathcal O_{\lfloor \Theta\rfloor} 
(\lceil \varphi^*D\rceil )$ is a skyscraper sheaf on $X$. 
Thus we obtain the following surjection 
\begin{equation}\label{eq-2} 
R^1f_*\left(\mathcal L \otimes 
\varphi_* \mathcal O_Z
(\lceil \varphi^*D\rceil -\lfloor \Theta \rfloor)\right) 
\to R^1f_*(\mathcal L\otimes \mathcal O_X(D))\to 0 
\end{equation}
since $\varphi_*\mathcal O_Z(\lceil \varphi^*D\rceil)\simeq 
\mathcal O_X(D)$. 
By the Leray spectral sequence, 
we have 
\begin{equation}\label{eq-3} 
R^1f_*\left(\mathcal L \otimes 
\varphi_* \mathcal O_Z
(\lceil \varphi^*D\rceil -\lfloor \Theta \rfloor)\right) 
\subset 
R^1(f\circ \varphi)_* 
\left( 
\varphi^*\mathcal L\otimes \mathcal O_Z(\lceil \varphi^*D\rceil 
-\lfloor \Theta\rfloor)\right). 
\end{equation}
As before, 
we can write $\{\Theta\}=E'+M'$ where 
$E'$ is a $f\circ \varphi$-exceptional 
effective $\mathbb Q$-divisor 
with $\lfloor E'\rfloor=0$ and 
$M'$ is an effective $\mathbb Q$-divisor such that 
every irreducible component of $M'$ is not $f\circ \varphi$-exceptional. 
By Theorem \ref{f-thm11.1}, 
we know that 
$$
R^1(f\circ \varphi)_* 
\left( 
\varphi^*\mathcal L\otimes \mathcal O_Z(\lceil \varphi^*D\rceil 
-\lfloor \Theta\rfloor)\right)=0. 
$$
This implies that 
$$
R^1f_*\left(\mathcal L \otimes 
\varphi_* \mathcal O_Z
(\lceil \varphi^*D\rceil -\lfloor \Theta \rfloor)\right)=0
$$ 
by \eqref{eq-3}. 
By the surjection \eqref{eq-2}, 
we get $R^1f_*(\mathcal L\otimes \mathcal O_X(D))=0$. 
\end{step}
\begin{step}
Since $f^{-1}(P)$ is one-dimensional, 
$R^if_*(\mathcal L\otimes \mathcal O_X(D))=0$ for 
every $i>2$ by 
the theorem on formal functions 
for proper morphisms between 
analytic spaces 
(see \cite[Chapter VI, Corollary 
4.7]{banica-s}). 
\end{step} 
Therefore, we have 
$R^if_*(\mathcal L\otimes \mathcal O_X(D))=0$ 
for every $i>0$. 
\end{proof}

We close this section with an obvious remark. 

\begin{rem}\label{f-rem11.4}
Theorems \ref{f-thm11.1} and \ref{f-thm11.3} can be formulated and 
proved easily for proper birational morphisms between algebraic surfaces 
defined over any algebraically closed field. 
Therefore, we can formulate and prove Theorem \ref{f-thm3.11} for 
projective birational morphisms 
between (not necessarily complete) algebraic surfaces defined over any algebraically 
closed field. This is because the proof of Theorem \ref{f-thm3.11} 
only needs the vanishing theorem:~Theorem \ref{f-thm11.3}. 
\end{rem}

\section{Appendix:~Complete non-projective algebraic surfaces}\label{p-sec12}

In this section, we construct some examples of complete non-projective 
log canonical algebraic surfaces. 
From Example \ref{p-ex12.1} to Example \ref{p-ex12.5}, 
we will work over $\mathbb C$, the complex number field. 
\medskip 

Let us start with Koll\'ar's example. Although it is not 
stated explicitly in \cite{kollar}, 
it does not satisfy Kleiman's ampleness criterion.  
We note that the arguments in Example \ref{p-ex12.3} below 
help the reader understand Example \ref{p-ex12.1}. 
Therefore, we do not explain the details of Koll\'ar's example. 

\begin{ex}[{\cite[Aside 3.46]{kollar}}]\label{p-ex12.1}
In this example, we will freely use Koll\'ar's notation in \cite[Aside 3.46]{kollar}. 
In \cite[Aside 3.46]{kollar}, 
we assume that $C$ is an elliptic curve. 
Then the surface $S$ constructed in \cite[Aside 3.46]{kollar} 
is a complete non-projective 
algebraic surface with two simple 
elliptic singularities. 
In particular, $S$ is Gorenstein and log canonical. 
Let $C'$ be the strict transform of $\{1\}\times C$ on $S$. 
Then we have $\Pic (S)=\mathbb Z\mathcal O_S(C')$. 
We can directly check that 
$\pi_1(S)=\{1\}$, that is, $S$ is simply connected, 
$K_S\sim 0$, 
$H^1(S, \mathcal O_S)=0$, and $H^2(S, \mathcal O_S)=\mathbb C$. 
Therefore, $S$ is a log canonical 
Calabi--Yau algebraic surface. 
Of course, we have $\kappa (S, K_S)=0$. 
Let $F$ be a general fiber of the second projection 
$\mathbb P^1\times C\to C$ and let $F'$ 
be the strict transform of $F$ on $S$. 
Then $\NE(S)=\mathbb R_{\geq 0} [F']$ holds. 
We note that $\mathcal O_S(C')$ is positive 
on $\NE(S)\setminus \{0\}$. 
However, $C'$ is nef but is not ample. 
This means that 
Kleiman's ampleness criterion does not hold for $S$. 
\end{ex}

Let us prepare an easy lemma. 

\begin{lem}\label{p-lem12.2} 
Let $C$ be a smooth projective curve and let $\mathcal A$ 
be an ample 
line bundle 
on $C$. 
We consider $\pi\colon X:=\mathbb 
P_C(\mathcal O_C\oplus \mathcal 
A)\to C$. Then $\mathcal O_X(1)\simeq \mathcal O_X(C_+)$ is 
semi-ample, where $C_+:=\mathbb P_C(\mathcal A)$ is the positive 
section of $\pi$, and the complete linear system 
$|\mathcal O_X(m)|$ only contracts 
the negative section $C_-:=\mathbb P_C(\mathcal O_C)$ of 
$\pi$ to a point for some sufficiently large positive integer $m$. 
\end{lem}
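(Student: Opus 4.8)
The plan is to analyze the projective bundle $X = \mathbb{P}_C(\mathcal{O}_C \oplus \mathcal{A})$ directly through its two canonical sections and the standard intersection theory on ruled surfaces. First I would fix notation following Hartshorne: write $\xi$ for a divisor with $\mathcal{O}_X(\xi) \simeq \mathcal{O}_X(1)$, and note that the two natural sections correspond to the quotients $\mathcal{O}_C \oplus \mathcal{A} \twoheadrightarrow \mathcal{A}$ and $\mathcal{O}_C \oplus \mathcal{A} \twoheadrightarrow \mathcal{O}_C$. The first gives the section $C_+ = \mathbb{P}_C(\mathcal{A})$ with $\mathcal{O}_X(C_+) \simeq \mathcal{O}_X(1)$, and the second gives $C_- = \mathbb{P}_C(\mathcal{O}_C)$. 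I would record that $C_+ \sim \xi$ and $C_- \sim \xi - \pi^*\mathcal{A}$ (up to the usual identification), so that $C_+$ and $C_-$ are disjoint and $C_- \cdot C_+ = 0$.

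The key computation is that $\mathcal{O}_X(1) = \mathcal{O}_X(C_+)$ is globally generated, hence semi-ample. The cleanest route is to observe that $\pi_* \mathcal{O}_X(1) \simeq \mathcal{O}_C \oplus \mathcal{A}$, which is globally generated because $\mathcal{A}$ is ample (so $\mathcal{A}^{\otimes m}$, and eventually $\mathcal{O}_C \oplus \mathcal{A}$ after twisting, is generated by global sections) — more precisely, I would use that $\mathcal{O}_X(m) = \mathcal{O}_X(mC_+)$ has $\pi_* \mathcal{O}_X(m) \simeq \mathrm{Sym}^m(\mathcal{O}_C \oplus \mathcal{A}) = \bigoplus_{j=0}^m \mathcal{A}^{\otimes j}$, which is globally generated for $m \gg 0$ since each ample summand $\mathcal{A}^{\otimes j}$ with $j \geq 1$ is globally generated and the $j=0$ summand $\mathcal{O}_C$ is too. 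Global generation of $\pi_*\mathcal{O}_X(m)$ together with surjectivity of $\pi^*\pi_*\mathcal{O}_X(m) \to \mathcal{O}_X(m)$ then shows $\mathcal{O}_X(m)$ is basepoint-free, giving semi-ampleness of $\mathcal{O}_X(1)$.

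To identify exactly what the morphism defined by $|\mathcal{O}_X(m)|$ contracts, I would examine the restriction of $\mathcal{O}_X(1)$ to the two sections. On $C_+$ the bundle $\mathcal{O}_X(1)|_{C_+} \simeq \mathcal{A}$ is ample, so $C_+$ is embedded (it is not contracted), and more generally $\mathcal{O}_X(1)$ is positive on every fiber $F$ of $\pi$ since $\mathcal{O}_X(1) \cdot F = 1$. On the negative section, however, $\mathcal{O}_X(1)|_{C_-} \simeq \mathcal{O}_C$ is trivial, so $|\mathcal{O}_X(m)|$ contracts $C_-$ to a point. I would then argue that $C_-$ is the only curve contracted: any irreducible curve $Z$ with $\mathcal{O}_X(m) \cdot Z = 0$ must satisfy $\xi \cdot Z = 0$, and since $\xi \cdot F = 1$ and $\xi$ is nonnegative on all curves (being globally generated), combining $\xi \cdot Z = 0$ with the intersection numbers $\xi^2 = \deg \mathcal{A}$, $\xi \cdot C_- = 0$ forces $Z = C_-$ by a direct intersection-theoretic argument on the ruled surface.

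The main obstacle I anticipate is the last uniqueness step — verifying that $C_-$ is the \emph{only} curve contracted, rather than merely one contracted curve. This requires knowing the cone of curves (or at least the nef cone) of the ruled surface $X$ precisely: $N^1(X)$ is spanned by $\xi$ and $F$ with the nef cone generated by $F$ and $\xi$, and $C_- \equiv \xi - (\deg\mathcal{A}) F$ lies on the boundary ray where $\xi$ is zero. I would carry this out by computing $\xi \cdot Z$ for an arbitrary curve class $Z \equiv a\xi + bF$ (with $a \geq 0$ since $Z$ is effective and $F\cdot Z = a \geq 0$), showing $\xi \cdot Z = 0$ forces $a=0$ and hence $Z$ is numerically a multiple of $F$ — but $\xi \cdot F = 1 \neq 0$, a contradiction unless the curve is $C_-$ itself after accounting for the self-intersection $C_-^2 = -\deg\mathcal{A} < 0$; this negativity is what singles out $C_-$ as the unique contracted curve. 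Once the numerical class of any contracted curve is pinned to that of $C_-$, irreducibility and the negative self-intersection finish the identification.
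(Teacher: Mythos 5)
Your overall strategy coincides with the paper's (which simply cites \cite[Lemma 2.3.2]{lazarsfeld} for semi-ampleness and then records that $\mathcal O_X(1)\cdot C_-=0$ while $\mathcal O_X(1)\cdot D>0$ for every irreducible curve $D\ne C_-$), and your intersection-theoretic identification of $C_-$ as the unique contracted curve is correct and in fact more detailed than what the paper writes. However, there is one step in your semi-ampleness argument that is false as stated: you claim that each summand $\mathcal A^{\otimes j}$ with $j\geq 1$ of $\pi_*\mathcal O_X(m)\simeq \bigoplus_{j=0}^{m}\mathcal A^{\otimes j}$ is globally generated because it is ample. On a curve of positive genus an ample line bundle need not be globally generated; a degree-one line bundle on an elliptic curve has $h^0\le 1$ and its section vanishes at a point. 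Since a direct sum of line bundles is globally generated only if every summand is, and the fixed summand $\mathcal A^{\otimes 1}$ appears for every $m\ge 1$, the sheaf $\pi_*\mathcal O_X(m)$ is \emph{never} globally generated when $\mathcal A$ itself is not. This is not a hypothetical worry: in Example \ref{p-ex12.3} the lemma is applied with $\deg \mathcal A_i=1$ on a curve of genus $\geq 1$, which is exactly the bad case. So the mechanism ``global generation of $\pi_*\mathcal O_X(m)$ plus surjectivity of $\pi^*\pi_*\mathcal O_X(m)\to \mathcal O_X(m)$'' does not run.

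The repair is local: you do not need the whole pushforward to be globally generated, only one section of $\mathcal O_X(m)$ through each point. The constant $1\in H^0(C,\mathcal O_C)$, viewed in the $j=0$ summand, gives a section of $\mathcal O_X(m)$ with zero divisor $mC_+$, which handles every $x\notin C_+$. For $x\in C_+$, choose $m$ large enough that $\mathcal A^{\otimes m}$ is globally generated and take $t\in H^0(C,\mathcal A^{\otimes m})$ with $t(\pi(x))\ne 0$; the corresponding section in the $j=m$ summand has zero divisor $mC_-+\pi^*\mathrm{div}(t)$, which misses $x$. Hence $|\mathcal O_X(m)|$ is basepoint-free for $m\gg 0$. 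With that substitution your proof is complete, and the uniqueness step (writing $Z\equiv a\xi+bF$, deducing $Z\equiv aC_-$ from $\xi\cdot Z=0$, and using $C_-^2=-\deg\mathcal A<0$ to force $Z=C_-$) goes through exactly as you describe.
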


\begin{proof}
We can easily check that $\mathcal O_X(1)$ is semi-ample (see 
\cite[Lemma 2.3.2]{lazarsfeld}). 
We note that $C_-\cdot \mathcal O_X(1)=0$ and 
$D\cdot \mathcal O_X(1)>0$ for every irreducible 
curve $D$ on $X$ with $D\ne C_-$. 
Therefore, the complete linear system $|\mathcal O_X(m)|$ 
contracts $C_-$ only. 
\end{proof}

Let us construct complete non-projective normal algebraic surfaces $S$ with 
$\Pic(S)=\{0\}$. 
The following construction was suggested by Kento Fujita. 

\begin{ex}\label{p-ex12.3}
Let $C$ be a smooth projective curve of genus $\geq 1$ and 
let $\mathcal L=\mathcal O_C(L)$ be a non-torsion 
element of $\Pic^0(C)$. 
We considr $\pi\colon V:=\mathbb P_C(\mathcal O_C\oplus 
\mathcal L)\to C$. 
Let $C_1$ (resp.~$C_2$) be the section of $\pi$ corresponding to 
$\mathcal O_C\oplus \mathcal L\to \mathcal O_C\to 0$ (resp.~$\mathcal O_C
\oplus \mathcal L\to \mathcal L\to 0$). 
We note that $C_2\sim \pi^*L+C_1$ holds. 
We take an arbitrary point $P\in C$ and blow up 
$P_1$ and $P_2$, where $P_i:=
\pi^{-1}(P)\cap C_i$ for $i=1, 2$, to get $p\colon W\to V$. 
Let $C_i'$ be the strict transform of $C_i$ on $W$ for $i=1, 2$ 
and let $\ell$ be the strict transform of $\pi^{-1}(P)$ on $W$. 
Let $E_i$ denote the $(-1)$-curve on $W$ with $p(E_i)=P_i$ for $i=1, 2$. 
We put  
$$
U_1:=W\setminus (C_2'\cup E_2\cup \ell), \quad 
U_2:=W\setminus (C_1'\cup E_1\cup \ell), \quad 
\text{and} 
\quad 
U_0:=W\setminus (C_1'\cup C_2'). 
$$
We note that $C_i'\subset U_i$ for $i=1,2$ and 
$\ell \subset U_0$ by construction. 
Then we can realize $U_i$ as a Zariski open subset of $\mathbb P_C(\mathcal O_C\oplus \mathcal A_i)$ with 
$\deg \mathcal A_i=1$ such that $C_i'$ corresponds to 
the negative section $C_-$ on $\mathbb P_C(\mathcal O_C\oplus \mathcal A_i)$ 
for $i=1, 2$. 
By Lemma \ref{p-lem12.2}, 
we can construct a projective birational morphism 
$U_i\to S_i$ onto a normal quasi-projective surface $S_i$ such that the 
exceptional locus is $C_i'$ for $i=1, 2$. 
Since $\ell$ is a $(-2)$-curve on a smooth 
projective surface $W$, 
we can construct a projective birational morphism 
$U_0\to S_0$ onto a normal 
quasi-projective surface $S_0$ such that the exceptional 
locus is $\ell$ and that 
$\ell$ is contracted to an $A_1$ singularity. 
Now $S_0$, $S_1$, and $S_2$ can be glued together 
to get a birational contraction morphism 
$q\colon W\to S$ onto a complete normal 
algebraic surface $S$ which only contracts $C_1'$, $C_2'$, and $\ell$. 
\begin{claim}\label{p-ex12.3-claim}
$\Pic(S)=\{0\}$ holds. 
\end{claim}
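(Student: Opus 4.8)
The plan is to use the contraction morphism $q\colon W\to S$ to compare $\Pic(S)$ with the well-understood Picard group of the smooth surface $W$. First I would record that, since $S$ is normal and $q$ is a proper birational morphism with connected fibers, $q_*\mathcal O_W\simeq \mathcal O_S$; by the projection formula this makes the pullback $q^*\colon \Pic(S)\to \Pic(W)$ injective. Hence it suffices to show that every $\mathcal M\in\Pic(S)$ pulls back to the trivial bundle on $W$. The key observation is that $q$ contracts exactly the three curves $C_1'$, $C_2'$, and $\ell$, each to a single point, so $\mathcal N:=q^*\mathcal M$ necessarily restricts to a trivial line bundle on each of $C_1'$, $C_2'$, and $\ell$.

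Next I would make $\Pic(W)$ explicit. Writing $f:=\pi^{-1}(P)$, we have $\Pic(V)=\pi^*\Pic(C)\oplus\mathbb Z[C_1]$ and $\Pic(W)=p^*\Pic(V)\oplus\mathbb Z[E_1]\oplus\mathbb Z[E_2]$, so any element can be written as $\mathcal N=p^*(\pi^*\mathcal B\otimes\mathcal O_V(aC_1))\otimes\mathcal O_W(b_1E_1+b_2E_2)$ with $\mathcal B\in\Pic(C)$ and $a,b_1,b_2\in\mathbb Z$. I would also record the divisor classes $C_1'=p^*C_1-E_1$, $C_2'=p^*C_2-E_2$ (with $C_2\sim\pi^*L+C_1$), and $\ell=p^*f-E_1-E_2$, together with the elementary intersection data on the ruled surface $V$: the two sections are disjoint with $C_1^2=C_2^2=0$, and $C_i\cdot f=1$, while on $W$ one has $E_i^2=-1$ and $E_i\cdot C_i'=1$ with intersection point lying over $P$.

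The heart of the argument, and the step I expect to be the main obstacle, is to compute the restrictions $\mathcal N|_{C_1'}$ and $\mathcal N|_{C_2'}$ as genuine line bundles on the genus $\geq 1$ curve $C$, not merely as degrees. Using the normal bundle identification $\mathcal O_V(C_1)|_{C_1}\simeq\mathcal L^{-1}$ and the disjointness $C_1\cap C_2=\emptyset$, together with $\mathcal O_W(E_i)|_{C_i'}\simeq\mathcal O_C(P)$, I would obtain $\mathcal N|_{C_1'}\simeq \mathcal B\otimes\mathcal L^{-a}\otimes\mathcal O_C(b_1P)$ and $\mathcal N|_{C_2'}\simeq\mathcal B\otimes\mathcal O_C(b_2P)$, while $\mathcal N\cdot\ell=a+b_1+b_2$. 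Triviality on $\ell$ (which is automatic once the degree vanishes, since $\ell\simeq\mathbb P^1$) gives $a+b_1+b_2=0$; triviality on $C_2'$ gives $\mathcal B\simeq\mathcal O_C(-b_2P)$; substituting into the $C_1'$-condition yields $\mathcal L^{-a}\otimes\mathcal O_C((b_1-b_2)P)\simeq\mathcal O_C$, and comparing degrees (recall $\deg\mathcal L=0$) forces $b_1=b_2$, hence $\mathcal L^{-a}\simeq\mathcal O_C$.

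It is precisely here that the hypothesis that $\mathcal L$ is a non-torsion element of $\Pic^0(C)$ is indispensable: it forces $a=0$, whence $b_1=b_2=0$ follows from $a+b_1+b_2=0$ and $b_1=b_2$, and then $\mathcal B\simeq\mathcal O_C$. Thus $\mathcal N=q^*\mathcal M$ is trivial, and by the injectivity of $q^*$ established at the outset we conclude $\mathcal M\simeq\mathcal O_S$. Since $\mathcal M$ was arbitrary, $\Pic(S)=\{0\}$, as claimed. The only routine point I would still need to verify is the sign convention in the normal-bundle identification $\mathcal O_V(C_1)|_{C_1}\simeq\mathcal L^{\pm1}$, but this does not affect the conclusion, since in either case a nonzero power of the non-torsion bundle $\mathcal L$ is forced to be trivial.
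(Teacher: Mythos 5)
Your proposal is correct and follows essentially the same route as the paper: pull back a line bundle (divisor) to $W$, decompose it via $\Pic(W)=p^*\Pic(V)\oplus\mathbb Z E_1\oplus\mathbb Z E_2$ and $\Pic(V)=\pi^*\Pic(C)\oplus\mathbb Z[C_1]$, and exploit triviality of the restrictions to the three contracted curves $C_1'$, $C_2'$, $\ell$ together with $\deg\mathcal L=0$ and the non-torsion hypothesis to force all parameters to vanish. Your computations of the restrictions (including $\mathcal O_V(C_1)|_{C_1}\simeq\mathcal L^{-1}$ and $\mathcal N\cdot\ell=a+b_1+b_2$) match the paper's identities $\mathcal O_C(M)\simeq\mathcal O_C(m_2P)$, $\mathcal O_C(M-kL)\simeq\mathcal O_C(m_1P)$, and $m_1+m_2=k$, so no further changes are needed.
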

\begin{proof}[Proof of Claim \ref{p-ex12.3-claim}]
We take an arbitrary Cartier divisor $D$ on $S$. 
We put $\overline D:=q^*D$ and $D^\dag:=p_*\overline D$. 
Then we can write
$D^\dag\sim \pi^*M+kC_1$ for some 
integer $k$ and some Cartier divisor $M$ on $C$ because 
$V$ is $\mathbb P_C(\mathcal O_C\oplus \mathcal L)$. 
We note that we can write 
$$
p^*D^\dag =\overline D+m_1E_1+m_2E_2 
$$ 
for some integers $m_1$ and $m_2$. 
By construction, $\mathcal O_W(\overline D)|_{C_i'}$ is trivial for $i=1,2$. 
Therefore, 
we have 
$$
\mathcal O_C(M)\simeq \mathcal O_V(D^\dag)|_{C_2}
\simeq \mathcal O_W(p^*D^\dag)|_{C_2'}\simeq 
\mathcal O_W(\overline D+m_1E_1+m_2E_2)|_{C_2'}\simeq 
\mathcal O_C(m_2P)
$$
and 
$$
\mathcal O_C(M-kL)\simeq \mathcal O_V(D^\dag)|_{C_1}
\simeq \mathcal O_W(p^*D^\dag)|_{C_1'}\simeq 
\mathcal O_W(\overline D+m_1E_1+m_2E_2)|_{C_1'}\simeq 
\mathcal O_C(m_1P). 
$$
This implies that $\mathcal O_C(kL)\simeq \mathcal O_C((m_2-m_1)P)$. 
Since $\deg L=0$, we have $m_1=m_2$. 
By assumption, $\mathcal L=\mathcal O_C(L)$ is a non-torsion 
element of $\Pic^0(C)$. 
Thus, we get $k=0$. 
By construction again, we have $\overline D\cdot \ell=0$. 
Therefore, 
$$
0=k=p^*D^\dag\cdot \ell =(\overline D+m_1E_1+m_2E_2)\cdot \ell=m_1+m_2
$$ 
holds. 
Since we have already known that $m_1=m_2$ holds, 
we obtain that $m_1=m_2=0$. 
Thus, we have $\overline D=p^*D^\dag\sim p^*\pi^*M$. 
Since $\mathcal O_W(\overline D)|_{C_i'}$ is trivial for $i=1, 2$, 
we see that $\mathcal O_C(M)$ is trivial. 
Therefore, we obtain $\overline D\sim 0$. 
So we get $D\sim 0$. 
This means that $\Pic (S)=\{0\}$. 
\end{proof}
By Claim \ref{p-ex12.3-claim}, 
we see that $S$ is obviously non-projective. 
By construction, we can see that $\pi_1(S)=\{1\}$, that is, 
$S$ is simply connected. 

From now on, we assume that $C$ is an elliptic curve. 
Then $S$ has two simple elliptic singularities and one 
$A_1$ singularity. 
Moreover, $S$ is Gorenstein and $K_S\sim 0$. 
We note that $K_V+C_1+C_2 \sim 0$ and 
$K_W+C_1'+C_2'\sim 0$ by construction.  
We can easily check that 
$H^1(S, \mathcal O_S)=0$ and $H^2(S, \mathcal O_S)=\mathbb C$ by 
using the Leray spectral sequence. 
Therefore, $S$ is a complete non-projective log canonical 
Calabi--Yau algebraic surface with $\Pic(S)=\{0\}$. 
\end{ex}

From now on, by taking blow-ups of $S$ in Example 
\ref{p-ex12.3}, 
we construct complete non-projective 
algebraic surfaces with large Picard number. 

\begin{ex}\label{p-ex12.4}
Let $S$ be the surface constructed in Example \ref{p-ex12.3}, which 
is birationally equivalent to $\mathbb P_C(\mathcal O_C\oplus \mathcal L)$. 
As we saw above, 
$\Pic (S)=\{0\}$ holds. 
We take the minimal resolution $\mu\colon \widetilde S\to S$ of the 
unique $A_1$ singularity of $S$. 
Since $S$ is simply connected, so is $\widetilde S$. 
Let $E$ denote the exceptional curve of $\mu$. 
Of course, $E$ is the strict transform of $\ell$ 
in Example \ref{p-ex12.3} and is a $(-2)$-curve on $\widetilde S$. 
\begin{claim}\label{p-ex12.4-claim} 
$\Pic (\widetilde S)=\mathbb Z\mathcal O_{\widetilde S}(E)$ holds. 
\end{claim}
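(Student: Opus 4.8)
The plan is to factor the contraction $q\colon W\to S$ of Example \ref{p-ex12.3} through $\widetilde S$ and then to repeat the computation of Claim \ref{p-ex12.3-claim} almost verbatim, the only genuine change being the treatment of the curve $\ell$. First I would pin down the geometry. Since $\mu\colon\widetilde S\to S$ is the minimal resolution of the $A_1$ singularity alone (the image of $\ell$) and leaves the two simple elliptic singularities (the images of $C_1'$ and $C_2'$) untouched, $\widetilde S$ is obtained from $W$ by contracting $C_1'$ and $C_2'$ only. Write $r\colon W\to\widetilde S$ for this contraction, so that $q=\mu\circ r$. The three curves $C_1'$, $C_2'$, $\ell$ are pairwise disjoint on $W$, because blowing up $P_1$ and $P_2$ separates each section from the fiber; hence $r$ is an isomorphism near $\ell$, and $r^*\mathcal O_{\widetilde S}(E)=\mathcal O_W(\ell)$ for $E=r(\ell)$.

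Next I would rerun the Claim \ref{p-ex12.3-claim} computation for an arbitrary Cartier divisor $\widetilde D$ on $\widetilde S$. Put $\overline D:=r^*\widetilde D$ and $D^\dag:=p_*\overline D$, and write $D^\dag\sim\pi^*M+kC_1$ together with $p^*D^\dag=\overline D+m_1E_1+m_2E_2$. Because $C_1'$ and $C_2'$ are $r$-exceptional, $\mathcal O_W(\overline D)|_{C_i'}$ is trivial, so restricting $p^*D^\dag$ to $C_2'$ and to $C_1'$ gives $\mathcal O_C(M)\simeq\mathcal O_C(m_2P)$ and $\mathcal O_C(M-kL)\simeq\mathcal O_C(m_1P)$ exactly as before. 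Comparing degrees and using $\deg L=0$ forces $m_1=m_2$, and then the non-torsion hypothesis on $\mathcal L$ forces $k=0$; in particular $M\sim m_1P$ on $C$.

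The point where the argument departs from Claim \ref{p-ex12.3-claim} is that $\overline D\cdot\ell$ no longer vanishes, since $r$ does not contract $\ell$. Instead I would compute $\overline D$ directly: from $k=0$ we get $D^\dag\sim\pi^*M\sim m_1\pi^*P$, hence $p^*D^\dag\sim m_1(\ell+E_1+E_2)$ and therefore
\[
\overline D=p^*D^\dag-m_1E_1-m_2E_2\sim m_1\ell .
\]
Since $r^*\mathcal O_{\widetilde S}(E)=\mathcal O_W(\ell)$, this reads $r^*\bigl(\mathcal O_{\widetilde S}(\widetilde D-m_1E)\bigr)\simeq\mathcal O_W$, and applying $r_*$ together with $r_*\mathcal O_W=\mathcal O_{\widetilde S}$ yields $\widetilde D\sim m_1E$. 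Thus $m\mapsto\mathcal O_{\widetilde S}(mE)$ is a surjection $\mathbb Z\twoheadrightarrow\Pic(\widetilde S)$; it is injective because $E^2=-2\neq0$ rules out any relation $\mathcal O_{\widetilde S}(mE)\simeq\mathcal O_{\widetilde S}$ with $m\neq0$. Hence $\Pic(\widetilde S)=\mathbb Z\mathcal O_{\widetilde S}(E)$.

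I do not expect a serious obstacle, as the whole argument is a mild modification of the already-established Claim \ref{p-ex12.3-claim}. The two new ingredients I would be careful about are the observation that $C_1'$, $C_2'$, $\ell$ are pairwise disjoint on $W$, which makes $r$ an isomorphism near $\ell$ and the descent $\overline D\sim m_1\ell\Rightarrow\widetilde D\sim m_1E$ clean, and the final injectivity, which is immediate from $E^2=-2$.
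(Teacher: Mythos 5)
Your proof is correct, but it takes a genuinely different route from the paper's. The paper never goes back to $W$: it applies Theorem \ref{f-thm3.11} to the contraction $\mu\colon \widetilde S\to S$ (noting $-E$ and $-(K_{\widetilde S}+E)$ are $\mu$-ample) to descend line bundles $\mathcal N$ with $\mathcal N\cdot E=0$ to $S$, where $\Pic(S)=\{0\}$ forces them to be trivial; this gives $\Pic(\widetilde S)\otimes\mathbb Q=\mathbb Q\,\mathcal O_{\widetilde S}(E)$ and torsion-freeness, so $\Pic(\widetilde S)=\mathbb Z\mathcal M$ with $\mathcal O_{\widetilde S}(E)\simeq\mathcal M^{\otimes a}$, and a short arithmetic argument ($aE\cdot\mathcal M=E^2=-2$ forces $a\mid 2$, while $a=\pm2$ would give $4\mathcal M^2=-2$) pins down $a=\pm1$. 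You instead factor $q=\mu\circ r$ with $r\colon W\to\widetilde S$ contracting only $C_1'$ and $C_2'$, and rerun the explicit $\Pic(W)$ computation of Claim \ref{p-ex12.3-claim}: the only change is that $\overline D\cdot\ell$ need not vanish, and from $k=0$, $m_1=m_2$ you get $\overline D\sim m_1\ell$, hence $\widetilde D\sim m_1E$ after pushing forward. Your two preparatory observations are both valid: $C_1'$, $C_2'$, $\ell$ are indeed pairwise disjoint on $W$ (the sections of $\mathbb P_C(\mathcal O_C\oplus\mathcal L)$ are disjoint and the blow-ups at $P_1$, $P_2$ separate $\ell$ from them), so $r$ is an isomorphism near $\ell$ and $r^*E=\ell$, and the intermediate surface obtained by contracting only $C_1'$ and $C_2'$ is precisely the minimal resolution $\widetilde S$ of the $A_1$ point. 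What each approach buys: the paper's is shorter given that Theorem \ref{f-thm3.11} and Claim \ref{p-ex12.3-claim} are already in place, and the same descent mechanism is reused verbatim in Example \ref{p-ex12.5}; yours delivers surjectivity of $m\mapsto\mathcal O_{\widetilde S}(mE)$ in one stroke, avoiding the separate torsion-freeness and index arguments, at the cost of redoing the divisor computation on $W$.
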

\begin{proof}[Proof of 
Claim \ref{p-ex12.4-claim}] 
By construction, we see that $E\simeq \mathbb P^1$ and that 
$-E$ and $-(K_{\widetilde S}+E)$ are both $\mu$-ample. 
Thus, 
by Theorem \ref{f-thm3.11} (see also Remark \ref{f-rem11.4}), 
we have $\Pic (\widetilde S)\otimes \mathbb Q=\mathbb Q 
\mathcal O_{\widetilde S}(E)$ since $\Pic (S)=\{0\}$. 
Let $\mathcal N$ be a torsion element of $\Pic(\widetilde S)$. 
Then we have $\mathcal N\cdot E=0$. 
Therefore, by Theorem \ref{f-thm3.11} again, 
$\mathcal N$ is trivial. 
This means that $\Pic(\widetilde S)$ is torsion-free. 
Thus, we can write $\Pic(\widetilde S)=\mathbb Z
\mathcal M$ for some line bundle $\mathcal M$ on $\widetilde S$. 
Therefore, there exists some integer $a$ such that 
$\mathcal O_{\widetilde S}(E)\simeq 
\mathcal M^{\otimes a}$. 
Since $E^2=-2$, we have $aE\cdot \mathcal M=-2$. 
Note that $E\cdot \mathcal M$ is an integer. 
If $a=\pm 2$, then $-2=E^2=a^2\mathcal M^2=4\mathcal M^2$. 
This is a contradiction because 
$\mathcal M^2$ is an integer. 
Thus, we get $a=\pm 1$. 
This means that $\Pic 
(\widetilde S)=\mathbb Z \mathcal O_{\widetilde S} (E)$. 
\end{proof}
By Claim \ref{p-ex12.4-claim}, 
we have $N^1(\widetilde S)=\mathbb R$ and 
$\NE(\widetilde S)=N_1(\widetilde S) =\mathbb R$. 
We note that $E^2=-2$ and that 
there exists a curve $G$ on $\widetilde S$ such that 
$G\cdot E>0$. 

We further assume that $C$ is an elliptic curve. 
Then $K_{\widetilde S}\sim 0$ and $\widetilde S$ has only 
two simple elliptic singularities. 
Therefore, $\widetilde S$ is a complete non-projective 
log canonical Calabi--Yau algebraic surface with 
$\NE(\widetilde S)=\mathbb R$. 
\end{ex}

\begin{ex}\label{p-ex12.5}
Let $\widetilde S$ be the surface constructed in Example 
\ref{p-ex12.4}. 
We take finitely many smooth points 
$Q_1, Q_2, \ldots, Q_{\rho-1}$ of $\widetilde S$ with 
$\rho \geq 2$ such that $Q_i\ne Q_j$ for $i\ne j$ and 
$Q_i\not\in E$ for every $i$. 
We blow up $Q_1, Q_2, \ldots, Q_{\rho-1}$ to get 
$\sigma \colon \overline S\to \widetilde S$. 
Let $B_i$ denote the $(-1)$-curve on $\overline S$ with 
$\sigma(B_i)=Q_i$ for 
every $i$. 
Then, by Theorem \ref{f-thm3.11} (see also Remark 
\ref{f-rem11.4}), 
we can easily check 
that 
$$
\Pic (\overline S)=\mathbb Z\mathcal O_{\overline S} (\sigma ^*E) 
\oplus \mathbb Z \mathcal O_{\overline S} (B_1)\oplus 
\cdots \oplus \mathbb Z\mathcal O_{\overline S}(B_{\rho-1}). 
$$ 

\begin{claim}\label{p-ex12.5-claim} 
$\NE(\overline S)=N_1(\overline S)=\mathbb R^\rho$ holds. 
\end{claim}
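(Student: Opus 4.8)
The plan is to prove the two equalities separately: first that $N_1(\overline S)=\mathbb R^\rho$, which is a dimension count, and then that $\NE(\overline S)$ fills the whole space, which I would deduce by showing that its dual cone, the nef cone of $\overline S$, is trivial.

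First I would record that $\overline S$ is a complete algebraic, hence Moishezon, surface, so the intersection pairing $N_1(\overline S)\times N^1(\overline S)\to \mathbb R$ is perfect. Using $\Pic(\overline S)=\mathbb Z\mathcal O_{\overline S}(\sigma^*E)\oplus\bigoplus_{i=1}^{\rho-1}\mathbb Z\mathcal O_{\overline S}(B_i)$ established above, the classes $\sigma^*E,B_1,\dots,B_{\rho-1}$ have Gram matrix $\mathrm{diag}(-2,-1,\dots,-1)$, since $(\sigma^*E)^2=E^2=-2$, $\sigma^*E\cdot B_i=0$, and $B_i\cdot B_j=-\delta_{ij}$. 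This matrix is non-degenerate, so these $\rho$ classes are numerically independent; as they generate $\Pic(\overline S)\otimes\mathbb R$, which has rank $\rho$, I conclude $\dim_{\mathbb R}N^1(\overline S)=\dim_{\mathbb R}N_1(\overline S)=\rho$, i.e. $N_1(\overline S)=\mathbb R^\rho$.

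For the main equality, since $\NE(\overline S)$ is a closed convex cone whose dual cone is exactly the nef cone $\{H\in N^1(\overline S):H\cdot C\geq 0 \text{ for every curve }C\}$, the bipolar theorem reduces the claim $\NE(\overline S)=N_1(\overline S)$ to showing that the only nef class is $0$. So let $H$ be nef and write $H\equiv\alpha\,\sigma^*E+\sum_i\beta_i B_i$. Testing $H$ against the effective curve $B_i$ gives $H\cdot B_i=-\beta_i\geq 0$, so $\beta_i\leq 0$; testing against the effective curve $\sigma^*E$ (the strict transform of the $(-2)$-curve $E$, which is disjoint from the $Q_j$) gives $H\cdot\sigma^*E=-2\alpha\geq 0$, so $\alpha\leq 0$. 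Next I would use the curve $G$ on $\widetilde S$ with $G\cdot E>0$ furnished by Example \ref{p-ex12.4}: a short intersection computation shows that its strict transform $\widetilde G$ has class $\lambda\,\sigma^*E-\sum_i m_iB_i$ in $N_1(\overline S)$ with $\lambda=-\tfrac12(G\cdot E)<0$ and $m_i=\mathrm{mult}_{Q_i}G\geq 0$, whence $0\leq H\cdot\widetilde G=-2\lambda\alpha+\sum_i m_i\beta_i$. As $-2\lambda>0$, $\alpha\leq 0$, $m_i\geq 0$, and $\beta_i\leq 0$, both summands are $\leq 0$, forcing $\alpha=0$.

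It then remains to force each $\beta_i=0$. Here I would invoke that $\widetilde S$ is covered by compact curves, for instance by the images of the fibers of the ruling $V=\mathbb P_C(\mathcal O_C\oplus\mathcal L)\to C$, none of which is contracted by the morphism $W\to\widetilde S$; hence through every smooth point $Q_i$ there passes a curve $D_i$ on $\widetilde S$. Its strict transform satisfies $H\cdot\widetilde{D_i}=\sum_j\beta_j\,\mathrm{mult}_{Q_j}(D_i)\geq 0$ (the $\sigma^*E$-component is irrelevant since $\alpha=0$), and because every term is $\leq 0$ while $\mathrm{mult}_{Q_i}(D_i)\geq 1$, this forces $\beta_i=0$. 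Therefore $H\equiv 0$, the nef cone is $\{0\}$, and so $\NE(\overline S)=N_1(\overline S)=\mathbb R^\rho$. I expect the last step to be the main obstacle: one must genuinely exhibit a curve through each of the arbitrarily prescribed points $Q_i$ — which the covering by fibers of the ruling supplies — and keep careful track of the $N_1$-classes of strict transforms throughout.
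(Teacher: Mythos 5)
Your proof is correct, and it reaches the same reduction as the paper (the claim is equivalent to the nef cone of $\overline S$ being $\{0\}$, and nefness against $\sigma^*E$ and the $B_i$ forces all coefficients to be $\leq 0$ because the Gram matrix is $\mathrm{diag}(-2,-1,\dots,-1)$), but you finish differently. The paper argues in one stroke: if the nef class $D=b_0\sigma^*E+\sum b_iB_i$ is nonzero, then $-D$ is a nonzero effective divisor supported in the smooth locus, which is quasi-projective, so there is an irreducible curve $G$ meeting $\Supp D$ but not contained in it, whence $D\cdot G<0$ --- a contradiction. You instead eliminate the coefficients in two stages with explicitly constructed curves: the curve $G$ with $G\cdot E>0$ from Example \ref{p-ex12.4} kills $\alpha$ (your computation of $[\widetilde G]=\lambda\sigma^*E-\sum m_iB_i$ via the nondegenerate pairing is right), and the images of the fibers of the ruling, which pass through every point of $\widetilde S$ and are not contracted by $W\to\widetilde S$, kill each $\beta_i$. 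Your route is more computational but entirely elementary and self-contained; the paper's single-curve trick is slicker but quietly relies on the quasi-projectivity of the smooth locus to produce the auxiliary curve. A further small merit of your write-up is that you actually verify $\dim_{\mathbb R}N_1(\overline S)=\rho$ via the nondegeneracy of the intersection form on the given basis of $\Pic(\overline S)$, a point the paper leaves implicit.
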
 
\begin{proof}[Proof of Claim \ref{p-ex12.5-claim}]
Let $D$ be a nef $\mathbb R$-Cartier 
$\mathbb R$-divisor on $\overline S$. 
It is sufficient to 
prove that $D$ is numerically trivial. 
By the above description of 
$\Pic (\overline S)$, 
we can assume that $D=b_0 \sigma^*E+\sum _{i=1}^{\rho-1} b_i B_i$, 
where $b_i\in \mathbb R$ for every $i$. 
Since $D\cdot B_i\geq 0$ for every $i$ and 
$D\cdot \sigma^*E\geq 0$, 
$b_i\leq 0$ holds for every $i$. 
We assume that $D\ne 0$. 
Then we can take an irreducible curve $G$ on $\overline S$ such that 
$G\not\subset \Supp D$ and that $G\cap \Supp D\ne \emptyset$. 
This is because the smooth locus of $\overline S$ 
is a quasi-projective open subset of $\overline S$ and contains 
$\sigma^*E$ and $B_i$ for every $i$. 
Thus we get 
$D\cdot G<0$. 
This is a contradiction. 
This means that $D=0$. 
Hence we have 
$\NE(\overline S)=N_1(\overline S)=\mathbb R^\rho$. 
\end{proof}

If $C$ is an elliptic curve, then 
$\overline S$ is a complete non-projective 
log canonical algebraic surface 
with $\NE(\overline S)=N_1(\overline S)=\mathbb R^\rho$. 
By construction, we see that $K_{\overline S}=\sum _{i=1}^{\rho-1} B_i$. 
We can apply the minimal model program established in Theorem 
\ref{f-thm1.5} to a complete non-projective 
log canonical algebraic surface $\overline S$. 
Then every $B_i$ is contracted to a smooth point and we finally get a 
good minimal model $\widetilde S$, 
which is a surface with $K_{\widetilde S}\sim 0$ 
constructed in Example \ref{p-ex12.4}. 
\end{ex}

The following commutative diagram may help the reader understand 
the constructions in Examples \ref{p-ex12.3}, 
\ref{p-ex12.4}, and \ref{p-ex12.5}. 

$$
\xymatrix{
&W\ar[dl]_-p\ar[dr]_-q\ar[rr]& &\widetilde S \ar[dl]^-\mu& \overline S
\ar[l]_-\sigma\\
V\ar[d]_-\pi && S && \\ 
C &&&& 
}
$$

The reader can find various examples of complete 
non-projective toric threefolds $X$ with $\Pic(X)=\{0\}$,
$\NE(X)=\mathbb R_{\geq 0}$, or 
$\NE(X)=N_1(X)$ in \cite{fujino-kleiman} and 
\cite{fujino-payne}. 

\medskip 

Finally, we make a remark on complete non-projective 
algebraic surfaces defined over 
an algebraically closed filed $k$ with $k\ne \mathbb C$. 

\begin{rem}\label{p-rem12.6}
We note that every complete algebraic surface defined 
over $\overline {\mathbb F}_p$ is always $\mathbb Q$-factorial 
(see \cite[Theorem 4.5]{tanaka}). 
So it automatically becomes projective (see 
\cite[Lemma 2.2]{fujino-surfaces}). 
By the following lemma (see Lemma \ref{p-lem12.7} below), the 
constructions in this section and Koll\'ar's construction in 
\cite[Aside 3.46]{kollar} 
can work for algebraic surfaces defined over an 
algebraically closed field $k$ such $k\ne  
\overline {\mathbb F}_p$ for every prime number $p$ 
with some suitable modifications. 
We note that Theorem \ref{f-thm3.11} 
holds true for algebraic surfaces 
defined over any algebraically closed field 
(see Remark \ref{f-rem11.4}). Thus, 
we can construct complete non-projective algebraic surfaces 
over an 
algebraically closed field $k$ such that 
$k \ne \overline {\mathbb F}_p$ 
for every prime number $p$. 
\end{rem}

\begin{lem}\label{p-lem12.7}
Let $C$ be a smooth projective 
curve defined over an algebraically closed field 
$k$ whose genus $g(C)$ is positive. 
Let $P$ be an arbitrary closed point of $C$. 
Assume that $k\ne \overline {\mathbb F}_p$ for every 
prime number $p$. 
Then there exists $Q\in C$ such that $\mathcal O_C(Q-P)$ is a non-torsion 
element of $\Pic^0(C)$. 
\end{lem}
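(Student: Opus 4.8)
The plan is to reduce the statement to the single assertion that the abelian group $\Pic^0(C)$ is \emph{not} a torsion group. Indeed, the classes $\mathcal O_C(Q-P)$, as $Q$ ranges over the closed points of $C$, generate $\Pic^0(C)$: any divisor of degree $0$ can be written as $\sum_i n_i Q_i$ with $\sum_i n_i=0$, and then $\sum_i n_i Q_i=\sum_i n_i(Q_i-P)$. Since $\Pic^0(C)$ is abelian, it is a torsion group if and only if all of these generators are torsion. Hence a non-torsion element of the form $\mathcal O_C(Q-P)$ exists precisely when $\Pic^0(C)$ fails to be torsion, and this is what I would establish.

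Writing $J$ for the Jacobian of $C$, so that $\Pic^0(C)=J(k)$ with $J$ an abelian variety of dimension $g(C)\ge 1$, the torsion subgroup $J_{\mathrm{tors}}=\bigcup_{n\ge 1}J[n](k)$ is a countable union of finite groups, hence countable. Therefore, if $k$ is uncountable, the positive-dimensional variety $J$ has uncountably many $k$-points and $J(k)\ne J_{\mathrm{tors}}$, so a non-torsion point exists immediately. It remains to treat the case that $k$ is countable, where this cardinality count is useless and the hypothesis $k\ne\overline{\mathbb F}_p$ must be used in an essential way.

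The cleanest countable case is $\mathrm{char}\,k=p$ with $C$ (equivalently $J$) already defined over $\ell:=\overline{\mathbb F}_p$. Here every torsion point of $J$ is defined over the algebraically closed field $\ell$, so $J_{\mathrm{tors}}=J(\ell)$; consequently, if $\mathcal O_C(Q-P)$ were torsion then, via the Abel--Jacobi embedding $C\hookrightarrow J$ defined over $\ell$, the point $Q$ would be forced to lie in $C(\ell)$. Since $k\supsetneq\ell$ and $C$ has positive dimension, there is a point $Q\in C(k)\setminus C(\ell)$, and for any such $Q$ the class $\mathcal O_C(Q-P)$ is non-torsion. This disposes of the isotrivial characteristic-$p$ situation and shows exactly why $k=\overline{\mathbb F}_p$ had to be excluded.

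The main obstacle is the remaining countable cases: $\mathrm{char}\,k=0$, and $\mathrm{char}\,k=p$ with $C$ not defined over $\overline{\mathbb F}_p$. Here neither cardinality nor the torsion-over-$\ell$ argument is available, and one must use the structural fact, going back to Frey--Jarden, that a positive-dimensional abelian variety over an algebraically closed field has all of its rational points torsion only when that field is $\overline{\mathbb F}_p$. For the constant case over $\overline{\mathbb Q}$ this amounts to $J$ acquiring a point of infinite order after a suitable finite extension, which follows from the Mordell--Weil theorem together with rank-growth results. Reducing the non-isotrivial and positive-transcendence-degree cases to this, by spreading $J$ out into a family over a base defined over the prime field and specializing, is where the argument is most delicate, and I expect it to be the principal technical hurdle.
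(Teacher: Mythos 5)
Your reduction is sound and is essentially the same reduction the paper makes: since the classes $\mathcal O_C(Q-P)$ generate the abelian group $\Pic ^0(C)$, the lemma is equivalent to the assertion that $\Pic ^0(C)$ is not a torsion group, i.e., that the Jacobian of $C$ has a $k$-point of infinite order. (The paper runs this in the opposite direction: it starts from a non-torsion $\mathcal L\in \Pic ^0(C)$, takes a general member $Q_1+\cdots +Q_m$ of the very ample system $|\mathcal L\otimes \mathcal O_C(mP)|$, writes $\mathcal L\simeq \mathcal O_C\left(\sum _i (Q_i-P)\right)$, and concludes that some $\mathcal O_C(Q_{i_0}-P)$ must be non-torsion; this is your generation argument in concrete form.)

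The gap is that you do not actually establish the key input in exactly the cases you yourself flag as hard. The cardinality count disposes of uncountable $k$, and the observation that all torsion of the Jacobian is $\overline{\mathbb F}_p$-rational disposes of curves defined over $\overline{\mathbb F}_p\subsetneq k$; but for countable fields of characteristic $0$, and for curves over countable fields of characteristic $p$ not descending to $\overline{\mathbb F}_p$, you only gesture at Frey--Jarden, Mordell--Weil with rank growth, and a spreading-out-and-specialization argument, and you explicitly defer this as the principal technical hurdle. As written, the statement is therefore not proved in those cases. The paper sidesteps this entirely by quoting the existence of a non-torsion element of $\Pic ^0(C)$ for $g(C)\geq 1$ and $k\neq \overline{\mathbb F}_p$ from \cite[Fact 2.3]{tanaka}. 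If you are willing to cite that fact (or the Frey--Jarden theorem) rather than reprove it, your argument closes up and coincides with the paper's; if not, you still owe a complete proof of the hardest and most essential step.
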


\begin{proof}
By $g(C)\geq 1$ and $k\ne \overline {\mathbb F}_p$, 
we can take a non-torsion element $\mathcal L$ of $\Pic^0(C)$ 
(see \cite[Fact 2.3]{tanaka}). 
We take a large positive integer $m$. 
Then $\mathcal L\otimes \mathcal O_C(mP)$ is very ample. 
We consider a general member $Q_1+\cdots +Q_m$ 
of $|\mathcal L\otimes \mathcal O_C(mP)|$. 
Then we have 
$\mathcal L\simeq \mathcal O_C\left((Q_1-P)+\cdots +(Q_m-P)\right)$. 
Therefore, there exists some $i_0$ such that 
$\mathcal O_C(Q_{i_0}-P)$ is a non-torsion 
element of $\Pic^0(C)$. 
\end{proof}


\end{document}